\newtheorem{theorem}{Theorem}[section] 
\newtheorem{lemma}[theorem]{Lemma}     
\newtheorem{corollary}[theorem]{Corollary}
\newtheorem*{theoremn}{Theorem \ref{duality}}
\newtheorem*{corollaryn}{Corollary \ref{affine theorem}}
\newtheorem{proposition}[theorem]{Proposition}
\theoremstyle{remark}
\newtheorem{remark}[theorem]{Remark}
\theoremstyle{definition}
\newtheorem{definition}[theorem]{Definition}
\newcommand{\cl}{\mathcal{C}\ell}
\newcommand{\la}{\langle}
\newcommand{\ra}{\rangle}
\newcommand{\ol}{\overline}
\newcommand{\e}{\mathbf{e}}
\newcommand{\bQ}{\mathbf{Q}}
\newcommand{\bF}{\mathbf{F}}
\newcommand{\eps}{\varepsilon}
\DeclareMathOperator{\coker}{coker}
\newcommand{\tensor}{\hspace{0.1em}\widehat{\otimes}\hspace{0.15em}}
\newcommand{\R}{\mathbb{R}}
\newcommand{\Z}{\mathbb{Z}}
\newcommand{\C}{\mathbb{C}}
\def\GL{{\rm GL}}
\def\O{{\rm O}}
\def\cE{{\mathcal E}}
\def\cF{{\mathcal F}}
\def\Ext{{\rm Ext}}
\def\PSU{{\rm PSU}}
\def\SO{{\rm SO}}
\def\SU{{\rm SU}}
\def\GL{{\rm GL}}
\def\U{{\rm U}}
\def\Hom{{\rm Hom}}
\def\cB{{\mathcal B}}
\def\cS{{\mathcal S}}
\def\cU{{\mathcal U}}
\def\cZ{{\mathcal Z}}
\def\bH{{\mathbf H}}
\def\bL{{\mathbf L}}
\def\bS{{\mathbf S}}
\def\bX{{\mathbf X}}
\def\fb{{\mathfrak b}}
\def\fa{{\mathfrak a}}
\def\fx{{\mathfrak x}}
\def\fy{{\mathfrak y}}
\def\ft{{\mathfrak t}}
\def\fu{{\mathfrak u}}
\def\fG{{\mathfrak G}}
\def\ft{{\mathfrak t}}
\def\Sp{{\rm Sp}}
\def\U{{\mathbf U}}
\title[Extended affine Weyl groups]{Extended Affine Weyl groups, the Baum-Connes correspondence and Langlands duality}
\author{Graham A. Niblo, Roger Plymen and Nick Wright}
\address{Mathematical Sciences, University of Southampton, SO17 1BJ,  England}
\email{g.a.niblo@soton.ac.uk, r.j.plymen@soton.ac.uk, wright@soton.ac.uk}
\begin{document}
\begin{abstract}  In this paper we consider the Baum-Connes correspondence for the affine and extended affine Weyl groups of a compact connected semisimple Lie group. We show that the Baum-Connes correspondence in this context arises from Langlands duality for the Lie group.
\end{abstract}

\maketitle

\tableofcontents

\section{Introduction}

Throughout this paper $G$ will denote a compact connected semisimple Lie group with maximal torus $T$, whose Lie algebra will be denoted $\ft$. We will examine the Baum-Connes correspondence in the context of affine and extended affine Weyl groups associated with $G$, which can be realised as groups of affine isometries of the Lie algebra $\ft$. The assembly map is an isomorphism in this context, cf.\ \cite{HK}. The domain of the assembly map for the extended affine Weyl group $W_a'$ is $KK^*_{W_a'}(C_0(\ft),\C)$ which we identify with $KK^*_{W}(C(T),\C)$ where $W$ is the Weyl group of $G$, see Remark \ref{LHS remark}.

The group $W_a'$ is the semidirect product $\Gamma\rtimes W$ where $\Gamma$ is the lattice of translations in $\ft$ corresponding to $\pi_1(T)$. The commutative algebra $C^*_r\Gamma$ is identified with $C(\widehat\Gamma)$ where $\widehat\Gamma$ denotes the Pontryagin dual of $\Gamma$. We identify the right hand side of the assembly map $KK^*(\C,C^*_rW_a')$ with the equivariant $KK$-group $KK_W(\C,C(\widehat\Gamma))$. Since $\widehat\Gamma$ is a torus of the same dimension as $T$ one might be tempted to think that the Baum-Connes correspondence is an isomorphism between the $W$-equivariant $K$-homology and $K$-theory of the torus $T$. While there is such an isomorphism (rationally) by the Universal Coefficients Theorem, this is not the Baum-Connes correspondence.

Although $\widehat\Gamma$ is a torus of the same dimension as $T$, there is in general no $W$-equivariant identification of the two tori. A very illustrative example is furnished by the Lie group $\SU_3$ whose extended affine Weyl group (which in this case is its affine Weyl group) is the $(3,3,3)$-triangle group acting on the plane. We find this an extremely useful way of visualising the Baum-Connes conjecture and examine it in detail in Section \ref{triangle section}.

We will see in this example that the left- and right-hand sides of the Baum-Connes correspondence look very different and the Baum-Connes isomorphism might almost appear coincidental in this case.  This `coincidence' however can be explained by $T$-duality between the tori $T$ and $\widehat \Gamma$. At the level of Lie groups $\widehat\Gamma$ is (equivariantly) the maximal torus $T^\vee$ of the Langlands dual $G^\vee$ of $G$. We construct a $W$-equivariant Poincar\'e duality in $K$-theory from $T$ to $T^\vee$, which can be viewed as providing a geometrical proof of the Baum-Connes correspondence in this context:

\begin{theorem}\label{the main theorem} Let $G$ be a compact connected semisimple Lie group, let $W'_a = W'_a(G)$ be the extended affine Weyl group attached to $G$.     Then we have the following commutative diagram:
$$\begin{CD}
KK^*_{W_a'}(C_0(\ft),\C) @>\mu>\phantom{\text{Poincar\'e-Langlands}}> KK^*(\C,C^*_rW_a')\\
@VV\cong V @A\cong A{\text{\begin{tabular}{c}Fourier-Pontryagin\\duality\end{tabular}}}A \\
KK^*_W(C(T),\C) @>\cong>{\text{\begin{tabular}{c}Poincar\'e-Langlands\\duality\end{tabular}}}> KK^*_W(\C,C(T^\vee))\\
\end{CD}$$
where $\mu$ is the Baum-Connes assembly map.
\end{theorem}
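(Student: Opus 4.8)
The plan is to realise all four sides of the square as pieces of a single Dirac--dual-Dirac package for the amenable group $W_a'=\Gamma\rtimes W$ acting properly on $\ft=\underline{E}W_a'$.

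\emph{The two verticals.} The left-hand isomorphism is Remark~\ref{LHS remark}: the normal subgroup $\Gamma\le W_a'$ acts freely and properly on $\ft$ with quotient $T$, so the imprimitivity equivalence $C_0(\ft)\rtimes_r\Gamma\sim_W C(T)$ identifies $KK^*_{W_a'}(C_0(\ft),\C)$ with $KK^*_W(C(T),\C)$. For the right-hand isomorphism one writes $C_r^*W_a'=C_r^*(\Gamma\rtimes W)=(C_r^*\Gamma)\rtimes W$; since $\Gamma$ is abelian, Fourier--Pontryagin duality gives $C_r^*\Gamma\cong C(\widehat\Gamma)$, the Green--Julg theorem for the finite group $W$ gives $KK^*(\C,(C_r^*\Gamma)\rtimes W)\cong KK^*_W(\C,C_r^*\Gamma)$, and $\widehat\Gamma$ is $W$-equivariantly the maximal torus $T^\vee$ of $G^\vee$: this is exactly the duality of lattices $\Gamma\leftrightarrow\Gamma^*=\{\lambda\in\ft^*:\langle\lambda,\Gamma\rangle\subseteq\Z\}$, with $W$ acting on $\ft^*$ contragrediently. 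All of these are degree-preserving isomorphisms.

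\emph{The bottom map.} Apply Kasparov's $W$-equivariant Poincar\'e duality to the compact manifold $T$, obtaining a degree-preserving isomorphism $KK^*_W(C(T),\C)\cong KK^*_W(\C,C_\tau(T))$, where $C_\tau(T)=C_0(T,\cl(T^*T))$ is the graded algebra of continuous sections of the complexified Clifford bundle of $T$. Now $T^*T$ is the trivial bundle $T\times\ft$ with $W$ acting diagonally, so $C_\tau(T)=C(T)\otimes\cl(\ft)$, and pulling back along $\ft\to T$ exhibits $C_\tau(T)$ as $W$-equivariantly Morita equivalent to $C_\tau(\ft)\rtimes_r\Gamma$, where $C_\tau(\ft)=C_0(\ft,\cl(T^*\ft))$ carries the $W_a'$-action in which $\Gamma$ translates $C_0(\ft)$ and acts trivially on the Clifford fibres. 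The crux is that $C_\tau(\ft)$ is $KK^{W_a'}$-equivalent to $\C$: this is the $W_a'$-equivariant Bott element, which exists because $W_a'$ is amenable and hence has $\gamma=1$ (cf.\ \cite{HK}). Applying $-\rtimes_r\Gamma$, which carries $KK^{W_a'}$ to $KK^W$ because $W_a'/\Gamma=W$, yields $C_\tau(T)\simeq_{KK^W}C_\tau(\ft)\rtimes_r\Gamma\simeq_{KK^W}\C\rtimes_r\Gamma=C_r^*\Gamma\cong C(\widehat\Gamma)=C(T^\vee)$. The composite $KK^*_W(C(T),\C)\cong KK^*_W(\C,C_\tau(T))\cong KK^*_W(\C,C(T^\vee))$ is the Poincar\'e--Langlands isomorphism. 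Geometrically it encodes that, although $T$ is not $W$-equivariantly $\Spin^c$ --- a reflection in $W$ acts on $T\times\ft$ with determinant $-1$, so $T$ is not even $W$-orientable --- the obstruction is cancelled by the Clifford bundle in $C_\tau(T)$ and then re-expressed on the Fourier--Pontryagin dual torus $T^\vee$; this is the $T$-duality between $T$ and $\widehat\Gamma$ mentioned in the introduction, implemented by the Poincar\'e line bundle on $T\times T^\vee$ associated with the $W$-invariant pairing $\Gamma\times\Gamma^*\to\Z$.

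\emph{Commutativity.} Finally one must check that this composite agrees with the assembly map $\mu$. In the Dirac--dual-Dirac picture, $\mu$ on the proper cocompact $W_a'$-space $\ft$ is $x\mapsto[c]\otimes_{C_0(\ft)\rtimes_r W_a'}j_{W_a'}(x)$, with $[c]$ the class of a cutoff projection; under $C_0(\ft)\rtimes_r W_a'\sim C(T)\rtimes_r W$ the cutoff corresponds to the unit of $KK^W_0(\C,C(T))$, and the descent $j_{W_a'}$ factors through Green--Julg applied to the $\Gamma$-equivariant assembly map carried out $W$-equivariantly. The latter is inverted by the same $W$-equivariant $\Gamma$-Bott element that underlies the Morita equivalence $C_\tau(T)\simeq C_r^*\Gamma$ above --- it is the restriction to $\Gamma$ of the $W_a'$-element, and Connes' Thom isomorphism identifies it with $(\mu^\Gamma)^{-1}$ --- so $\mu$ unwinds to Remark~\ref{LHS remark} followed by Kasparov--Poincar\'e duality, the equivalence $C_\tau(T)\simeq C_r^*\Gamma$, and Green--Julg, i.e.\ the composite of the two previous steps. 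I expect the main obstacle to be precisely this identification: checking that the cutoff goes to the unit, that descent is compatible with Green--Julg, and that the $W_a'$-Bott element restricts correctly, so that no spurious sign or automorphism of $C_r^*W_a'$ intervenes. These points can be pinned down on the $W$-fixed points of $T$ and tested against the explicit $\SU_3$ computation of Section~\ref{triangle section}.
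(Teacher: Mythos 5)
Your two vertical arrows coincide with the paper's (Theorem~\ref{LHS} together with Remark~\ref{LHS remark} for the left, Theorem~\ref{RHS} for the right), but your bottom arrow is built by a genuinely different route. You obtain the Poincar\'e--Langlands isomorphism abstractly: Kasparov's equivariant Poincar\'e duality gives $KK^*_W(C(T),\C)\cong KK^*_W(\C,C(T,\cl(\ft^*)))$, and you then supply a $KK^W$-equivalence
\[
C(T,\cl(\ft^*))\ \simeq\ C_0(\ft,\cl(\ft^*))\rtimes_r\Gamma\ \simeq\ \C\rtimes_r\Gamma\ \cong\ C(T^\vee),
\]
the crucial middle step being the $W_a'$-equivariant Bott periodicity $C_0(\ft,\cl(\ft^*))\simeq_{KK^{W_a'}}\C$, which you justify by amenability of $W_a'$, i.e.\ $\gamma=1$ \cite{HK}. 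The paper does something quite different: in Section~\ref{poincare section} it directly constructs explicit dual cycles $\fa\in KK_W(C(T^\vee)\tensor C(T),\C)$ (a Dirac-plus-harmonic-oscillator operator $Q_0$) and $\fb\in KK_W(\C,C(T)\tensor C(T^\vee))$ (a Poincar\'e-line-bundle-type Hilbert module completed from $C_c(\ft)$) and verifies both Kasparov product identities by hand. The authors in fact open Section~\ref{poincare section} by considering and declining your route, worried about the `dimension shift' carried by $\cl(\ft^*)$; your chain answers that worry by showing the Clifford factor is $KK^W$-absorbed into $C(T^\vee)$ itself, which is a nice observation. But there is a real trade-off: your bottom arrow feeds $\gamma=1$ for $W_a'$ into the Bott-periodicity step, so while your argument does establish the stated diagram, it cannot be read as an independent geometric proof of Baum--Connes for $W_a'$ --- and that is precisely the payoff the introduction advertises for constructing $\fa,\fb$ without reference to the assembly map. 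You are right that commutativity of the square is the substantive remaining point; your sketch via the cutoff projection and partial descent $-\rtimes_r\Gamma$ from $KK^{W_a'}$ to $KK^W$ is plausible but not carried to completion (nor, to be fair, is the paper's own proof of Theorem~\ref{the main theorem}, which merely lists Theorem~\ref{LHS}, Remark~\ref{LHS remark}, Theorem~\ref{RHS} and Corollary~\ref{PD} as the ingredients).
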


The duality between $G$ and $G^\vee$ is further amplified by the following theorem.
\newcommand{\dualthm}{Let $G$ be a compact connected semisimple Lie group and $G^\vee$ its Langlands dual. Let $W_a'(G)$, $W_a'(G^\vee)$ denote the extended affine Weyl groups of $G$ and $G^\vee$ respectively. Then  there is a rational isomorphism
$$K_*(C^*_r(W_a'(G)))\cong K_*(C^*_r(W_a'(G^\vee))).$$}
\begin{theorem}\label{duality}
\dualthm
\end{theorem}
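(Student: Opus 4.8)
The strategy is to reduce the statement to Theorem \ref{the main theorem} applied to both $G$ and $G^\vee$, and then to compare the two left-hand sides rationally. By Theorem \ref{the main theorem}, the assembly map identifies $K_*(C^*_r(W_a'(G)))$ with $KK^*_{W}(C(T),\C)$, the $W$-equivariant $K$-homology of $T$; likewise $K_*(C^*_r(W_a'(G^\vee)))$ is identified with $KK^*_{W}(C(T^\vee),\C)$, using that $G$ and $G^\vee$ share the same Weyl group $W$. So it suffices to produce a rational isomorphism
$$KK^*_W(C(T),\C)\otimes\Q \;\cong\; KK^*_W(C(T^\vee),\C)\otimes\Q.$$
Here one can either work with $K$-homology directly, or first apply the Poincar\'e--Langlands duality of Theorem \ref{the main theorem} on each side to restate the target as $KK^*_W(\C,C(T^\vee))\otimes\Q \cong KK^*_W(\C,C(T))\otimes\Q$, i.e.\ an isomorphism of $W$-equivariant $K$-theory groups of the two dual tori.

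The main tool for the comparison is that, rationally, $W$-equivariant $K$-theory of a $W$-space $X$ is computed by the Chern character / Baum--Connes-type decomposition
$$K^*_W(X)\otimes\Q \;\cong\; \Bigl(\bigoplus_{[w]} K^*(X^w)\otimes\Q\Bigr)^{W},$$
the sum running over conjugacy classes of $W$ and $X^w$ the fixed-point set of $w$, with $W$ acting by conjugation on the index and by translation on the fixed sets (the "delocalised" or Atiyah--Segal picture). Applying this to $X=T$ and to $X=T^\vee$, the problem becomes a term-by-term comparison of $K^*((T)^w)\otimes\Q$ with $K^*((T^\vee)^w)\otimes\Q$, equivariantly for the centraliser $Z_W(w)$. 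The key input is that for each $w\in W$ the fixed subtori $T^w\subseteq T$ and $(T^\vee)^w\subseteq T^\vee$ are Langlands-dual-like tori of the same dimension: on Lie algebras both are the $w$-fixed subspace $\ft^w$, and the relevant lattices ($\pi_1$ of one, character lattice of the other, intersected with $\ft^w$) are commensurable, so $T^w$ and $(T^\vee)^w$ become $Z_W(w)$-equivariantly isomorphic after tensoring the lattices with $\Q$. Hence each summand matches rationally, and assembling the $W$-invariants gives the desired isomorphism.

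I would carry this out in the following order: (1) record the identifications of both sides of the claimed isomorphism with equivariant $K$-homology (resp.\ $K$-theory) of $T$ and $T^\vee$ via Theorem \ref{the main theorem}; (2) state and apply the rational Chern-character decomposition into fixed-point contributions; (3) for each conjugacy class $[w]$, analyse the fixed tori $T^w$ and $(T^\vee)^w$, exhibit the rational lattice isomorphism over $\ft^w$ and check it is $Z_W(w)$-equivariant; (4) reassemble and take $W$-invariants, being careful that the matching intertwines the conjugation action on the indexing set. The main obstacle I expect is step (3): while the full lattices $\pi_1(T)$ and $X^*(T)$ (character lattice, $=\pi_1(T^\vee)$) are only commensurable in general — exactly the phenomenon that there is no $W$-equivariant isomorphism $T\cong T^\vee$, cf.\ the $\SU_3$ discussion — one must verify that after restriction to each fixed subspace $\ft^w$ the two lattices remain commensurable \emph{compatibly with the $Z_W(w)$-action}, so that the rational isomorphism of summands is genuinely equivariant and the invariants match; torsion contributions (e.g.\ components of fixed-point sets, or finite stabiliser subgroups) are exactly what is discarded on passing to $\Q$-coefficients, which is why the isomorphism is only rational.
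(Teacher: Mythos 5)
Your route is genuinely different from the paper's, and it leads into technical territory that the paper's argument deliberately avoids. The paper never has to compare $T$ with $T^\vee$ rationally at all: it uses the Pontryagin-duality identification of the right-hand side (Theorem \ref{RHS}) to write
$K_*(C^*_r(W_a'(G)))\cong K^*_W(T^\vee)$ and $K_*(C^*_r(W_a'(G^\vee)))\cong K^*_W(T)$, then applies the Poincar\'e duality of Corollary \ref{PD} \emph{once} to get $K^*_W(T^\vee)\cong K^W_*(T)$, and finally uses the universal coefficient theorem to compare $K^W_*(T)$ with $K^*_W(T)$ rationally. Thus the only rational step is UCT on a \emph{single} space, and no delocalised Chern character or fixed-point analysis appears anywhere.

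Your plan, by contrast, applies the Poincar\'e--Langlands duality either to both sides or to neither, so you end up having to show $K^*_W(T)\otimes\Q\cong K^*_W(T^\vee)\otimes\Q$ directly via the Atiyah--Segal decomposition. This is where I see a genuine gap. You assert that for each $w$ the fixed sets $T^w$ and $(T^\vee)^w$ can be matched rationally because the lattices restricted to $\ft^w$ are commensurable. But commensurable lattices do not give the same fixed-point \emph{sets}: the fixed sets are disjoint unions of translates of a subtorus, and the number of components (and the $Z_W(w)$-action permuting them) depends on the lattice, not just on $\ft^w\otimes\Q$. The paper itself flags the warning sign: for $\SU_3$, $T$ has three $W$-fixed points while $T^\vee$ has one, so at the level of individual elements and their fixed sets the two tori really do look different. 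To make your fixed-point comparison work you would have to show that the total $W$-invariant part of the direct sum over conjugacy classes matches, allowing for cancellations across conjugacy classes rather than a term-by-term bijection — a much more delicate statement than what you have written, and one you have not justified. The cleanest repair is simply to apply the Poincar\'e duality of Theorem \ref{the main theorem} to \emph{one} side only; then the remaining comparison is between the equivariant $K$-homology and equivariant $K$-theory of the \emph{same} torus, and that is exactly what the rational UCT gives you, with no fixed-point analysis required.
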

\medskip

\newcommand{\affinecor}{Let $W_a(G)$, $W_a(G^\vee)$ be the affine Weyl groups of $G,G^\vee$. If $G$ is of adjoint type then rationally
$$K_*(C^*_r(W_a'(G)))\cong K_*(C^*_r(W_a(G^\vee))).$$
If additionally $G$ is of type $A_n, D_n, E_6, E_7, E_8, F_4, G_2$ then rationally
$$K_*(C^*_r(W_a'(G)))\cong K_*(C^*_r(W_a(G))).$$}
\begin{corollary}\label{affine theorem}
\affinecor
\end{corollary}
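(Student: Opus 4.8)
The plan is to deduce Corollary~\ref{affine theorem} from Theorem~\ref{duality} by a careful analysis of the relationship between the extended affine Weyl group $W_a'$ and the affine Weyl group $W_a$. Recall that $W_a' = \Gamma \rtimes W$ where $\Gamma = \pi_1(T)$ is the coroot-type lattice, while $W_a = Q^\vee \rtimes W$ where $Q^\vee$ is the coroot lattice; the quotient $\Omega = W_a'/W_a \cong \Gamma/Q^\vee$ is the fundamental group of $G$, a finite abelian group. Langlands duality interchanges the roles of roots and coroots, so that $W_a'(G^\vee) = P^\vee(G^\vee)\rtimes W$ where the relevant lattice is the coweight lattice of $G^\vee$, i.e.\ the weight lattice $P$ of $G$; when $G$ is of adjoint type one has $\Gamma = P$, so that $W_a'(G^\vee) = P \rtimes W$. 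On the other hand, for $G$ adjoint the affine Weyl group $W_a(G^\vee)$ of the Langlands dual is built from the coroot lattice of $G^\vee$, which is the root lattice $Q$ of $G$; but $G^\vee$ is then of simply connected type, and its weight and root lattices coincide with... here one must track precisely which lattice appears.

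First I would set up the lattice bookkeeping: write down, for each of $G$, $G^\vee$, $W_a'(\cdot)$, $W_a(\cdot)$, the translation lattice explicitly in terms of $Q, Q^\vee, P, P^\vee$, and record the two short exact sequences $1 \to Q^\vee \to \Gamma \to \Omega \to 1$ (for $W_a \hookrightarrow W_a'$ of $G$) and its dual. Second, I would invoke Theorem~\ref{duality} to replace $K_*(C^*_r W_a'(G))$ by $K_*(C^*_r W_a'(G^\vee))$ rationally, and then observe that for $G$ of adjoint type the lattice defining $W_a'(G^\vee)$ equals the lattice defining $W_a(G^\vee)$ — this is the statement that, for a simply connected group, the coweight lattice and coroot lattice... no: rather, that the extended affine Weyl group of $G^\vee$ coincides with its affine Weyl group precisely when the relevant fundamental group vanishes, which happens exactly when $G$ is adjoint. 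This gives the first isomorphism. Third, for the second isomorphism I would use the classical fact (Bourbaki) that $W_a' = W_a \rtimes \Omega$ where $\Omega$ acts on $W_a$ through diagram automorphisms of the affine Dynkin diagram, and that $\Omega$ is trivial precisely for types $E_8, F_4, G_2$ and cyclic acting freely in the remaining listed types; combined with the first isomorphism applied to both $G$ and the adjoint form, a rational $K$-theory computation — using that $K_*(C^*_r(\Gamma\rtimes W)) \otimes \Q$ is computed by the $W$-equivariant cohomology of the dual torus, which is insensitive to passing between isogenous lattices rationally — yields $K_*(C^*_r W_a'(G)) \cong K_*(C^*_r W_a(G))$ rationally.

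The main obstacle I anticipate is the second isomorphism, and specifically justifying that rationally $K_*(C^*_r(W_a'(G)))$ does not depend on which lattice in the isogeny class one uses, only on the root system and the Weyl group action. A priori the lattices $\Gamma$ for different isogeny types give genuinely different (non-isomorphic) crossed products, and only after tensoring with $\Q$ should the dependence on the finite group $\Omega$ wash out; this requires an argument that the contribution of $\Omega$ to $K_*(C^*_r(W_a'))\otimes\Q$ is trivial, presumably via the decomposition of $C^*_r(W_a') = C(\widehat\Gamma)\rtimes W$ over the orbit space $\widehat\Gamma/W$ together with the observation that the finite isotropy contributions are rationally the same for isogenous tori because the fixed-point sets agree. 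The exclusion of types $B_n, C_n$ from the second statement — where $\Omega = \Z/2$ but does not act freely in the required sense, or where $G$ adjoint is not simply connected so the first isomorphism cannot be bootstrapped — is exactly the signal that this step is delicate, and I would treat those cases by direct inspection of the affine diagram automorphism to confirm where the argument breaks down.
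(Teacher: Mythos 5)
Your treatment of the first isomorphism is essentially the paper's: $G$ adjoint implies $G^\vee$ simply connected, so $W_a'(G^\vee)=W_a(G^\vee)$, and then Theorem~\ref{duality} gives the rational identification. This part is fine, modulo the lattice bookkeeping you flag as incomplete (which, in the event, is not needed at this level of detail).

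The second isomorphism is where you have a genuine gap, and your proposed route would not work. You reach for a semidirect decomposition $W_a'=W_a\rtimes\Omega$ and an argument that rational $K$-theory of $C(\widehat\Gamma)\rtimes W$ is ``insensitive to passing between isogenous lattices.'' Taken at face value this would apply equally to types $B_n$ and $C_n$, yet those are precisely the types excluded from the second statement, so either the claimed insensitivity is false or some hidden hypothesis is being used; your own closing paragraph acknowledges you cannot locate the obstruction. The actual content of the type restriction has nothing to do with freeness of an $\Omega$-action or rational isogeny-invariance. The listed types $A_n, D_n, E_6, E_7, E_8, F_4, G_2$ are exactly those whose root system is preserved by Langlands duality (i.e.\ duality does not interchange $B_n$ and $C_n$), and for such a type, the dual of the adjoint form $G$ is not merely \emph{some} simply connected group but is the universal cover $\widetilde G$ of $G$ itself. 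Once you see this, Lemma~\ref{cover} immediately gives $W_a(G)=W_a(\widetilde G)=W_a(G^\vee)$ as groups, and the second isomorphism follows from the first with no further $K$-theoretic input. Your approach overcomplicates the step and, more importantly, lacks the ingredient (self-duality of the root system forcing $G^\vee=\widetilde G$) that both closes the argument and explains exactly why $B_n$ and $C_n$ are excluded.
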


The paper is structured as follows. In Section \ref{triangle section} we consider in detail our motivating example of the $(3,3,3)$-triangle group which arises as the affine Weyl group of $\SU_3$.  In Section \ref{langlands section} we recall the definition of Langlands duality for compact semisimple Lie groups via their complexifications. In particular we identify the reduced $C^*$-algebra of the nodal group for $T$ as the algebra of functions on the maximal torus $T^\vee$ of the Langlands dual group. In the main section, Section \ref{poincare section}, we construct our Poincar\'e duality in $KK$-theory between the algebras $C(T)$ and $C(T^\vee)$.
We recall the definitions of affine and extended affine Weyl groups in Section \ref{affine Weyl section} in preparation to prove, in Section \ref{the end section}, the main theorems stated above.

We would like to thank Maarten Solleveld for his helpful comments on the first version of this paper.

\section{The example of $\SU_3$: the $(3,3,3)$-triangle group}\label{triangle section}

For a compact connected semisimple Lie group $G$ the Weyl group $W$ is a finite Coxeter group which acts linearly on the Lie algebra $\ft$ of a maximal torus $T$. The extended affine Weyl group $W_a'$ of $G$ is the semidirect product of a $W$-invariant lattice $\Gamma$ in $\ft$ by $W$. The affine Weyl group $W_a$ is the semidirect product of a $W$-invariant sublattice $N$ of $\Gamma$ by the group $W$, where the index of $N$ in $\Gamma$ is the order of $\pi_1(G)$. In particular for $\SU_3$, which is simply connected, $N=\Gamma$ and $W_a=W_a'$. This group is the $(3,3,3)$-triangle group. 

For the quotient $\PSU_3=\SU_3/C_3$ the lattice $N$ remains the same as for $\SU_3$ while the lattice $\Gamma$ now contains $N$ as an index $3$ sublattice. In this section we will reserve the notation $\Gamma$ for the $\Gamma$-lattice of $\PSU_3$.

We illustrate the content of Theorems \ref{the main theorem},\ref{duality} and Corollary \ref{affine theorem} for $\SU_3$ and $\PSU_3$ by considering in detail the group $C^*$-algebras of the corresponding affine and extended affine Weyl groups. After posting the initial version of this paper it was drawn to our attention by Maarten Solleveld that the K-theory groups appearing in this section were computed in his thesis, \cite{S}, using similar methods but with slightly different exact sequences of algebras. We refer the reader to the thesis for a number of other interesting examples.

A maximal torus $T$ for $\SU_3$ is given by the diagonal matrices with entries $\alpha,\beta,\gamma$ in $\U:=\{\lambda\in\C: |\lambda|=1\}$ such that $\alpha\beta\gamma=1$. The exponential map allows us to identify the universal cover of $T$ with the tangent plane
$$\ft=\{(x,y,z)\in\R^3 : x+y+z=0\}.$$
Explicitly we use the map
$$(x,y,z) \mapsto \begin{pmatrix}e^{2\pi ix}\\&e^{2\pi iy}\\&&e^{2\pi iz}\end{pmatrix}$$
and we identify the torus with the quotient of this plane by the group
$$N=\{(a,b,c)\in\Z^3 : a+b+c=0\}.$$
A compact fundamental domain for the action is given by the hexagon
$$X=\{(x,y,z)\in\ft : |x-y|,|y-z|,|z-x|\leq 1\}$$
with vertices $\pm(\frac 23,-\frac 13,-\frac 13),\pm(-\frac 13,\frac 23,-\frac 13),\pm(-\frac 13,-\frac 13,\frac 23)$. The torus is obtained from the hexagon by identifying opposite edges.

The Weyl group of $\SU_3$ is isomorphic to $D_3$ and can be identified with the group of matrices generated by
$$\begin{pmatrix}0&1&\\1&0&\\&&-1\end{pmatrix}\text{ and }\begin{pmatrix}-1&&\\&0&1\\&1&0\end{pmatrix}.$$
The conjugation action of $W$ on $T$ corresponds to the restriction to $\ft$ of the permutation representation of $D_3\cong S_3$ on $\R^3$. The group $W_a=W_a'$ for $\SU_3$ is the semidirect product $N\rtimes W$ which acts affinely on $\ft$. (The expert reader will note that, despite the notation $N$ for the lattice, we have formally constructed $W_a'$.)

Each transposition in $W=S_3$ gives a reflection of the plane fixing a pair of vertices of the hexagon $X$. The mirror lines thus divide $X$ into equilateral triangles, any of which is a fundamental domain for the action of $N\rtimes W$ on $\ft$. This allows us to identify $N\rtimes W$ with the $(3,3,3)$-triangle group,
$$\langle s_1,s_2,s_3 | s_i^2=1, (s_is_j)^3=1, i\neq j\rangle.$$
The generators $s_1,s_2$ generate the Weyl group $W$ while $s_3$ corresponds to a reflection in the third face of an equilateral triangle and is given by the composition of a linear reflection and a translation.

The action of $N\rtimes W$ on $\ft$ is a universal example for proper actions and hence the $N\rtimes W$-equivariant $K$-homology of $\ft$ is the left-hand-side of the Baum-Connes assembly map for this group.

The right-hand-side of the Baum-Connes assembly map is the $K$-theory of $C^*_r(N\rtimes W)\cong (C^*_r N)\rtimes_r W$. By Fourier-Pontryagin duality the $C^*$-algebra $C^*_r N$ is isomorphic to the algebra of continuous functions on the torus $\widehat{N}$. We will explicitly identify this torus and the action of the Weyl group $W$ on it in this example.

The dual of $\Z^3$ is naturally identified with the 3-torus $\U^3$. Restricting a character of $\Z^3$ to $N$ yields a character of $N$ and since the inclusion of $N$ into $\Z^3$ splits we obtain all characters of $N$ in this way. The dual of $N$ is therefore a quotient of the 3-torus. Given a triple $(\alpha,\beta,\gamma)\in \U^3$ the corresponding character on $N$ is given by $(a,b,c)\mapsto \alpha^a\beta^b\gamma^c$ and since $a+b+c=0$ for elements of $N$ two such triples yield the same character precisely when they are projectively equivalent. Thus $\widehat N$ is identified with the quotient of $\U^3$ by the diagonal action of $\U$.

The tangent space of $\widehat N$ is the quotient of $\R^3$ (strictly the dual of $\R^3$) by the diagonal action of $\R$. This is canonically the dual of $\ft$, however since $\ft$ provides a transversal to the diagonal action we can identify the tangent space $\ft^*$ of $\widehat N$ with $\ft$. The character space $\widehat N$ is thus identified as the quotient of $\ft$ by some lattice which we will denote by $\Gamma$. We will now identify $\Gamma$. For a triple $(x,y,z)\in \R^3$ let $\chi_{(x,y,z)}:N\to \U$ denote the character corresponding to $(e^{2\pi ix},e^{2\pi iy},e^{2\pi iz})$. For $(a,b,c)\in N$, that is $a,b,c$ integers with $a+b+c=0$, we have
$$\chi_{(x,y,z)}(a,b,c)=e^{2\pi i(xa+yb+zc)}=e^{2\pi i((x-z)a+(y-z)b)}.$$
The lattice $\Gamma$ consists of those triples $(x,y,z)\in \R^3$ such that $\chi_{(x,y,z)}$ gives the trivial character on $N$. Thus $\Gamma$ consists of triples $(x,y,z)$ with $x+y+z=0$ and $x=y=z$ modulo $\Z$. This is precisely the kernel of the exponential map from $\ft$ to the Lie group $\PSU_3$, justifying our choice of notation $\Gamma$.

We note that $\Gamma$ is the lattice generated by the vertices of $X$:
$$\textstyle \pm(\frac 23,-\frac 13,-\frac 13),\pm(-\frac 13,\frac 23,-\frac 13),\pm(-\frac 13,-\frac 13,\frac 23).$$
A fundamental domain for the action of $\Gamma$ on $\ft$ is given by the hexagon
$$X^\vee=\{(x,y,z)\in \ft : |x|,|y|,|z|\leq \frac 13\}.$$
This has vertices $\pm(\frac 13,-\frac13,0)$, $\pm(0,\frac 13,-\frac13)$, $\pm(-\frac 13,0,\frac13)$ and again the torus is obtained by identifying opposing edges.

We will now show that the action of $W$ on $\ft$ descends to the dual action of $W$ on $\widehat N=\ft/\Gamma$. The dual action is defined by $(w\cdot \chi)((a,b,c))=\chi(w^{-1}\cdot(a,b,c))$. In particular, for $\chi=\chi_{(x,y,z)}$ we have
$$(w\cdot \chi_{(x,y,z)})((a,b,c))=\chi_{(x,y,z)}(w^{-1}\cdot(a,b,c))=e^{2\pi i\la (x,y,z),w^{-1}\cdot(a,b,c)\ra}$$
where $\la -,-\ra$ denotes the standard inner product on $\R^3$. Since the action of $W$ on $\R^3$ is isometric we have
$$e^{2\pi i\la (x,y,z),w^{-1}\cdot(a,b,c)\ra}=e^{2\pi i\la w\cdot(x,y,z),(a,b,c)\ra}=\chi_{w\cdot(x,y,z)}((a,b,c))$$
so $w\cdot\chi{(x,y,z)}=\chi_{w\cdot (x,y,z)}$ as required.

\begin{figure}[h]
\includegraphics[scale=0.3]{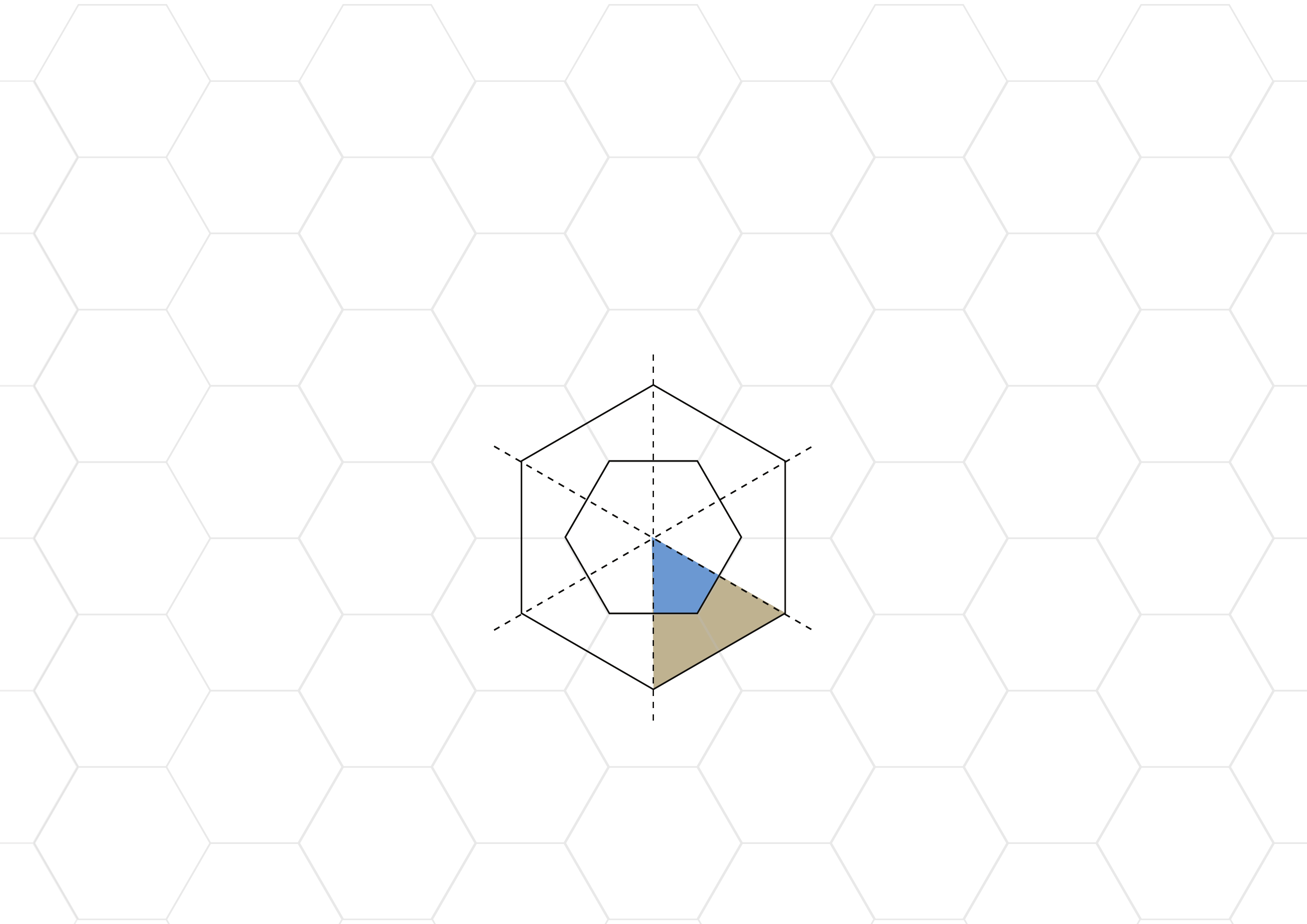}
\caption{The action of $W$ on the hexagons $X$ and $X^\vee$.}\label{fig:hexagons}
\end{figure}

We note that the reflection lines on the hexagon $X^\vee$ now bisect the edges of the hexagon, see Figure \ref{fig:hexagons}. Hence if for the moment we identify the two hexagons we have two different actions of $W$ on the hexagon corresponding to the two non-conjugate embeddings of $D_3$ in $D_6$.

\begin{figure}[h]
\includegraphics[scale=0.3]{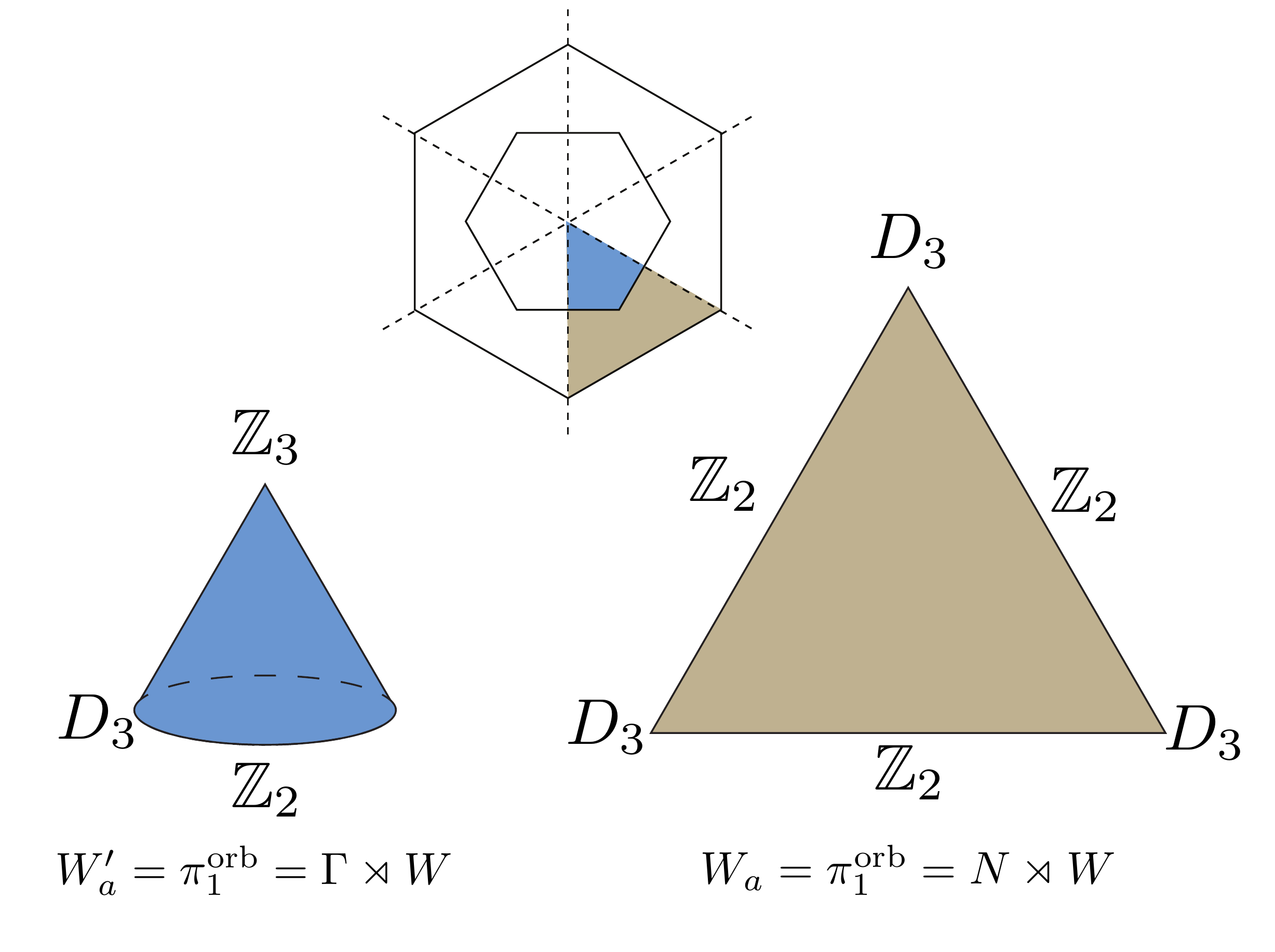}
\caption{The orbifold quotients of $\ft$ by $\Gamma\rtimes W$ and $N\rtimes W$.}\label{fig:orbifold}
\end{figure}

The orbifold quotients of the tori $T$ and $\widehat N$ by $W$ are illustrated in Figure \ref{fig:orbifold}. These may also be viewed as the orbifold quotients of $\ft$ by $N\rtimes W$ and $\Gamma\rtimes W$ respectively. Since $\Gamma$ is the kernel of the exponential map from $\ft$ to $\PSU_3$, the latter of these groups is the extended affine Weyl group of $\PSU_3$. We note that the quotient of $T$ by $W$ has three $W$-fixed points while the quotient of $\widehat N$ by $W$ has only one. On the other hand $\widehat N$ has $C_3$-isotropy at the cone point, while $T$ has no points with $C_3$-isotropy. In particular we see that there is no $W$-equivariant identification of the two tori.

The right-hand-side of the assembly map for $N\rtimes W$ is the $K$-theory of $C^*_r(N\rtimes W)$. The preceding discussion allows us to identify this as $C(\widehat N)\rtimes_r W\cong C(\ft/\Gamma)\rtimes_r W$. Hence the right-hand-side is closely related to the action of the group $\Gamma\rtimes W$ on the plane $\ft$, indeed the group $C^*$-algebra is Morita equivalent to $C_0(\ft)\rtimes_r(\Gamma\rtimes W)$. Given that the left-hand-side is determined by the action of a different group ($N\rtimes W$) on $\ft$ the appearance of $\Gamma\rtimes W$ in our description of the right-hand-side is unexpected. This illustrates and is explained by Theorem \ref{the main theorem}.

We will now proceed to compute explicitly the right-hand side of the assembly map for the triangle group. We will do so by identifying the algebra $C(\ft/\Gamma)\rtimes_r W$ as a subalgebra of the matrix algebra $M_6(C(\Delta))$ where $\Delta$ denotes the equilateral triangle with vertices
$(0,0,0),(\frac 13,\frac 13,-\frac 23), (\frac 23,-\frac 13,-\frac 13)$
(which is a fundamental domain for the action of $W$ on $X$).

We begin with the following lemma.

\begin{lemma} \label{crossed product lemma}
Let $W$ be a finite group and $A$ a $W$-$C^*$-algebra. Equip the algebra $\cB(\ell^2(W))$ with a $W$-action defined by $w\cdot S=\rho(w)S\rho(w)^*$ where $\rho$ is the right-regular representation, that is $\rho(w)\delta_x=\delta_{xw^{-1}}$. Then
$$A\rtimes_r W\simeq (A\otimes \cB(\ell^2(W)))^W$$
where $W$ acts diagonally on the tensor product.
\end{lemma}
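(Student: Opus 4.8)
The plan is to produce an explicit isomorphism rather than invoke an abstract Morita-type statement. Recall that $A \rtimes_r W$ is the completion of the algebraic crossed product, spanned by elements $a u_w$ with $a\in A$, $w\in W$, subject to $u_w a u_w^{-1} = w\cdot a$; since $W$ is finite the reduced and full crossed products coincide and this algebra acts on $\ell^2(W)\otimes H$ whenever $A$ acts on $H$. The first step is to write down a candidate map $\Phi\colon A\rtimes_r W \to (A\otimes\cB(\ell^2(W)))^W$ on these generators. The natural choice is
$$\Phi(a u_w) = \sum_{x\in W} (x^{-1}\cdot a)\otimes e_{x,xw},$$
where $e_{x,y}\in\cB(\ell^2(W))$ denotes the matrix unit $\delta_y\mapsto\delta_x$ (and $e_{x,y}\delta_z = \delta_{y,z}\delta_x$). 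One checks this lands in the fixed-point algebra: applying the diagonal action $g\cdot(\,-\,) = (g\cdot{})\otimes\rho(g)(\,-\,)\rho(g)^*$ and using $\rho(g)e_{x,y}\rho(g)^* = e_{xg^{-1},yg^{-1}}$ together with the reindexing $x\mapsto xg$ shows $\Phi(au_w)$ is $W$-invariant.

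The second step is to verify $\Phi$ is a $*$-homomorphism: that $\Phi(au_v)\Phi(bu_w) = \Phi(a(v\cdot b)u_{vw})$ and $\Phi(au_w)^* = \Phi((w^{-1}\cdot a^*)u_{w^{-1}})$. Both are direct matrix-unit computations — the product $e_{x,xv}e_{y,yw}$ is nonzero exactly when $y = xv$, forcing the surviving term to be $e_{x,xvw}$ with coefficient $(x^{-1}\cdot a)(x^{-1}\cdot(v\cdot b)) = x^{-1}\cdot(a(v\cdot b))$, which is precisely the $x$-th block of $\Phi(a(v\cdot b)u_{vw})$. The third step is to construct the inverse. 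Any element $S$ of $A\otimes\cB(\ell^2(W))$ decomposes as $S = \sum_{x,y} S_{x,y}\otimes e_{x,y}$ with $S_{x,y}\in A$; the $W$-invariance condition forces $g\cdot S_{xg,yg} = S_{x,y}$ for all $g$, so $S$ is determined by the single row $(S_{e,y})_{y\in W}$, and reparametrising $y = w$ we recover the coefficients $a_w$ of a crossed-product element via $S_{e,w} = a_w$. Thus $\Phi$ is a bijection between the dense $*$-subalgebras, an algebraic isomorphism. Finally, since a $*$-homomorphism of $C^*$-algebras that is an algebraic isomorphism of dense $*$-subalgebras has a unique extension which is a $C^*$-isomorphism (it is automatically isometric on the reduced crossed product, both sides being represented faithfully on $\ell^2(W)\otimes H$), we conclude $A\rtimes_r W \cong (A\otimes\cB(\ell^2(W)))^W$.

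I expect the only genuine subtlety to be bookkeeping: getting the left/right conventions for $\rho$ and the matrix units consistent so that the $W$-invariance and multiplicativity checks come out with matching indices. The statement uses $\simeq$, and since we produce an explicit $*$-isomorphism this is more than enough; if one only wanted Morita equivalence one could instead observe $(A\otimes\cB(\ell^2(W)))^W$ is a corner of $A\otimes\cB(\ell^2(W))\rtimes W \cong (A\rtimes W)\otimes\cB(\ell^2(W))$ cut by a full projection, but the direct isomorphism above is cleaner and is what the subsequent sections will want.
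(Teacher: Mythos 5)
Your approach is essentially the paper's: define an explicit map on the dense algebraic crossed product, check it is a $W$-invariant $*$-homomorphism, write down an inverse, and extend to the completions. The paper just splits $\Phi(au_w)$ into $\Phi(a)\Phi([w])$ rather than writing one closed formula. However, the formula you propose,
$$\Phi(au_w)=\sum_{x\in W}(x^{-1}\cdot a)\otimes e_{x,\,xw},$$
has the second index wrong, and for non-abelian $W$ the resulting map is neither multiplicative nor does it land in the fixed-point algebra. In your multiplicativity check, after imposing $y=xv$ the surviving coefficient is $(x^{-1}\cdot a)\bigl((xv)^{-1}\cdot b\bigr)=(x^{-1}\cdot a)\bigl((v^{-1}x^{-1})\cdot b\bigr)$, whereas the target $\Phi(a(v\cdot b)u_{vw})$ has coefficient $(x^{-1}\cdot a)\bigl((x^{-1}v)\cdot b\bigr)$; you have silently replaced $(xv)^{-1}$ by $x^{-1}v$, which is only valid when $W$ is abelian. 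Likewise, applying the diagonal $g$-action and reindexing $x\mapsto xg$ turns $e_{x,xw}$ into $e_{x,\,xgwg^{-1}}$, so invariance would force $gwg^{-1}=w$ for every $g$. The correct formula --- which is exactly what the paper's $\Phi(a)\Phi([w])=\bigl(\sum_x x^{-1}\cdot a\otimes p_x\bigr)(1\otimes\lambda(w))$ unpacks to, since $p_x\lambda(w)=e_{x,\,w^{-1}x}$ --- is
$$\Phi(au_w)=\sum_{x\in W}(x^{-1}\cdot a)\otimes e_{x,\,w^{-1}x}.$$
With this choice the inner indices in $e_{x,v^{-1}x}e_{y,w^{-1}y}$ match when $y=v^{-1}x$, producing $e_{x,(vw)^{-1}x}$ with coefficient $(x^{-1}\cdot a)\bigl((x^{-1}v)\cdot b\bigr)=x^{-1}\cdot\bigl(a(v\cdot b)\bigr)$, and the substitution $x\mapsto xg$ fixes $e_{x,w^{-1}x}$, so both checks go through. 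The remainder of your argument (reading off the inverse from the $e$-row --- now $S_{e,w^{-1}}=a_w$ rather than $S_{e,w}=a_w$ --- and extending to the $C^*$-completions) is sound once this index is corrected.
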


\begin{proof}
Define $\Phi: C(\ft/\Gamma)\rtimes_r W\to C(\ft/\Gamma)\otimes \cB(\ell^2(W))$ as follows. For $a\in A$ and $w\in W$ we define
\begin{align*}
\Phi(a)&=\sum_{w\in W} w^{-1}\cdot a\otimes p_w
\\
\Phi([w])&=1\otimes \lambda(w)
\end{align*}
where $\lambda$ denotes the left-regular representation and $p_w$ denotes the rank-one projection onto $\delta_w\in\ell^2(W)$. It is straightforward to verify that this extends to a $*$-homomorphism from $A\rtimes_r W$ to $A\otimes \cB(\ell^2(W))$. The image of $\Phi$ is then contained in the $W$-invariant part of $A\otimes \cB(\ell^2(W))$ where $W$ acts diagonally on the tensor product. To see this we note that for $u\in W$ we have
$$u\cdot(\Phi(a))=\sum_{w\in W}u\cdot (w^{-1}\cdot a) \otimes \rho(u)p_w\rho(u)^*=\sum_{w\in W}(wu^{-1})^{-1}\cdot a \otimes p_{wu^{-1}}=\Phi(a)$$
while it is clear that $\Phi([w])$ is $W$-invariant.

To see that the map $\Phi$ is an isomorphism onto the $W$-invariant part of $A\otimes \cB(\ell^2(W))$ we note that its inverse can be constructed as follows. Identifying $A\otimes p_e\cB(\ell^2(W))p_e$ with $A\otimes \C=A$ we have
$$\Phi^{-1}(a\otimes S)=\sum_{u\in W} (a\otimes p_e S \lambda(u)^* p_e)[u]$$
which is clearly a left-inverse to $\Phi$ and is also a right-inverse on $W$-invariant elements of $A\otimes \cB(\ell^2(W))$.
\end{proof}

\begin{figure}[h]
\includegraphics[scale=0.3]{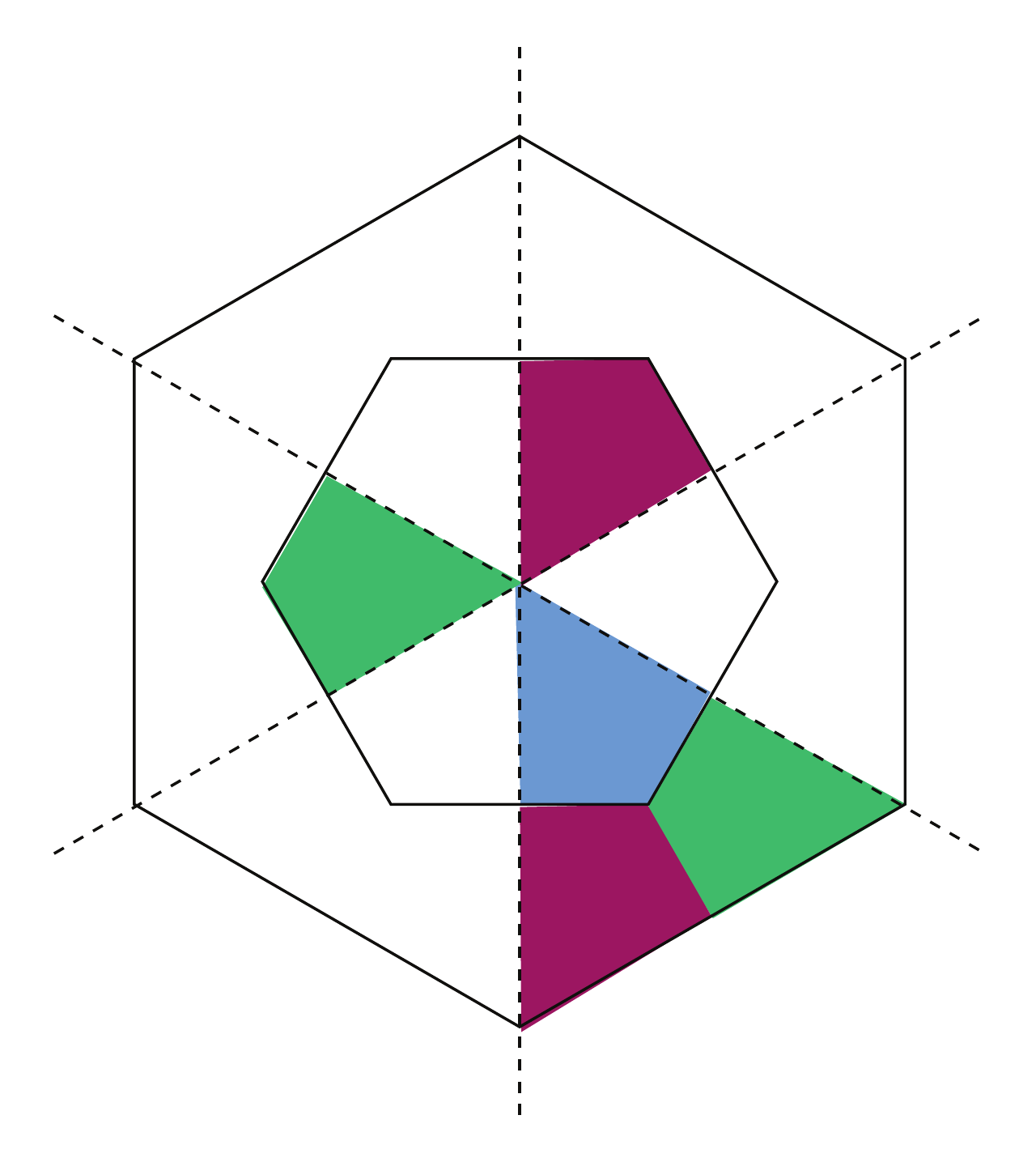}
\caption{$\Delta$ and the intersections of its translates with $X^\vee$.}\label{fig:radiation}
\end{figure}

From the lemma we have an isomorphism
$$\Phi:C(\ft/\Gamma)\rtimes_r W \to (C(\ft/\Gamma)\otimes \cB(\ell^2(W)))^W.$$
Now consider the map from $\Delta$ to the torus $\ft/\Gamma$. This induces a restriction map $\Psi$ from $(C(\ft/\Gamma)\otimes \cB(\ell^2(W)))^W$ to $C(\Delta)\otimes \cB(\ell^2(W))$ and since the action of $W$ on $\Delta$ covers $X^\vee$ (indeed it covers the whole of $X$) this restriction map is injective. We note that the image of $\Delta$ in the torus $\ft/\Gamma$ is preserved by the cyclic subgroup $C_3$ of $W$ generated by $s_1s_2$ (cf.\ Figure \ref{fig:radiation}). This action lifts to an action of $C_3$ on $\Delta$ and elements of the image of $\Psi$ are $C_3$-invariant, where $C_3$ acts diagonally on the tensor product $C(\Delta)\otimes \cB(\ell^2(W))$.

Moreover for a point on the boundary of $\Delta$ the image in $\ft/\Gamma$ is invariant under one or more of the reflections $s_1,s_2,s_1s_2s_1$ and hence the value of the function at that point is also invariant under that reflection. In particular at a vertex of $\Delta$ the value is a $W$-invariant operator on $\ell^2(W)$. This combined with the $C_3$ invariance implies automatically that the values at the three vertices agree, which indeed they must as the three vertices map to a single point in the torus.

To summarise, the composition $\Psi\circ\Phi$ gives an isomorphism from $C^*_r(N\rtimes W)$ to the subalgebra $E$ of $(C(\Delta)\otimes \cB(\ell^2(W)))^{C_3}$ of functions such that
\begin{itemize}
\item[(A)] the values at the vertices of the triangle are $W$-invariant operators on $\ell^2(W)$ and
\item[(B)] the values on an edge whose image in the torus is stabilised by $s\in \{s_1,s_2,s_1s_2s_1\}$ are $\la s\ra$-invariant operators on $\ell^2(W)$.
\end{itemize}

To compute the $K$-theory of $E$ we consider the ideal $I=(C_0(\Delta^\circ)\otimes \cB(\ell^2(W)))^{C_3}$ in $E$, where $\Delta^\circ$ denotes the interior of $\Delta$. (Note that the additional invariance conditions applied to $E$ relate to the boundary and hence are automatic for the ideal.) This fits into a short exact sequence
$$(C_0(\Delta^\circ)\otimes \cB(\ell^2(W)))^{C_3}\hookrightarrow (C(\Delta)\otimes \cB(\ell^2(W)))^{C_3}\twoheadrightarrow (C(\partial \Delta)\otimes \cB(\ell^2(W)))^{C_3}$$
where $\partial \Delta$ denote the boundary of the triangle. The $C_3$-equivariant contraction of the triangle onto its barycentre induces a homotopy equivalence from the middle term to $\cB(\ell^2(W))^{C_3}$. This in turn is Morita equivalent to $\C[C_3]$ hence its $K$-theory is $\Z^3$ in dimension $0$ and zero in dimension $1$.

\begin{remark}\label{triangle mod C_3}
The quotient term can be identified with $\cB(\ell^2(W))$-valued functions on a single edge of the triangle such that the values at the ends differ by the action of $s_1s_2\in C_3$. Specifically these operators are conjugate by $\rho(s_1s_2)$.  Since this operator is homotopic to the identity the quotient algebra is isomorphic to $C(\partial \Delta/C_3)\otimes \cB(\ell^2(W))$ and hence has the $K$-theory of a circle, i.e. $K_0=K_1=\Z$.
\end{remark}
We thus obtain the following $6$-term exact sequence.

\begin{equation}
\label{6-term for I}\begin{CD}
K_0(I)@>>> \Z^3@>>> \Z\\
@AAA @. @VVV\\
\Z @<<< 0 @<<< K_1(I)
\end{CD}
\end{equation}

The map $\Z^3\to \Z$ is the map induced on $K$-theory from the composition
\begin{align*}
\cB(\ell^2(W))^{C_3}\to& (C(\Delta)\otimes \cB(\ell^2(W)))^{C_3}\\
&\quad\to (C(\partial \Delta)\otimes \cB(\ell^2(W)))^{C_3}\to C(\partial \Delta/C_3)\otimes \cB(\ell^2(W))
\end{align*}
where the first map is inclusion as constant functions, the second is restriction and the third is the identification given in Remark \ref{triangle mod C_3}. Since the conjugation by $\rho(s_1s_2)$ fixes elements of $\cB(\ell^2(W))^{C_3}$ the overall composition is given by the inclusion of $\cB(\ell^2(W))^{C_3}$ as constant $C_3$-invariant functions in $C(\partial \Delta/C_3)\otimes \cB(\ell^2(W))$.

Taking the generators of $K_0(\C(C_3))$ to be the rank one projections corresponding to the characters of $C_3$, these each map to constant rank-$1$-projection-valued functions. Hence each generator of $K_0(\C(C_3))=\Z^3$ maps to the generator of $K_0(C(\partial \Delta/C_3)\otimes \cB(\ell^2(W)))=\Z$. We thus see that $K_1(I)$ is zero (since $\Z^3$ surjects onto $\Z$) while $K_0(I)=\Z\oplus\Z^2$. We will see that it is the cokernel $\Z^2$ of the connecting map $\Z\to K_0(I)$ that contributes to $K_0(E)$.

We now move on to the $K$-theory of the quotient $E/I$. This quotient is the subalgebra of $(C(\partial \Delta)\otimes \cB(\ell^2(W)))^{C_3}$ consisting of functions whose value at each vertex is $W$-invariant and whose value along each edge is $\la s\ra $ invariant where $s$ denotes the reflection in the edge. Again this can be identified with certain $\cB(\ell^2(W))$-valued functions on a single edge of the triangle.  Since the $C_3$-action on the triangle takes one vertex of the edge to the other, the operator appearing at the second vertex is obtain by conjugating the operator at the first vertex by $\rho(s_1s_2)$. However since the operators at each vertex are invariant under $\rho(W)$-conjugation, this conjugation is trivial and so the two operators agree.

Thus $E/I$ is identified as $\cB(\ell^2(W))^{\la s\ra}$-valued functions on $[0,1]$ whose values at the endpoints agree and are $W$-invariant operators. We therefore have a short exact sequence
$$0\to C_0(0,1)\otimes \cB(\ell^2(W))^{\la s\ra} \to E/I \to \cB(\ell^2(W))^W\to 0.$$
This short exact sequence is split, by lifting $\cB(\ell^2(W))^W$ to constant functions on $[0,1]$ hence
\begin{align*}
K_0(E/I)&=K_0(C_0(0,1)\otimes \cB(\ell^2(W))^{\la s\ra})\oplus K_0(\cB(\ell^2(W))^W)=0\oplus K_0(\C[W])=\Z^3\\
K_1(E/I)&=K_1(C_0(0,1)\otimes \cB(\ell^2(W))^{\la s\ra})\oplus K_1(\cB(\ell^2(W))^W)=K_0(\C[\la s\ra])\oplus 0=\Z^2
\end{align*}

The short exact sequence
$$0\to I \to E \to E/I\to 0$$
now yields the following $6$-term exact sequence.
$$\begin{CD}
K_0(I)=\Z^3@>>> K_0(E)@>>> K_0(E/I)=\Z^3\\
@AAA @. @VVV\\
K_1(E/I)=\Z^2 @<<< K_1(E) @<<< K_1(I)=0
\end{CD}$$
The connecting map $K_1(E/I)\to K_0(I)$ factors through the connecting map $K_1((C(\partial \Delta)\otimes \cB(\ell^2(W)))^{C_3})\to K_0(I)$ from Equation \ref{6-term for I} which we saw is an injection.  The map from $E/I$ to $C(\partial \Delta)\otimes \cB(\ell^2(W)))^{C_3}$ is the inclusion obtained by forgetting the $\la s\ra$ invariance on the interval and $W$-invariance at the vertices. The map on $K_1$ is identified with the map $K_0(\cB(\ell^2(W))^{\la s\ra })=\Z^2\to K_0(\cB(\ell^2(W)))=\Z$. This is a surjection, hence the kernel of the connecting map $K_1(E/I)\to K_0(I)$ is $\Z$ while the image agrees with the image of the connecting map from Equation \ref{6-term for I} and hence the cokernels also agree.

We conclude that
$$K_1(E)=\Z$$
while $K_0(E)$ is the direct sum of the cokernel of $K_1(E/I)\to K_0(I)$ with $K_0(E/I)=\Z^3$. The cokernel is $\Z^2$ and hence we obtain
$$K_0(E)=\Z^2\oplus \Z^3=\Z^5.$$
This completes the calculation of the $K$-theory of group $C^*$-algebra of the $(3,3,3)$-triangle group $N\rtimes W$.

We have thus established the following.

\begin{theorem}\label{triangle group}
Let $N\rtimes W$ denote the affine Weyl group of $\SU_3$  (the $(3,3,3)$-triangle group). Note that this is also the affine Weyl group of $\PSU_3$. Then
$$K_0(C^*_r(N\rtimes W))=\Z^5,\quad K_1(C^*_r(N\rtimes W))=\Z.$$
\end{theorem}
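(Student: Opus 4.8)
The plan is to prove Theorem \ref{triangle group} by the explicit computation developed above, which I now lay out as a coherent strategy. First, via Fourier–Pontryagin duality one has $C^*_r(N\rtimes W)\cong C(\widehat N)\rtimes_r W\cong C(\ft/\Gamma)\rtimes_r W$, and Lemma \ref{crossed product lemma} rewrites this as the fixed-point algebra $(C(\ft/\Gamma)\otimes\cB(\ell^2(W)))^W$. Restricting functions to the equilateral triangle $\Delta\subset X$ — a fundamental domain for the $W$-action on $X$ whose image in the torus $\ft/\Gamma$ is preserved by $C_3=\la s_1s_2\ra$ — produces an injection onto the subalgebra $E$ of $(C(\Delta)\otimes\cB(\ell^2(W)))^{C_3}$ cut out by the boundary conditions (A) and (B): the operator value at each vertex of $\Delta$ is $\rho(W)$-invariant, and along each edge whose image in $\ft/\Gamma$ is fixed by a reflection $s\in\{s_1,s_2,s_1s_2s_1\}$ the value is $\la s\ra$-invariant. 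Checking that this restriction is injective with image exactly $E$ — that passing to $\Delta$ loses no information beyond (A) and (B) — is the first point requiring care; it rests on tracking which translates of $\Delta$ meet $X^\vee$ and the stabilisers of boundary points (cf.\ Figure \ref{fig:radiation}).

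Next I would filter $E$ by the ideal $I=(C_0(\Delta^\circ)\otimes\cB(\ell^2(W)))^{C_3}$, the quotient $E/I$ encoding the boundary behaviour. To compute $K_*(I)$, use the short exact sequence obtained by restricting to $\partial\Delta$: the $C_3$-equivariant contraction of $\Delta$ to its barycentre identifies the middle term, up to homotopy, with $\cB(\ell^2(W))^{C_3}$, Morita equivalent to $\C[C_3]$ and so with $K_0=\Z^3$, $K_1=0$, while by Remark \ref{triangle mod C_3} the boundary term is homotopy equivalent to $C(\partial\Delta/C_3)\otimes\cB(\ell^2(W))$, i.e.\ a circle algebra. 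In the resulting six-term sequence, Equation \ref{6-term for I}, the map $\Z^3\to\Z$ sends each character-projection generating $K_0(\C[C_3])$ to the generator of $K_0$ of the circle algebra, hence is onto; this forces $K_1(I)=0$ and $K_0(I)=\Z^3$, the summand relevant downstream being the rank-two cokernel of the boundary connecting map $\Z\to K_0(I)$.

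For the quotient, $E/I$ is identified with $\cB(\ell^2(W))^{\la s\ra}$-valued functions on $[0,1]$ whose endpoint values agree and are $\rho(W)$-invariant — the potential twist by $\rho(s_1s_2)$ being trivial on $W$-invariant operators — giving a split short exact sequence $0\to C_0(0,1)\otimes\cB(\ell^2(W))^{\la s\ra}\to E/I\to\cB(\ell^2(W))^W\to 0$, whence $K_0(E/I)=K_0(\C[W])=\Z^3$ and $K_1(E/I)=K_0(\C[\la s\ra])=\Z^2$. Finally I would feed $0\to I\to E\to E/I\to 0$ into its six-term sequence. The crucial step, and the one I expect to be the main obstacle, is to pin down the connecting map $K_1(E/I)\to K_0(I)$: one shows it factors through the (injective) boundary connecting map of Equation \ref{6-term for I} via the forgetful inclusion $E/I\hookrightarrow(C(\partial\Delta)\otimes\cB(\ell^2(W)))^{C_3}$, and that on $K_1$ this inclusion realises the surjection $K_0(\cB(\ell^2(W))^{\la s\ra})=\Z^2\to K_0(\cB(\ell^2(W)))=\Z$. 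Hence the connecting map has kernel $\Z$ and cokernel $\Z^2$; exactness then yields $K_1(E)=\Z$ together with $0\to\Z^2\to K_0(E)\to K_0(E/I)=\Z^3\to 0$, which splits to give $K_0(E)=\Z^5$. Since $\Psi\circ\Phi$ is an isomorphism $C^*_r(N\rtimes W)\xrightarrow{\ \cong\ }E$, and $N\rtimes W$ is also the affine Weyl group of $\PSU_3$, this is exactly the assertion.
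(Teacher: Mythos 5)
Your proposal reproduces the paper's proof essentially step for step: the same chain $C^*_r(N\rtimes W)\cong C(\ft/\Gamma)\rtimes_r W\cong (C(\ft/\Gamma)\otimes\cB(\ell^2(W)))^W$ via Lemma \ref{crossed product lemma}, the same identification with the subalgebra $E\subset(C(\Delta)\otimes\cB(\ell^2(W)))^{C_3}$ cut out by (A) and (B), the same filtration by $I=(C_0(\Delta^\circ)\otimes\cB(\ell^2(W)))^{C_3}$, and the same analysis of the two six-term sequences, including the key step of factoring the connecting map $K_1(E/I)\to K_0(I)$ through the injective boundary map of Equation \ref{6-term for I} via the forgetful inclusion. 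The conclusion $K_0(E)=\Z^5$, $K_1(E)=\Z$ is reached by the same bookkeeping.
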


To illustrate Corollary \ref{affine theorem} we now also consider the $K$-theory of $C^*_r(\Gamma\rtimes W)$. We note that $\Gamma$ is the image of the lattice $\Z^3$ under the map
$$(a,b,c)\mapsto (a,b,c)-\frac {a+b+c}3(1,1,1).$$
The kernel is the diagonal copy of $\Z$ in $\Z^3$ and hence we can identify $\Gamma$ with $\Z^3/\Z$.

The characters on $\Gamma$ thus correspond precisely to characters on $\Z^3$ which are trivial on the diagonal copy of $\Z$. These are given by triples $(\alpha, \beta, \gamma)$ such that $\alpha\beta\gamma=1$, hence the dual $\widehat \Gamma$ of $\Gamma$ is identified with the above maximal torus $T$ for $\SU_3$. The action of $W$ on $\Gamma$ corresponds to the permutation representation on $\Z^3$ which in turn corresponds to the original action of $W$ on $T$.  We conclude that $C^*_r(\Gamma\rtimes W)\cong (C^*_r\Gamma)\rtimes_r W\cong C(T)\rtimes_r W$.

By Lemma \ref{crossed product lemma} we can now identify $C(T)\rtimes_r W$ as the $W$-invariant algebra $(C(T)\otimes \cB(\ell^2(W))^W$ where $W$ acts diagonally on the two factors. The action on the second factor is as before: $W$ acts by conjugation by the right-regular regresentation $\rho$. The triangle $\Delta$ injects into $T$ and we will now regard it as a subspace of $T$.  This gives a fundamental domain for the action of $W$ on $T$, hence the restriction map $(C(T)\otimes \cB(\ell^2(W)))^W$ to $C(\Delta)\otimes \cB(\ell^2(W))$ is injective. We denote the image by $F$. It is important to note that while the triangle $\Delta$ is the same as in the previous calculation, the triangle no longer has an action of $C_3$ on it, indeed for $w\neq e$ in $W$, the action of $w$ on $T$ takes the interior $\Delta^\circ$ of $\Delta$ entirely off itself.

The $W$-invariance condition means that if a point $x\in\Delta$ has stabiliser $H\leq W$ then the value at $x$ is an $H$-invariant operator. As noted above, the interior of $\Delta$ has trivial stabiliser. The three edges of $\Delta$ have stabilisers $\la s_1\ra,\la s_2\ra, \la s_1s_2s_1\ra$ while the vertices are stabilised by $W$. Thus we have identified $C(T)\rtimes_r W$ with the subalgebra of $C(\Delta)\otimes \cB(\ell^2(W))$ satisfying (A),(B) as above. The algebra $E$ considered previously is precisely the $C_3$-invariant subalgebra of $F$. Let $J=C_0(\Delta^\circ)\otimes \cB(\ell^2(W))$. We have $K_0(J)=\Z$ and $K_1(J)=0$. We must now determine the $K$-theory of the quotient $F/J$.

The algebra $F/J$ consists of functions on the boundary $\partial \Delta$ which are $W$-invariant at the vertices and invariant under $\la s_1\ra$, $\la s_2\ra$ and $\la s_1s_2s_1\ra$ respectively on the three edges.  Conjugating the values by $e,s_2s_1$ and $s_1s_2$ on the three edges we obtain functions which are $\la s_1\ra$ invariant on each edge.  The $W$-invariance at the vertices allows us to conjugate by different values on different edges since the vertex values are unchanged by conjugation. We consider the ideal $L$ in this algebra consisting of functions vanishing at the three vertices.  This ideal can be identified as the direct sum
$$C_0(I_1)\otimes \cB(\ell^2(W))^{\la s_1\ra}\,\,\oplus\,\, C_0(I_2)\otimes \cB(\ell^2(W))^{\la s_1\ra}\,\,\oplus\,\, C_0(I_3)\otimes \cB(\ell^2(W))^{\la s_1\ra}$$
$$\cong (C_0(I_1)\,\oplus\, C_0(I_2)\,\oplus\, C_0(I_3))\,\otimes\, \cB(\ell^2(W))^{\la s_1\ra}$$
where $I_1,I_2,I_3$ denote the interiors of the edges of the triangle. Its $K$-theory is thus $0$ in dimension $0$ and $\Z^2\oplus\Z^2\oplus \Z^2$ in dimension $1$.

The quotient by this ideal is the sum of three copies of $\C[W]$, one for each vertex. Its $K$-theory is thus $\Z^3\oplus\Z^3\oplus\Z^3$ in dimension $0$, and $0$ in dimension $1$. We obtain the following 6-term exact sequence.
$$\begin{CD}
0@>>> K_0(F/J)@>>> \Z^3\oplus\Z^3\oplus\Z^3\\
@AAA @. @VV\partial V\\
0@<<< K_1(F/J)@<<< \Z^2\oplus\Z^2\oplus \Z^2
\end{CD}$$

We remark that the algebra is contained in $C(\partial\Delta)\otimes \cB(\ell^2(W))^{\la s_1\ra}$ for which there is a corresponding $6$-term exact sequence arising from the ideal $L$. The $K$-theory of $C(\partial\Delta)\otimes \cB(\ell^2(W))^{\la s_1\ra}$ is $\Z^2$ in both dimensions and the $6$-term sequence is
$$\begin{CD}
0@>>> \Z^2@>>> \Z^2\oplus\Z^2\oplus\Z^2\\
@AAA @. @VV\partial' V\\
0@<<< \Z^2@<<< \Z^2\oplus\Z^2\oplus \Z^2
\end{CD}$$
The connecting map $\partial$ is now given by the following composition
$$\Z^3\oplus\Z^3\oplus\Z^3\xrightarrow{\iota\oplus\iota\oplus\iota} \Z^2\oplus\Z^2\oplus\Z^2 \xrightarrow{\partial'}\Z^2\oplus\Z^2\oplus\Z^2.$$
Here $\iota$ is the map on $K$-theory induced by the inclusion of $\C[W]=\cB(\ell^2(W))^{W}$ into $\cB(\ell^2(W))^{\la s_1\ra}$. The latter is identified with $M_3(\C[\la s_1\ra])$, indeed enumerating $W$ as $e,s_1,s_1s_2,s_1s_2s_1,s_2s_1,s_2$ the (right) $s_1$-invariant operators are matrices of the form
$$\left(\begin{array}{c|c|c}
M_{11}&M_{12}&M_{13}\\
\hline
M_{21}&M_{22}&M_{23}\\
\hline
M_{31}&M_{32}&M_{33}\\
\end{array}\right)$$
with each $M_{ij}$ a $2\times 2$-matrix in $\C[\la s_1\ra]$. This allows us to explicitly identify the map $\iota$ as follows. The generators of $K_0(\C[W])$ are given by three projections $p_t,p_s,p_d$, where $p_t$ maps to a rank one projection under the trivial representation and vanishes under the sign and dihedral representations, and similarly for $p_s,p_d$. Explicitly
$$p_t=\frac 16\sum_{w\in W} [w], \quad p_s=\frac 16\sum_{w\in W}\mathrm{sign}(w)[w],\quad p_d=\frac 12([e]+[s_1])-p_t.$$
Letting $q_t,q_s$ denote the projections in $\C[\la s_1\ra]$ corresponding to the trivial and sign representations of $\la s_1\ra$ we have
$$p_t=\frac 13\left(\begin{array}{c|c|c}
q_t&q_t&q_t\\
\hline
q_t&q_t&q_t\\
\hline
q_t&q_t&q_t\\
\end{array}\right),\quad p_s=\frac 13\left(\begin{array}{c|c|c}
q_s&q_s&q_s\\
\hline
q_s&q_s&q_s\\
\hline
q_s&q_s&q_s\\
\end{array}\right),$$
$$
p_d+p_t=\frac 12([e]+[s_1])=\frac 12\left(\begin{array}{c|c|c}
2q_t&&\\
\hline
&q_t+q_s&q_t-q_s\\
\hline
&q_t-q_s&q_t+q_s\\
\end{array}\right).
$$
Hence at the level of $K$-theory the map is given by $[p_t]\mapsto [q_t],[p_s]\mapsto [q_s]$ and $[p_d]+[p_t]\mapsto 2[q_t]+[q_s]$. Hence $[p_d]\mapsto [q_t]+[q_s]$.

Since $\iota$ is surjective, the image, and hence also the cokernel, of $\partial$ is the same as for $\partial'$.  Hence we see that $K_1(F/J)=\Z^2$. The kernel of $\partial'$ is the image of $\Z^2$ in $\Z^2\oplus\Z^2\oplus\Z^2$ under the diagonal inclusion. The kernel of $\partial$ is thus the preimage of this diagonal $\Z^2$ under $\iota\oplus\iota\oplus\iota$. Since $[p_t]+[p_s]-[p_d]$ is in the kernel of $\iota$, this element of $K_0(\C[W])$ gives three elements of the kernel of $\partial$, one at each vertex. We have two additional generators corresponding to the elements of the diagonal $\Z^2$. Explicitly we can take these to be $[p_t \oplus p_t \oplus p_t]$ and $[p_s \oplus p_s \oplus p_s]$. Hence we conclude that $K_0(F/J)=\Z^5$.

To complete the calculation of the $K$-theory of $F$ we now consider the $6$-term exact sequence arising from the short exact sequence
$$0\to J\to F\to F/J\to 0$$
This gives the following.
$$
\begin{CD}
K_0(J)=\Z @>>> K_0(F) @>>>K_0(F/J)=\Z^5\\
@AAA@.@VVV\\
K_1(F/J)=\Z^2@<<<K_1(F) @<<< K_1(J)=0
\end{CD}
$$
It remains to determine the connecting map from $K_1(F/J)$ to $K_0(J)$. We saw that the forgetful map $F/J\to C(\partial \Delta)\otimes \cB(\ell^2(W))^{\la s_1\ra}$ induced an isomorphism on $K$-theory. We observe that the further inclusion into $C(\partial \Delta)\otimes \cB(\ell^2(W))$ gives the summation map $\Z^2\to \Z$ on $K_1$ (and also on $K_0$). The connecting map $K_1(F/J)\to K_0(J)$ factors through the connecting map for the exact sequence
$$0\to J \to C(\Delta)\otimes \cB(\ell^2(W)) \to C(\partial \Delta)\otimes \cB(\ell^2(W))\to 0$$which is an isomorphism $K_1(C(\partial \Delta)\otimes \cB(\ell^2(W)))=\Z\cong K_0(J)=\Z$. Thus the map from $K_1(F/J)$ to $K_0(J)$ is again the summation map from $\Z^2$ to $\Z$.

We thus conclude that $K_1(F)\cong \Z$ and $K_0(F)\cong K_0(F/J)\cong \Z^5$ establishing the following theorem.

\begin{theorem}\label{cone group}
Let $\Gamma\rtimes W$ denote the extended affine Weyl group of $\PSU_3$. Then
$$K_0(C^*_r(\Gamma\rtimes W))=\Z^5,\quad K_1(C^*_r(\Gamma\rtimes W))=\Z.$$
\end{theorem}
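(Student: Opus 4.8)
The plan is to assemble the ingredients developed over the course of this section. First I would record the identification $C^*_r(\Gamma\rtimes W)\cong(C^*_r\Gamma)\rtimes_r W\cong C(T)\rtimes_r W$, using that $\Gamma\cong\Z^3/\Z$ has Pontryagin dual the maximal torus $T$ of $\SU_3$, with $W$ acting by the permutation representation. Lemma \ref{crossed product lemma} then presents this algebra as the fixed subalgebra $(C(T)\otimes\cB(\ell^2(W)))^W$, and restriction to the fundamental triangle $\Delta\subset T$ identifies it with the subalgebra $F\subset C(\Delta)\otimes\cB(\ell^2(W))$ cut out by conditions (A) and (B). The key structural difference from the $\SU_3$ computation is that $\Delta$ carries no residual $C_3$-symmetry inside $T$, so here $F\supset E$ rather than $F=E$, and the relevant ideal is $J=C_0(\Delta^\circ)\otimes\cB(\ell^2(W))$ with $K_0(J)=\Z$, $K_1(J)=0$.

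The bulk of the work is the computation of $K_*(F/J)$, the boundary algebra. For this I would pass to the sub-ideal $L\subset F/J$ of functions vanishing at the three vertices; after conjugating the three edges by suitable elements of $W$ this becomes $(C_0(I_1)\oplus C_0(I_2)\oplus C_0(I_3))\otimes\cB(\ell^2(W))^{\la s_1\ra}$, with $K_0=0$ and $K_1=\Z^2\oplus\Z^2\oplus\Z^2$, while $(F/J)/L$ is three copies of $\C[W]$, with $K_0=\Z^9$ and $K_1=0$. The connecting map $\Z^9\to\Z^6$ factors as $\partial'\circ(\iota\oplus\iota\oplus\iota)$, where $\partial'$ is the corresponding connecting map for the ambient algebra $C(\partial\Delta)\otimes\cB(\ell^2(W))^{\la s_1\ra}$ and $\iota\colon K_0(\C[W])\to K_0(\cB(\ell^2(W))^{\la s_1\ra})$ is induced by inclusion. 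Using $\cB(\ell^2(W))^{\la s_1\ra}\cong M_3(\C[\la s_1\ra])$ and the explicit projections $p_t,p_s,p_d$ one computes $\iota([p_t])=[q_t]$, $\iota([p_s])=[q_s]$, $\iota([p_d])=[q_t]+[q_s]$, so $\iota$ is surjective; hence $\partial$ and $\partial'$ have the same image, which gives $K_1(F/J)=\Z^2$, and a short kernel computation (the element $[p_t]+[p_s]-[p_d]$ lies in $\ker\iota$ and contributes three classes, one at each vertex, together with the two diagonal classes) gives $K_0(F/J)=\Z^5$.

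Finally I would feed $K_*(F/J)$ and $K_*(J)$ into the six-term sequence for $0\to J\to F\to F/J\to 0$. The one remaining input is the connecting map $K_1(F/J)=\Z^2\to K_0(J)=\Z$; this factors through the connecting isomorphism $K_1(C(\partial\Delta)\otimes\cB(\ell^2(W)))\xrightarrow{\cong}K_0(J)$ precomposed with the summation map $\Z^2\to\Z$ arising from $\cB(\ell^2(W))^{\la s_1\ra}\hookrightarrow\cB(\ell^2(W))$, so it is the surjection $\Z^2\to\Z$. Exactness then forces $K_0(F)\cong K_0(F/J)=\Z^5$ and $K_1(F)\cong\ker(\Z^2\to\Z)=\Z$, which is the claim.

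I expect the main obstacle to be the explicit determination of $\iota$ on $K$-theory: one must fix a concrete enumeration of $W$, write out the $6\times 6$ matrices representing the generating projections of $\C[W]$ in their $\la s_1\ra$-invariant block form, and read off their classes in $K_0(M_3(\C[\la s_1\ra]))=\Z^2$. Everything else reduces to bookkeeping with the relevant six-term sequences, once one records the various homotopy equivalences that trivialise the conjugation operators appearing along edges (each being conjugation by an element homotopic to the identity in the unitary group).
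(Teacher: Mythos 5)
Your proposal follows the paper's proof essentially step for step: the same identification of $C^*_r(\Gamma\rtimes W)$ with $F\subset C(\Delta)\otimes\cB(\ell^2(W))$, the same ideal $J$ and sub-ideal $L\subset F/J$, the same explicit computation of $\iota$ via the block decomposition $\cB(\ell^2(W))^{\la s_1\ra}\cong M_3(\C[\la s_1\ra])$ showing $[p_t]\mapsto[q_t]$, $[p_s]\mapsto[q_s]$, $[p_d]\mapsto[q_t]+[q_s]$, and the same final six-term argument with the summation connecting map $\Z^2\to\Z$. The approach and conclusion are correct and match the paper.
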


\begin{remark}
The observation that the $K$-theory groups appearing in Theorems \ref{triangle group} and \ref{cone group} are the same is an illustration of Corollary \ref{affine theorem} for the Lie group $\PSU_3$. Indeed in this example we see that the affine Weyl group and extended affine Weyl group have $K$-theory which is integrally isomorphic, not just rationally isomorphic.
\end{remark}

\section{Langlands duality}   \label{langlands section}

As discussed in the introduction one of the key motivations of this paper is that for extended affine Weyl groups, the Baum-Connes correspondence should be thought of as an equivariant duality between the tori $T$ and $\widehat{\Gamma}$. In this section we will recall the definition of Langlands duality for complex Lie groups and how this can be used to construct Langlands duality in the real case.  We will show that the aforementioned duality of the tori corresponds to this real Langlands duality for the Lie group.

\subsection{Complex reductive groups} Let $\bH$ be a connected complex reductive group, with maximal torus $\bS$.   This determines a root datum
\[
R(\bH,\bS): = (\bX^*(\bS), R, \bX_*(\bS), R^\vee)
\]

Here $R$ and $R^\vee$ are the sets of roots and coroots of $\bH$, while 
\begin{align}
\bX^*(\bS): = \Hom(\bS, \C^\times) \quad \textbf{and} \quad  \bX_*(\bS) : = \Hom(\C^\times,\bS)
\end{align}
 are its character and co-character lattices.

The root datum implicitly includes the pairing $\bX^*(\bS) \times \bX_*(\bS)  \to \Z$ and the bijection $R \to R^\vee, \; \alpha \mapsto h_\alpha$ between roots and coroots. Root data classify complex reductive Lie groups, in the sense that two such groups are isomorphic if and only if their root data are isomorphic (in the obvious sense).

Interchanging the roles of roots and coroots and of the character and co-character lattices results in a new root datum:
\[
R(\bH, \bS)^\vee : = (\bX_*(\bS), R^\vee, \bX^*(\bS), R)
\]

The Langlands dual group of $\bH$ is the complex reductive group $\bH^\vee$ (unique up to isomorphism) determined by the dual root datum 
$R(\bH, \bS )^\vee$.
A root datum also implies a choice of maximal torus $\bS \subset \bH$ via the canonical isomorphism $\bS \simeq \Hom(\bX^*(\bS), \C^\times)$,
and likewise a natural choice of maximal torus for the Langlands dual group $\bH^\vee:  \bS^\vee := \Hom(\bX_*(\bS), \C^\times) \subset \bH^\vee$.

In particular, we have the equation
\begin{align}\label{dualtorus}
\bX^*(\bS^\vee) = \bX_*(\bS)
\end{align}

\subsection{Compact semisimple groups}\label{compact semisimple groups}
 Let now $G$ be a compact connected semisimple Lie group, with maximal torus $T$.   We recall that a compact connected Lie group is semisimple 
if and only if it has finite centre \cite[p.285]{B}.   The classical examples are the compact real forms
 \[
 \SU_n, \,\SO_{2n + 1},\, \Sp_{2n},\, \SO_{2n},\, E_6,\, E_7,\, E_8,\, F_4,\, G_2.
 \]

For a Lie group $G$, the \emph{complexification} $G_\C$ is a complex Lie group together with a morphism from $G$, satisfying the universal property that for any morphism of $G$ into a complex Lie group $\bL$ there is a unique factorisation through $G_\C$.

The complexification $\bS: = T_\C$ of $T$ is a maximal torus in $\bH: = G_\C$, and so the dual torus $\bS^\vee$ is well-defined in 
the dual group $\bH^\vee$.   Then $T^\vee$ is defined to be the maximal compact subgroup of $\bS^\vee$, and satisfies the condition
\[
(T^\vee)_\C =  \bS^\vee.
\]
By definition, the torus $T^\vee$ is the $T$-dual of $T$.

We denote by $X^*(T)$ the group of morphisms from the Lie group $T$ to the Lie group $\U = \{z \in \C : |z| = 1\}$, and denote by $X_*(T)$ the group of morphisms from $\U$ to $T$.  Corresponding to (\ref{dualtorus}), we have the $T$-duality equation
\begin{align}\label{dualtorus2}
X^*(T^\vee) = X_*(T)
\end{align}
which follows from the identification of the lattices $X^*(T^\vee), X_*(T)$ with their complex counterparts $\bX^*(\bS^\vee),$$\bX_*(\bS)$.  Moreover since $\bS^\vee=\Hom(\bX_*(\bS), \C^\times)$  the maximal torus is $T^\vee=\Hom(\bX_*(\bS), \U)=\Hom(X_*(T), \U)$.

\begin{definition}
 The \emph{Langlands dual} of $G$, denoted $G^\vee$, is defined to be a maximal compact subgroup of $\bH^\vee$ containing the torus $T^\vee$.
\end{definition}

The process of passing to a maximal compact subgroup is inverse to complexification in the sense that complexifying $G^\vee$ recovers $\bH^\vee$.

\medskip
\subsubsection{A table of Langlands dual groups}  Given a compact connected semisimple Lie group $G$, the product $|\pi_1(G)| \cdot |\cZ(G)|$ is unchanged by Langlands duality, i.e.\ it agrees with the product $|\pi_1(G^\vee)| \cdot |\cZ(G^\vee)|$. This product is equal to the \emph{connection index}, denoted $f$, (see \cite[IX, p.320]{B}), which is defined in \cite[VI, p.240]{B}. The connection indices are listed in \cite[VI, Plates I--X,
p.265--292]{B}.

The following is a table of Langlands duals and connection indices for compact connected semisimple groups: 

\begin{center}
\begin{tabular}{l||l|l}
$G$ & $G^{\vee}$ & $f$\\
\hline
$A_n = \SU_{n + 1}$ & $\PSU_{n + 1}$  & $n+1$\\
$B_n = \SO_{2n+1}$ & $\Sp_{2n}$ & $2$\\
$C_n = \Sp_{2n}$ & $\SO_{2n+1}$ & $2$\\
$D_n = \SO_{2n}$ & $\SO_{2n}$ &$4$\\
$E_6$ & $E_6$ & $3$\\
$E_7$ & $E_7$ & $2$\\
$E_8$ & $E_8$ &$1$\\
$F_4$ & $F_4$ & $1$\\
$G_2$ & $G_2$ & $1$\\
\end{tabular}
\end{center}

In this table, 
the simply-connected form of $E_6$ (resp. $E_7$) dualises to the adjoint form of $E_6$ (resp. $E_7$).

The Lie group $G$ and its dual $G^\vee$ admit a common Weyl group 
\[
W = N(T)/T = N(T^\vee)/T^\vee.
\]
The $T$-duality equation (\ref{dualtorus2}) identifies the action of the Weyl group of $T$ on $X_*(T)$ with the dual action of the Weyl group of $T^\vee$ on $X^*(T^\vee)$.

Note that, in general,  $T$ and $T^\vee$ are \emph{not} isomorphic as $W$-spaces.  For example, let $G = \SU_3$.  Then $G^\vee = \PSU_3$ and we have
\[
T = \{(z_1, z_2, z_3) : z_j \in \U, z_1 z_2 z_3 = 1\}
\]
as in the Section \ref{triangle section} and
\[ 
T^\vee = \{(z_1: z_2: z_3) : z_j \in \U, z_1 z_2 z_3 = 1 \}
\]
the latter being in homogeneous coordinates.  We remark that $T^\vee$ can be identified with the torus $\widehat N$ from the previous section, indeed $T^\vee$ is the Pontryagin dual of $X_*(T)$ which in the case of $\SU_3$ is the group $N$ since $\SU_3$ is simply connected. The map 
\[
T \to T^\vee, \quad (z_1, z_2, z_3) \mapsto (z_1 : z_2 : z_3)
\]
 is a $3$-fold cover: the
pre-image of $(z_1 : z_2 : z_3)$ is the set \[
\{(\eta z_1, \eta z_2, \eta z_3) : \eta \in \U, \eta^3 = 1\}.
\]

The torus $T$ admits three $W$-fixed points,
namely
\[
\{(1,1,1), (\omega, \omega, \omega), (\omega^2, \omega^2, \omega^2) : \omega = \exp(2 \pi i/3)\}
\]
whereas the unique $W$-fixed point in $T^\vee$ is the identity $(1:1:1) \in T^\vee$, hence $T$ and $T^\vee$ are not equivariantly isomorphic.

 \subsection{The nodal group}   The \emph{nodal group of $T$} is defined to be
\[
\Gamma(T): = \ker (\exp: \ft \to T)
\]
where $\ft$ denotes the Lie algebra of $T$. Differentiating the action of $W$ on $T$ via automorphisms we obtain a linear action of $W$ on $\ft$. The map $\exp$ is $W$-equivariant and hence the action of $W$ restricts to an action on $\Gamma(T)$.

\begin{lemma}\label{gamma}
There is a $W$-equivariant isomorphism
\[
X_*(T) \simeq \Gamma(T)
\]
\end{lemma}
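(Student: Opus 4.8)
The plan is to exhibit an explicit natural map $X_*(T)\to\Gamma(T)$ and check that it is an isomorphism of $W$-sets. Recall that $X_*(T)=\Hom(\U,T)$. Given a cocharacter $\lambda\colon\U\to T$, differentiate at $1\in\U$ to obtain a Lie algebra map $d\lambda\colon\R=\mathrm{Lie}(\U)\to\ft$, and set $\iota(\lambda):=d\lambda(2\pi i)\in\ft$ (identifying $\mathrm{Lie}(\U)$ with $i\R$ so that the generator $2\pi i$ of the kernel of $\exp\colon i\R\to\U$ is the natural normalisation). First I would verify that $\iota(\lambda)$ lies in $\Gamma(T)=\ker(\exp\colon\ft\to T)$: this is immediate from the commutative square relating $\exp$ for $\U$ and for $T$ via $\lambda$, since $2\pi i$ maps to $1\in\U$. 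Additivity of $\iota$ is clear because differentiation is linear and $T$ is abelian, so $\iota$ is a group homomorphism $X_*(T)\to\Gamma(T)$.

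Next I would show $\iota$ is bijective. For injectivity, note that $\exp\colon\ft\to T$ is a covering map, so a cocharacter $\lambda$ is determined by $d\lambda$, hence by $\iota(\lambda)$. For surjectivity, given $v\in\Gamma(T)$, define $\lambda_v\colon\U\to T$ by $\lambda_v(e^{2\pi i\theta})=\exp(\theta v)$; this is well-defined precisely because $v\in\Gamma(T)$ (so the ambiguity $\theta\mapsto\theta+1$ does not change the value), it is a continuous homomorphism, and $\iota(\lambda_v)=v$. Thus $\iota$ is an isomorphism of abstract groups.

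Finally I would check $W$-equivariance. The Weyl group acts on $X_*(T)$ by $(w\cdot\lambda)(z)=w\cdot(\lambda(z))$, where $w\cdot(-)$ on the right is the action on $T$ by the automorphism $\Ad$ of a representative in $N(T)$; and it acts on $\Gamma(T)\subset\ft$ by the differentiated (linear) action, which as noted in the text makes $\exp$ $W$-equivariant. Differentiating the relation $(w\cdot\lambda)(z)=w\cdot\lambda(z)$ at $z=1$ and using the chain rule gives $d(w\cdot\lambda)=(dw)\circ d\lambda$, hence $\iota(w\cdot\lambda)=w\cdot\iota(\lambda)$, which is exactly $W$-equivariance.

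The argument is essentially routine; the only point requiring care is fixing the normalisation of $\mathrm{Lie}(\U)$ and the corresponding generator so that $\iota$ genuinely lands in $\Gamma(T)$ rather than in a rescaled lattice, and making sure the $W$-action used on $X_*(T)$ (post-composition with the torus automorphism) is the one that differentiates to the linear action on $\ft$ — once that bookkeeping is in place the naturality of differentiation does all the work, so I do not anticipate a genuine obstacle.
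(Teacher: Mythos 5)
Your proposal is correct and takes essentially the same route as the paper: differentiate a cocharacter and evaluate at the generator $2\pi i$ of $\Gamma(\U)$, construct the inverse by exponentiating a line through $\gamma\in\Gamma(T)$, and obtain $W$-equivariance from the chain rule applied to the defining relation $(w\cdot\lambda)(z)=w\cdot\lambda(z)$. The only cosmetic difference is that you argue injectivity separately via the covering property of $\exp$, while the paper simply exhibits the two-sided inverse.
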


\begin{proof}   The group $X_*(T)$ is the group  of morphisms from the Lie group $\U$ to the Lie group $T$.
Given $f \in X_*(T)$, we have a commutative diagram
$$\begin{CD}
\Gamma(\U) @>>> \fu=i\R @>\exp>>\U\\
@VV{\Gamma(f)}V @VV{df}V @VV{f}V\\
\Gamma(T) @>>> \ft @>\exp>>T
\end{CD}$$
where $\Gamma(f)$ is given by restricting $df$ to $\Gamma(\U)$.
   We identify $\Gamma(\U)$ with the subgroup $2\pi i \Z$ of $\fu = i\R$.   The homomorphism $\Gamma(f)$ is determined by its value on the generator $2\pi i$ and we define a homomorphism
   \[
   X_*(T) \to \Gamma(T), \quad \quad f \mapsto \Gamma(f)(2\pi i).
   \]
Conversely, given $\gamma\in\Gamma(T)$ there is a unique linear map from $\fu = i\R$ to $\ft$ taking $2\pi i\mapsto \gamma$. Composing with the exponential map we obtain a map from $\fu$ to $T$, and since $\gamma$ is in the kernel of the exponential map this descends to a morphism from $\U$ to $T$. This gives a homomorphism from $\Gamma(T)$ to $X_*(T)$ which inverts the above homomorphism as in \cite[p.307]{B}.

The isomorphism is equivariant since for $f\in X_*(T)$ we have
$$\Gamma(w\cdot f)(2\pi i)=d(w\cdot f)(2\pi i)=(w\cdot df)(2\pi i)=w\cdot (\Gamma(f)(2\pi i)).$$
\end{proof}

We now observe that the groups $\Gamma(T)$ and $T^{\vee}$ are \emph{in duality} in the sense of locally compact abelian topological groups.

\begin{lemma}\label{gammadual}  Let $\widehat{\Gamma}$ denote the Pontryagin dual of $\Gamma=\Gamma(T)$.   Then we have a $W$-equivariant isomorphism 
\[
\widehat{\Gamma} \simeq T^\vee.
\]
and hence an isomorphism of $W$\!-\,$C^*$\!-algebras
\[
C^*_r(\Gamma) \simeq C(T^\vee).
\]
\end{lemma}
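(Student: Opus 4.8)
The plan is to combine the previous Lemma \ref{gamma} with the standard duality between a torus and its cocharacter lattice. First I would recall that for the compact torus $T^\vee$ we have a canonical identification $T^\vee = \Hom(X_*(T), \U)$, which was already noted in the discussion following equation (\ref{dualtorus2}): since $\bS^\vee = \Hom(\bX_*(\bS), \C^\times)$, passing to the maximal compact subgroup gives $T^\vee = \Hom(\bX_*(\bS),\U) = \Hom(X_*(T),\U)$. On the other hand, the Pontryagin dual $\widehat{\Gamma}$ of the finitely generated free abelian group $\Gamma = \Gamma(T)$ is by definition $\Hom(\Gamma, \U)$.

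The key step is then to apply the contravariant functor $\Hom(-,\U)$ to the $W$-equivariant isomorphism $X_*(T) \simeq \Gamma(T)$ supplied by Lemma \ref{gamma}. This yields an isomorphism of compact abelian groups
\[
\widehat{\Gamma} = \Hom(\Gamma(T),\U) \;\simeq\; \Hom(X_*(T),\U) = T^\vee.
\]
To check $W$-equivariance I would unwind the actions: $W$ acts on $\Gamma(T)$ (and on $X_*(T)$) as explained before the statement of Lemma \ref{gamma}, and it acts on the Hom-groups by the dual (contragredient) action, $(w\cdot\varphi)(\gamma) = \varphi(w^{-1}\cdot\gamma)$; the isomorphism of Lemma \ref{gamma} intertwines the $W$-actions on the source groups, so applying $\Hom(-,\U)$ intertwines the dual actions, which are precisely the $W$-actions on $\widehat\Gamma$ and on $T^\vee$ respectively. (The latter agrees with the action of $N(T^\vee)/T^\vee$ on $T^\vee$ via the identification $W = N(T)/T = N(T^\vee)/T^\vee$ recorded earlier, using that equation (\ref{dualtorus2}) identifies the $W$-action on $X_*(T)$ with the dual of the $W$-action on $X^*(T^\vee)$.)

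Finally, to obtain the isomorphism of $W$-$C^*$-algebras $C^*_r(\Gamma)\simeq C(T^\vee)$, I would invoke Fourier–Pontryagin duality: for a discrete abelian group $\Gamma$ the reduced group $C^*$-algebra $C^*_r(\Gamma)$ is canonically isomorphic to $C(\widehat\Gamma)$, and this isomorphism is natural in $\Gamma$, hence carries the $W$-action on $C^*_r(\Gamma)$ induced by the $W$-action on $\Gamma$ to the $W$-action on $C(\widehat\Gamma)$ induced by the dual action on $\widehat\Gamma$. Composing with the homeomorphism $\widehat\Gamma \simeq T^\vee$ just established gives the desired $W$-equivariant $*$-isomorphism $C^*_r(\Gamma)\simeq C(T^\vee)$. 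I do not expect any serious obstacle here; the only point requiring care is bookkeeping of the various $W$-actions and checking that the identification $T^\vee = \Hom(X_*(T),\U)$ is the one compatible with the Weyl group action coming from $N(T^\vee)/T^\vee$, which is exactly what equation (\ref{dualtorus2}) and the surrounding discussion provide.
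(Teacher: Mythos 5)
Your proposal is correct and follows essentially the same chain of reasoning as the paper: both rest on the $W$-equivariant isomorphism $\Gamma(T)\simeq X_*(T)$ from Lemma \ref{gamma} together with the $T$-duality identity $X^*(T^\vee)=X_*(T)$ of equation (\ref{dualtorus2}), plus Fourier--Pontryagin duality for the $C^*$-algebra statement. The only cosmetic difference is that the paper identifies $\Gamma(T)\simeq X^*(T^\vee)=\widehat{T^\vee}$ and then applies Pontryagin double duality, whereas you apply $\Hom(-,\U)$ directly to $\Gamma(T)\simeq X_*(T)$ using the characterisation $T^\vee=\Hom(X_*(T),\U)$; these are two presentations of the same argument.
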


\begin{proof} By Lemma \ref{gamma} the nodal group $\Gamma(T)$ is $W$-equivariantly isomorphic to $X_*(T)$. By the $T$-duality equation (\ref{dualtorus2}) the latter is equal to
$$X^*(T^\vee)=\Hom(T^\vee,\U).$$
This by definition is the Pontryagin dual $\widehat{T^\vee}$.
The result now follows by Pontryagin duality.  \end{proof}

\section{Equivariant Poincar\'e Duality between $C(T)$ and $C(T^\vee)$}\label{poincare section}

Let $G$ be a compact connected semisimple Lie group with maximal torus $T$.  Let $T^\vee$ be the dual torus as in Section \ref{compact semisimple groups} and let $W$ denote the Weyl group of $G$. In this section we will establish the isomorphism from $KK^*_W(C(T),\C)$ to $KK^*_W(\C,C(T^\vee))$.  We will do this by exhibiting a $W$-equivariant Poincar\'e duality between the algebras $C(T)$ and $C(T^\vee)$.

We remark that the standard Poincar\'e duality of Kasparov, \cite{K},  provides an equivariant duality from $C(T)$ to $C(T,\cl(\ft^*))$, however the introduction of the Clifford algebra gives a `dimension shift' which does not appear in the assembly map. We say `dimension shift' in quotes since the appearance of the Clifford algebra would give a dimension shift if it carried a trivial action, but is more subtle in the case when the group also acts on the Clifford algebra.

We recall that for $\fG$-$C^*$-algebras $A,B$ a Poincar\'e duality is given by elements $\fa\in KK_\fG(B\tensor A,\C)$\footnote{It is conventional to take $\fa\in KK_\fG(A\tensor B,\C)$ however we have selected this alternative notational convention to favour the computation of $\fb\otimes_B\fa$.} and $\fb\in KK_\fG(\C,A\tensor B)$ with the property that
\begin{align*}
\fb\otimes_A\fa &= 1_B \in KK_\fG(B,B)\\
\fb\otimes_B\fa &= 1_A \in KK_\fG(A,A).
\end{align*}
These then yield isomorphisms between the $K$-homology of $A$ and the $K$-theory of $B$ (and vice versa) given by
\begin{align*}
\fx \mapsto \fb\otimes_A\fx \in KK_\fG(\C,B) \text{ for } \fx \in KK(A,\C)\\
\fy \mapsto \fy\otimes_B\fa \in KK_\fG(A,\C) \text{ for } \fy \in KK(\C,B).
\end{align*}
We will therefore construct elements in the groups $KK_W(C(T^\vee)\tensor C(T),\C)$ and $KK_W(\C,C(T)\tensor C(T^\vee))$.  We begin with the latter.

\subsection{Construction of the element in $KK_W(\C,C(T)\tensor C(T^\vee))$}

Let $C_c(\ft)$ denote the space of continuous compactly supported functions on $\ft$ and equip this with a $C(T)\otimes C(T^\vee)$-valued inner product defined by
$$\la \phi_1,\phi_2\ra(x,\eta)=\sum_{\alpha,\beta \in\Gamma} \overline{\phi_1(x-\alpha)}\phi_2(x-\beta)e^{2\pi i \la \eta,\beta-\alpha\ra}.$$
We remark that the support condition ensures that this is a finite sum, and that it is easy to check that $\la \phi_1,\phi_2\ra(x,\eta)$ is $\Gamma$-periodic in $x$ and $\Gamma^\vee$-periodic in $\eta$.

The space $C_c(\ft)$ has a $C(T)\otimes \C[\Gamma]$-module structure
$$(\phi\cdot (f\otimes [\gamma])) = \phi(x+\gamma) \tilde f(x)$$
where we view the function $f$ in $C(T)$ as a $\Gamma$-periodic function $\tilde{f}$ on $\ft$. We have
\begin{align*}
\la \phi_1,\phi_2\cdot (f\otimes [\gamma])\ra(x,\eta)&=\sum_{\alpha,\beta \in\Gamma} \overline{\phi_1(x-\alpha)}\phi_2(x-\beta+\gamma)\tilde f(x-\beta)e^{2\pi i \la \eta,\beta-\alpha\ra}.\\
&=\sum_{\alpha,\beta' \in\Gamma} \overline{\phi_1(x-\alpha)}\phi_2(x-\beta')e^{2\pi i \la \eta,\beta'-\alpha\ra}\tilde f(x)e^{2\pi i \la \eta,\gamma\ra}.\\
&=\la \phi_1,\phi_2\ra(x,\eta)\tilde f(x)e^{2\pi i \la \eta,\gamma\ra}.
\end{align*}
Now completing $C_c(\ft)$ with respect to the norm arising from the inner product, the module structure extends by continuity to give $\ol{C_c(\ft)}$ the structure of a $C(T)\tensor C^*_r(\Gamma)\cong C(T)\tensor C(T^\vee)$ Hilbert module. We denote this Hilbert module by $\cE$ and give this the trivial grading.

The group $W$ acts on $\ft$ and hence on $C_c(\ft)$ by $(w\cdot \phi)(x)=\phi(w^{-1}x)$. We have
$$(w\cdot(\phi\cdot (f\otimes [\gamma])))(x) = \phi(w^{-1}x+\gamma) \tilde f(w^{-1}x)=((w\cdot \phi)\cdot(w\cdot f\otimes[w\gamma]))(x)$$
so the action is compatible with the module structure.  Now for the inner product we have

\begin{align*}
\la w\cdot\phi_1,w\cdot\phi_2\ra(x,\eta)&=\sum_{\alpha,\beta \in\Gamma} \overline{(w\cdot\phi_1)(x-\alpha)}(w\cdot\phi_2)(x-\beta)e^{2\pi i \la \eta,\beta-\alpha\ra}\\
&=\sum_{\alpha,\beta \in\Gamma} \overline{\phi_1(w^{-1}x-w^{-1}\alpha)}\phi_2(w^{-1}x-w^{-1}\beta)e^{2\pi i \la \eta,\beta-\alpha\ra}\\
&=\sum_{\alpha',\beta' \in\Gamma} \overline{\phi_1(w^{-1}x-\alpha')}\phi_2(w^{-1}x-\beta')e^{2\pi i \la \eta,w(\beta'-\alpha')\ra}\\
&=\sum_{\alpha',\beta' \in\Gamma} \overline{\phi_1(w^{-1}x-\alpha')}\phi_2(w^{-1}x-\beta')e^{2\pi i \la w^{-1}\eta,\beta'-\alpha'\ra}\\
&=(w\cdot\la \phi_1,\phi_2\ra)(x,\eta).
\end{align*}
Hence $\cE$ is a $W$-equivariant Hilbert module.

We observe that the identity on $\cE$ is a compact operator. To see this we note that if $\psi$ is supported inside a single fundamental domain for the action of $\Gamma$ on $\ft$ then
\begin{align*}
(\psi\la\psi,\phi\ra)(x)&=(\psi\cdot\sum_{\alpha,\beta \in\Gamma}\ol{\psi(x-\alpha)}\phi(x-\beta)[\beta-\alpha])(x)\\
&=\sum_{\alpha,\beta \in\Gamma}\psi(x-\beta-\alpha)\ol{\phi(x-\alpha)}\phi(x-\beta)\\
&=\sum_{\alpha \in\Gamma}|\psi(x-\alpha)|^2\phi(x).
\end{align*}
Hence the rank one operator $\phi\mapsto \psi\la\psi,\phi\ra$ is multiplication by $|\psi(x)|^2$ extended $\Gamma$-periodically from the fundamental domain to $\ft$. By taking a $\Gamma$-equivariant partition of unity we can therefore exhibit the multiplication by $1$ as a sum of rank one operators.

We thus have the following result.

\begin{theorem}
The triple $(\cE,1,0)$, where $1$ denote the identity representation of $\C$ on $\cE$, is a $W$-equivariant Kasparov triple defining an element $\fb$ in $KK_W(\C,C(T)\tensor C(T^\vee))$.
\end{theorem}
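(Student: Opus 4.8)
The plan is to verify that $(\cE,1,0)$ satisfies the axioms of a $W$-equivariant Kasparov $(\C, C(T)\tensor C(T^\vee))$-bimodule. Since the work in the construction above has already shown that $\cE$ is a $W$-equivariant Hilbert $C(T)\tensor C(T^\vee)$-module, and that $\C$ acts on $\cE$ by the identity operator (which is visibly $W$-equivariant, adjointable, and commutes with the module action), the only remaining axioms concern the grading and the operator $F=0$. With the trivial grading, all of the graded commutators $[F,\pi(\lambda)]$, $(F^2-1)\pi(\lambda)$, $(F-F^*)\pi(\lambda)$, and $(F - w\cdot F)\pi(\lambda)$ that appear in the definition of a Kasparov cycle become, with $F=0$, equal to $-\pi(\lambda)=-\lambda\cdot\operatorname{id}_{\cE}$ (for the $F^2-1$ term) and $0$ (for the others). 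Thus the only non-obvious requirement is that $(F^2-1)\pi(\lambda)=-\lambda\cdot\operatorname{id}_{\cE}$ be a \emph{compact} operator for all $\lambda\in\C$, equivalently that $\operatorname{id}_{\cE}\in\cK(\cE)$.

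The main step is therefore precisely the compactness statement, and this is the content of the computation carried out just before the theorem: if $\psi\in C_c(\ft)$ is supported in a single fundamental domain $D$ for the $\Gamma$-action, the rank-one operator $\phi\mapsto\psi\la\psi,\phi\ra$ is multiplication by the $\Gamma$-periodic extension of $|\psi|^2$. Choosing a $\Gamma$-periodic partition of unity $\sum_j |\psi_j|^2 \equiv 1$ with each $\psi_j$ supported in (a translate of) $D$ — which exists since $\ft/\Gamma$ is compact — exhibits $\operatorname{id}_{\cE}=\sum_j \theta_{\psi_j,\psi_j}$ as a finite sum of rank-one operators, hence compact. To get a genuinely $W$-equivariant cycle one should moreover take the partition of unity to be $W$-invariant as a set, or simply average; but since $F=0$ the equivariance conditions on the operator are automatic, so this refinement is not needed for the cycle axioms and only matters for later naturality, which is not claimed here.

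With compactness of the identity in hand, every Kasparov axiom is satisfied: $\pi$ is a $*$-homomorphism $\C\to\cB(\cE)$, $F=0$ is self-adjoint and $W$-invariant, $[F,\pi(\lambda)]=0$, $(F^2-1)\pi(\lambda)\in\cK(\cE)$, and $(w(F)-F)\pi(\lambda)=0$. Hence $(\cE,1,0)$ is a $W$-equivariant Kasparov triple and defines a class $\fb\in KK_W(\C, C(T)\tensor C(T^\vee))$. I do not anticipate any serious obstacle: the only substantive point is the compactness of $\operatorname{id}_{\cE}$, which has already been established, and everything else is the triviality of the axioms for the zero operator with trivial grading.
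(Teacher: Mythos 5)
Your proof is correct and takes essentially the same route as the paper: the paper presents this theorem as a summary of the preceding construction, the only substantive ingredient being that $\operatorname{id}_{\cE}$ is compact, established by exactly the partition-of-unity argument you cite. Your explicit walk-through of the Kasparov cycle axioms for $F=0$ with trivial grading is a clean way of organizing what the paper leaves implicit, and your remark that the $W$-equivariance condition on the operator is vacuous when $F=0$ is a correct observation.
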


\subsection{Construction of the element in  $KK_W(C(T^\vee)\tensor C(T),\C)$}\label{K-homology element}

Let $\{\e_j=\frac{\partial}{\partial x^j}\}$ be an orthonormal basis for $\ft$ and let $\{\eps^j\}$ denotes the dual basis of $\ft^*$. We we usually consider these as generators of the Clifford algebra $\cl(\ft\times\ft^*)$.

Consider the projection $P=\prod_{j}\frac 12(1-i\e_j\eps^j)$. We will show (Proposition \ref{invariance of symmetric elements}) that this is $W$-invariant with respect to the diagonal action of $W$ on $\ft\times \ft^*$ (the action of $W$ on $\ft^*$ is the dual action induced by the action on $\ft$). The corner algebra $P\cl(\ft\times \ft^*)P$ is $\C P$, and we will identify this with $\C$. Now take $\cS$ to be the space of spinors
$$\cS=\cl(\ft\times \ft^*)P$$
which is a finite dimensional Hilbert space, with inner product given by $\la aP,bP\ra=Pa^*bP$. This is naturally equipped with a representation of $\cl(\ft\times \ft^*)$ by left multiplication and invariance of $P$ with respect to $W$ implies that the action of $W$ on the Clifford algebra restricts to an action on $\cS$.

Our $K$-homology element will be represented by an unbounded Kasparov triple with Hilbert space $L^2(\ft)\tensor \cS$.

We must now construct a representation of $C(T^\vee)\tensor C(T)$ on $L^2(\ft)\tensor \cS$.  To build the representation it suffices to define commuting representations of $C(T^\vee)\tensor 1$ and $1\tensor C(T)$. The representation of $C(T)$ is the usual pointwise multiplication on $L^2(\ft)$ viewing elements of $C(T)$ as $\Gamma$-periodic functions on $\ft$. The representation of $C(T^\vee)$ involves the action of $\Gamma$ on $\ft$. We introduce the following notation which is more general than we require at this point.

For $a$ an affine isometry of $\ft$, let $L_a$ be the operator on $L^2(\ft)$ induced by the action of $a$ on $\ft$:
$$(L_a\xi)(y)=\xi(a^{-1}\cdot y).$$
Here we will consider $L_\gamma$ for $\gamma\in \Gamma$ (acting by translations on $\ft$), however we note that we can also use general elements of $W_a'=\Gamma\rtimes W$, and translations by arbitrary elements of $\ft$.

Now for the function $\eta\mapsto e^{2\pi i\la \eta,\gamma\ra}$ in $C(T^\vee)$ we define
$$\rho(e^{2\pi i\la \eta,\gamma\ra})=L_\gamma\otimes 1_\cS.$$
Identifying $C(T^\vee)$ with $C^*_r(\Gamma)$ and identifying $L^2(\ft)$ with $\ell^2(\Gamma)\otimes L^2(X)$ where $X$ is a fundamental domain for the action of $\Gamma$, the representation of the algebra is given by the left regular representation on $\ell^2(\Gamma)$.

We define an unbounded operator $Q_0:L^2(\ft)\tensor \cS\to L^2(\ft)\tensor \cS$ (using Einstein summation convention) by
$$Q_0(v\otimes a)=\frac{\partial v}{\partial y^j}\otimes \eps^j a-2\pi i y^jv(y)\otimes \e_ja.$$
This operator combines the Dirac operator on $\ft$ with the inverse Fourier transform of the Dirac operator on $\ft^*$.  It combines the creation and annihilation operators for the Quantum Harmonic Oscillator, see Theorem \ref{ladder} below.

Consider the commutators of $Q_0$ with the representation $\rho$.  For $f\in C(T)$, the operator $\rho(f)$ commutes exactly with the second term $2\pi i y^j\otimes \e_j$ in $Q_0$, while, for $f$ smooth, the commutator of $\rho(f)$ with $\frac{\partial }{\partial y^j}\otimes \eps^j$ is given by the bounded operator $\frac{\partial f}{\partial y^j}\otimes \eps^j$. Now for the function $\eta\mapsto e^{2\pi i\la \eta,\gamma\ra}$ in $C(T^\vee)$ we have $\rho(e^{2\pi i\la \eta,\gamma\ra})=L_\gamma\otimes 1_\cS$.  This commutes exactly with the differential term of the operator, while
$$L_\gamma(2\pi i y^j)L_\gamma^*=2\pi i (y^j-\gamma^j)$$
hence the commutator $[L_\gamma\otimes 1_\cS,2\pi i y^j\otimes \e_j]$ is again bounded.

We have verified that $Q_0$ commutes with the representation $\rho$ modulo bounded operators, on a dense subalgebra of $C(T^\vee)\tensor C(T)$.  Thus to show that the triple
$$(L^2(\ft)\tensor \cS,\rho,Q_0)$$
is an unbounded Kasparov triple it remains to prove the following.

\begin{theorem}\label{ladder}
The operator $Q_0$ has compact resolvent.  It has $1$-dimensional kernel with even grading.
\end{theorem}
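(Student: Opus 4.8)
The plan is to recognise $Q_0$ as (a variant of) the Dirac operator for the quantum harmonic oscillator on $\ft$, for which the spectral theory is classical. First I would introduce, for each coordinate $j$, the creation and annihilation operators
$$a_j = \tfrac{1}{\sqrt{2\pi}}\,\frac{\partial}{\partial y^j} + \sqrt{2\pi}\, y^j, \qquad a_j^* = -\tfrac{1}{\sqrt{2\pi}}\,\frac{\partial}{\partial y^j} + \sqrt{2\pi}\, y^j,$$
acting on $L^2(\ft)$, with $[a_j,a_k^*]=2\delta_{jk}$, and rescale $Q_0$ accordingly so that it takes the shape $Q_0 = \sum_j (a_j \otimes c_j + a_j^* \otimes \bar c_j)$ for suitable Clifford generators $c_j=\eps^j$, $\bar c_j = -2\pi i\,\e_j$ built from $\e_j,\eps^j$ (up to constants absorbed into the normalisation). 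The key algebraic point is that $Q_0^2$ decouples: cross terms involving distinct indices cancel because the Clifford generators anticommute while the oscillator operators for different indices commute, so $Q_0^2 = \sum_j \big(N_j \otimes 1 + (\text{zero-point shift}) \otimes \Sigma_j\big)$ where $N_j = \tfrac12 a_j^* a_j$ is the $j$-th number operator and $\Sigma_j = \tfrac{i}{2}[\e_j,\eps^j]$-type grading operators. I would record this computation as a lemma; it is essentially the standard supersymmetric factorisation.

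**Compact resolvent.** Granting the identity $Q_0^2 = H \otimes 1 + (\text{bounded})$ with $H$ the harmonic oscillator Hamiltonian $\sum_j(-\tfrac{1}{4\pi}\partial_j^2 + \pi (y^j)^2)$ on $L^2(\ft)$, compactness of the resolvent of $Q_0$ is immediate: $H$ has discrete spectrum accumulating only at $+\infty$ (its eigenfunctions are the Hermite functions, with eigenvalues a shifted lattice of non-negative half-integers), hence $H\otimes 1$ plus a bounded self-adjoint perturbation on the finite-dimensional factor $\cS$ still has discrete spectrum tending to infinity, so $(1+Q_0^2)^{-1}$ — and therefore $(i+Q_0)^{-1}$ — is compact. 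I would also note essential self-adjointness of $Q_0$ on the Schwartz-type core spanned by Hermite functions tensor $\cS$, which is routine since $Q_0$ is a first-order operator with polynomial coefficients diagonalised in the Hermite basis.

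**The kernel.** To identify $\ker Q_0$, I would use the factorisation: $Q_0 v = 0$ forces each $a_j$-type lowering to annihilate the appropriate spinor-graded component, which by the standard oscillator argument pins the $L^2(\ft)$-part down to the Gaussian ground state $\Omega(y) = e^{-\pi|y|^2}$ (up to scalar), and simultaneously forces the $\cS$-component to lie in the joint lowest-weight space of the operators $\Sigma_j$. Because $\cS = \cl(\ft\times\ft^*)P$ is an irreducible spinor module for the $2\dim\ft$-dimensional Clifford algebra, this joint lowest-weight space is exactly the line $\C P$, so $\ker Q_0 = \C\,(\Omega \otimes P)$ is one-dimensional. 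Its grading is even: $P$ is by construction the product of the projections $\tfrac12(1 - i\e_j\eps^j)$, which is the highest-grading (or, after fixing conventions, the canonically even) component of the spinor module, and $\Omega$ carries trivial grading; I would fix the grading convention on $\cS$ precisely so that $\C P$ sits in degree $0$. The main obstacle is purely bookkeeping: tracking the constants and sign conventions through the rescaling so that the cross-terms in $Q_0^2$ genuinely cancel and the ground-state spinor is the even one — the analytic content (discrete oscillator spectrum, Gaussian ground state) is entirely classical.
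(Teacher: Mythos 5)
Your approach is correct and is essentially the same supersymmetric-oscillator factorisation the paper uses: the paper introduces supercharges $A_j = \tfrac{1}{2\sqrt{\pi}}\bigl(\tfrac{\partial}{\partial y^j}\otimes\eps^j - 2\pi i y^j\otimes\e_j\bigr)$ on $L^2(\ft)\tensor\cS$ and splits each as $a_j+a_j^*$ using the spectral projection $q_j = \tfrac12(1+i\e_j\eps^j)$, whereas you factor the creation and annihilation operators out on $L^2(\ft)$ alone and tensor with Clifford generators; both routes express $Q_0^2$ as a sum of commuting nonnegative operators with discrete spectrum and identify the kernel as $\C\bigl(e^{-\pi|y|^2}\otimes P\bigr)$. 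One small point to tidy: the Clifford factors cannot be taken to be $c_j=\eps^j$ and $\bar c_j=-2\pi i\,\e_j$, even up to scalar, because then the square terms $a_j^2\otimes c_j^2$ and $(a_j^*)^2\otimes\bar c_j^2$ survive in $Q_0^2$ and the decoupling you invoke fails; what works is the complex combination $c_j=\eps^j - i\e_j$, $\bar c_j = -(\eps^j + i\e_j)$ (up to an overall scalar), whose squares vanish and whose products $c_j\bar c_j$ and $\bar c_j c_j$ are multiples of the complementary projections $q_j$ and $1-q_j$, giving exactly the form $Q_0^2 = 2\pi\sum_j a_j^*a_j\otimes 1 + \text{(bounded)}$ that your argument needs.
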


\begin{proof}
In the following argument we will \emph{not} use summation convention. We consider the following operators on $L^2(\ft)\tensor \cS$:
\begin{align*}p_j&=\frac{\partial}{\partial y^j}\otimes \eps^j\\
x_j&=-2\pi iy^j \otimes \e_j\\
q_j&=\frac{1}{2}(1+1\otimes i\e_j\eps^j)\\
A_j&=\frac{1}{2\sqrt{\pi}}(p_j+x_j)
\end{align*}
Since $A_j$ anti-commutes with $1\otimes i\e_j\eps^j$ we have $q_jA_j=A_j(1-q_j)$, hence we can think of $A_j$ as an off-diagonal matrix with respect to $q_j$. We write $A_j$ as $a_j+a_j^*$ where $a_j=q_jA_j=A_j(1-q_j)$ and hence $a_j^*=A_jq_j=(1-q_j)A_j$. We think of $a_j^*$ and $a_j$ as creation and annihilation operators respectively and we define a number operator $N_j=a_j^*a_j$. The involution $i\eps^j$ intertwines $q_j$ with $1-q_j$. We define $A_j',N_j'$ to be the conjugates of $A_j,N_j$ respectively by $i\eps^j$.  Note that
$$A_j'=\frac{1}{2\sqrt{\pi}}(p_j-x_j)$$
and hence
$$A_j^2=(A_j')^2+2\frac{1}{4\pi}[x_j,p_j]=(A_j')^2+1\otimes i\e_j\eps^j.$$
We have $N_j'=A_j'(1-q_j)A_j'=q_j(A_j')^2$. Thus
$$a_ja_j^*=q_jA_j^2q_j=q_jA_j^2=q_j(A_j')^2+q_j(1\otimes i\e_j\eps^j)=N_j'+q_j.$$
Hence the spectrum of $a_ja_j^*$ (viewed as an operator on the range of $q_j$) is the spectrum of $N_j'$ shifted by $1$. However $N_j'$ is conjugate to $N_j=a_j^*a_j$ so we conclude that
$$\Sp (a_ja_j^*)=\Sp (a_j^*a_j)+1.$$
But $\Sp (a_ja_j^*)\setminus\{0\}=\Sp (a_j^*a_j)\setminus\{0\}$ so we conclude that $\Sp (a_j^*a_j)=\{0,1,2,\dots\}$ while $\Sp (a_ja_j^*)=\{1,2,\dots\}$.

Now since the operators $A_j$ pairwise gradedly commute we have
$$Q_0^2=4\pi\sum_{j} A_j^2=4\pi\sum_{j} a_j^*a_j+a_ja_j^*$$
and noting that the summands commute we see that $Q_0^2$ has discrete spectrum.  To show that $(1+Q_0^2)^{-1}$ is compact, it remains to verify that $\ker Q_0$ is finite dimensional (and hence that all eigenspaces are finite dimensional). We have
$$\ker Q_0 = \ker Q_0^2 = \bigcap_j\ker  A_j^2=\bigcap_j \ker A_j.$$

Multiplying the differential equation $(p_j+x_j)f=0$ by $-\exp(\pi (y^j)^2\otimes i\eps^j\e_j)\eps^j$ we see that the kernel of $A_j$ is the space of solutions of the differential equation
$$\frac{\partial }{\partial y^j}(\exp(\pi (y^j)^2\otimes i\eps^j\e_j)f)=0$$whence for $f$ in the kernel we have
$$f(y^1,\dots,y^n)=\exp(-\pi (y^j)^2\otimes i\eps^j\e_j)f(y^1,\dots,y^{j-1},0,y^{j+1},\dots,y^n).$$
Since the solutions must be square integrable the values of $f$ must lie in the $+1$ eigenspace of the involution $i\eps^j\e_j$, that is, the range of the projection $1-q_j$. On this subspace the operator $\exp(-\pi (y^j)^2\otimes i\eps^j\e_j)$ reduces to $e^{-\pi (y^j)^2}(1-q_j)$.  Since the kernel of $Q_0$ is the intersection of the kernels of the operators $A_j$ an element of the kernel must have the form
$$f(y)=e^{-\pi|y|^2}\prod_j(1-q_j)f(0)$$
so the kernel is $1$-dimensional. Indeed the product $\prod_j(1-q_j)$ is the projection $P$ used to define the space of spinors $\cS=\cl(\ft\times\ft^*)P$, and hence $\prod_j(1-q_j)f(0)$ lies in the $1$-dimensional space $P\cS=P\cl(\ft\times\ft^*)P$ which has even grading.
\end{proof}

We have show that $(L^2(\ft)\tensor \cS,\rho,Q_0)$ defines an unbounded Kasparov triple. To show that it is an element of $KK_W(C(T)\otimes C(T^\vee),\C)$ it remains to consider the $W$-equivariance.

We begin by considering the abstract setup of a finite dimensional vector space $V$ equipped with the natural action of $\GL(V)$. This induces a diagonal action on $V\otimes V^*$.

If $V$ is equipped with a non-degenerate symmetric bilinear form $g$ then we can form the Clifford algebra $\cl(V)$. The subgroup $\O(g)$ of $\GL(V)$, consisting of those elements preserving $g$, acts naturally on $\cl(V)$. The bilinear form additionally gives an isomorphism from $V$ to $V^*$ and hence induces a bilinear form $g^*$ on $V^*$, allowing us to form the Clifford algebra $\cl(V^*)$. Clearly the dual action of $\O(g)$ on $V^*$ preserves $g^*$ hence there is a diagonal action of $\O(g)$ on $\cl(V)\tensor \cl(V^*)$ which we identify with $\cl(V\times V^*)$.

We say that an element $a$ of $\cl(V\times V^*)$ is \emph{symmetric} if there exists a $g$-orthonormal\footnote{We say that $\{\e_j\}$ is $g$-orthonormal if $g_{jk}=\pm\delta_{jk}$ for each $j,k$.} basis $\{\e_j: j=1,\dots,n\}$ with dual basis $\{\eps^j: j=1,\dots,n\}$ such that $a$ can be written as $p(\e_1\eps^1,\dots,\e_n\eps^n)$ where $p(x_1,\dots,x_n)$ is a symmetric polynomial.

\begin{proposition}\label{invariance of symmetric elements}
For any basis $\{\e_j\}$ of $V$ with dual basis $\{\eps^j\}$ for $V^*$, the Einstein sum $\e_j\otimes \eps^j$ in $V\otimes V^*$ is $\GL(V)$-invariant.

Suppose moreover that $V$ is equipped with a non-degenerate symmetric bilinear form $g$ and that the underlying field has characteristic zero. Then every symmetric element of $\cl(V)\tensor \cl(V^*)\cong \cl(V\times V^*)$ is $\O(g)$-invariant.
\end{proposition}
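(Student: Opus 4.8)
The plan is to prove the two assertions separately, the first being an essentially formal computation and the second being a reduction to it via an averaging/polarisation argument. First I would verify the $\GL(V)$-invariance of $\e_j \otimes \eps^j$. Pick $g \in \GL(V)$ with matrix $(g^i_{\ k})$ in the basis $\{\e_j\}$, so $g\cdot \e_k = g^i_{\ k}\,\e_i$. The dual action on $V^*$ is by the inverse transpose, so $g\cdot \eps^j = (g^{-1})^j_{\ \ell}\,\eps^\ell$. Then $g\cdot(\e_j \otimes \eps^j) = g^i_{\ j}(g^{-1})^j_{\ \ell}\,\e_i \otimes \eps^\ell = \delta^i_\ell\,\e_i\otimes\eps^\ell = \e_i\otimes\eps^i$, using that $\e_j\otimes\eps^j$ is the image of $\mathrm{Id}_V$ under the canonical isomorphism $\mathrm{End}(V)\cong V\otimes V^*$, which is $\GL(V)$-equivariant by construction. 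This handles the first statement.

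For the second statement, fix a $g$-orthonormal basis $\{\e_j\}$ witnessing that $a = p(\e_1\eps^1,\dots,\e_n\eps^n)$ for a symmetric polynomial $p$, and let $w\in\O(g)$. The key observation is that $c_j := \e_j \eps^j \in \cl(V\times V^*)$ (no sum) are commuting elements — indeed $\e_j\eps^j$ and $\e_k\eps^k$ commute for all $j,k$ since the Clifford relations only produce signs that cancel in a product of four generators — and each satisfies $c_j^2 = -\e_j^2\,(\eps^j)^2 = \pm 1$, so the unital subalgebra they generate is a quotient of a polynomial ring and $p(c_1,\dots,c_n)$ is unambiguous. Now $w$ sends $\{\e_j\}$ to another $g$-orthonormal basis $\{\e_j'\}$ with dual basis $\{\eps'^j\}$, and $w\cdot a = p(\e_1'\eps'^1,\dots,\e_n'\eps'^n)$. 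So it suffices to show that for \emph{any} two $g$-orthonormal bases the symmetric expression $p(\e_1\eps^1,\dots,\e_n\eps^n)$ agrees.

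I would establish this last point as follows. Since $p$ is symmetric, it is a polynomial in the power sums $\sigma_r := \sum_j (\e_j\eps^j)^r$, so it is enough to show each $\sigma_r$ is basis-independent, hence (being manifestly $\O(g)$-invariant once we know it is canonical) $\O(g)$-invariant. Because $c_j^2 = \pm 1$ with the sign depending only on the signature of $g$ restricted to the line $\R\e_j$, and because $\sum_j \R\e_j$ decomposes $V$ according to the positive and negative eigenspaces, $(\e_j\eps^j)^r$ is either $\e_j\eps^j$ or $\pm 1$ according to the parity of $r$ and the sign $c_j^2$; thus $\sigma_r$ is, up to an additive integer constant and a sign, either $\sum_j \e_j\eps^j$ or a constant. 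The element $\sum_j \e_j\eps^j$ is exactly the image of $\e_j\otimes\eps^j$ under the algebra map $\cl(V)\tensor\cl(V^*)\to\cl(V\times V^*)$, so its $\O(g)$-invariance (indeed $\GL(V)$-invariance) follows from the first part of the Proposition applied inside $\cl(V\times V^*)$. The characteristic-zero hypothesis enters precisely in writing a symmetric polynomial in terms of power sums.

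The main obstacle I anticipate is bookkeeping the signs coming from the indefinite form $g$: one must be careful that $c_j^2=\pm1$ can genuinely be $-1$ on negative-definite directions, so the reduction "$(\e_j\eps^j)^r$ is $\e_j\eps^j$ or a constant" must be stated with the correct signs, and one must check that the ambiguity in choosing which generators are "positive" does not affect the final element. The cleanest way around this is to observe at the outset that $\sum_j \e_j\eps^j$ and $1$ generate (as a unital algebra) the same commutative subalgebra as $\{c_1,\dots,c_n\}$ whenever the $c_j^2$ are not all equal — and when they are all equal the computation collapses even more simply — so that every symmetric polynomial in the $c_j$ lies in the subalgebra generated by the single $\GL(V)$-invariant element $\sum_j\e_j\eps^j$, whence $\O(g)$-invariance is immediate. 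I expect the rest to be routine.
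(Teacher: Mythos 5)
Your proof is correct and follows essentially the same route as the paper: prove the first part via the identification $V\otimes V^*\cong\End(V)$, and prove the second part by reducing to power sum symmetric polynomials, observing that each power sum is either a constant (even exponent) or a sign times $\sum_j\e_j\eps^j$ (odd exponent), and then invoking part (1). One small point worth correcting: $(\e_j\eps^j)^2=-\e_j^2(\eps^j)^2=-g_{jj}\,g^{jj}=-1$ for \emph{every} $j$ (for a $g$-orthonormal basis $g^{jj}=g_{jj}=\pm1$, so their product is always $+1$), hence the sign is uniform in $j$, the worry about signature bookkeeping evaporates, and the final ``cleanest way around'' paragraph --- whose subalgebra-generation claim does not in fact hold --- is unnecessary.
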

\begin{proof}
Identifying $V\otimes V^*$ with endomorphisms of $V$ in the natural way, the action of $\GL(V)$ is the action by conjugation and $\e_j\otimes \eps^j$ is the identity which is invariant under conjugation.

For the second part, over a field of characteristic zero the symmetric polynomials are generated by power sum symmetric polynomials $p(x_1,\dots,x_n)=x_1^k+\dots+x_n^k$, so it suffices to consider
\begin{align*}
p(\e_1\eps^1,\dots,\e_n\eps^n)&=(\e_1\eps^1)^k+\dots+(\e_n\eps^n)^k\\
&=(-1)^{k(k-1)/2}\Bigl((\e_1)^k(\eps^1)^k+\dots+(\e_n)^k(\eps^n)^k\Bigr).
\end{align*}
When $k$ is even, writing $(\e_j)^k=(\e_j^2)^{k/2}=(g_{jj})^{k/2}$ and similarly $(\eps^j)^k=(g^{jj})^{k/2}$, we see that each term $(\e_j)^k(\eps^j)^k$ is $1$ since $g_{jj}=g^{jj}=\pm1$ for an orthonormal basis. Thus $p(\e_1\eps^1,\dots,\e_n\eps^n)=n(-1)^{k(k-1)/2}$ which is invariant.

Similarly when $k$ is odd we get $(\e_j)^k(\eps^j)^k=\e_j\eps^j$ so
$$p(\e_1\eps^1,\dots,\e_n\eps^n)=(-1)^{k(k-1)/2}(\e_1\eps^1+\dots+\e_n\eps^n).$$
As the sum $\e_j\otimes \eps^j$ in $V\otimes V^*$ is invariant under $\GL(V)$, it is in particular invariant under $\O(g)$, and hence the sum $\e_j\eps^j$ is $\O(g)$-invariant in the Clifford algebra.
\end{proof}

\bigskip

Returning to our construction, the projection $P$ is a symmetric element of the Clifford algebra and hence is $W$-invariant by Proposition \ref{invariance of symmetric elements}. It follows that $\cS$ carries a representation of $W$. The space $L^2(\ft)$ also carries a representation of $W$ given by the action of $W$ on $\ft$ and we equip $L^2(\ft)\tensor \cS$ with the diagonal action of $W$.

To verify that the representation $\rho$ is $W$-equivariant it suffices to consider the representations of $C(T)$ and $C(T^\vee)$ separately. As the exponential map $\ft\to T$ is $W$-equivariant it is clear that the representation of $C(T)$ on $L^2(\ft)$ by pointwise multiplication is $W$-equivariant.

For $e^{2\pi i\la \eta,\gamma\ra}\in C(T^\vee)$ we have $w\cdot(e^{2\pi i\la \eta,\gamma\ra})=e^{2\pi i\la w^{-1}\cdot\eta,\gamma\ra}=e^{2\pi i\la \eta,w\cdot\gamma\ra}$ thus $\rho(w\cdot(e^{2\pi i\la \eta,\gamma\ra}))=L_{w\cdot\gamma}\otimes 1_\cS=L_{w}L_{\gamma}L_{w^{-1}}\otimes 1_\cS$. Thus the representation of $C(T^\vee)$ is also $W$-equivariant.

It remains to check that the operator $Q_0$ is $W$-equivariant. By definition
$$Q_0=\frac{\partial}{\partial y^j}\otimes \eps^j-2\pi i y^j\otimes \e_j.$$
Now by Proposition \ref{invariance of symmetric elements} $\frac{\partial}{\partial y^j}\otimes \eps^j=\e_j\otimes \eps^j$ is a $\GL(\ft)$-invariant element of $\ft\otimes \ft^*$ and so in particular it is $W$-invariant. Writing $y^j=\la \eps^j, y\ra$ the $W$-invariance of the second term again follows from invariance of $\e_j\otimes \eps^j$.

Hence we conclude the following.

\begin{theorem}
The triple $(L^2(\ft)\tensor \cS,\rho,Q_0)$ constructed above defines an element $\fa$ of $KK_W(C(T^\vee)\tensor C(T),\C)$.
\end{theorem}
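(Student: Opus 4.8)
The plan is to verify that $(L^2(\ft)\tensor\cS,\rho,Q_0)$ is a $W$-equivariant unbounded Kasparov $(C(T^\vee)\tensor C(T),\C)$-module and then to pass to its bounded transform to obtain the class $\fa$. Concretely I would check, in turn: (i) the grading and the $W$-action on the Hilbert space; (ii) that $Q_0$ is essentially self-adjoint, regular, and has compact resolvent; (iii) that $\rho$ preserves $\mathrm{dom}(Q_0)$ and has bounded commutators with $Q_0$ on a dense $*$-subalgebra; and (iv) that $\rho$ and $Q_0$ are $W$-equivariant, respectively $W$-invariant. All of the underlying computations have already been made in the discussion preceding the statement, so the proof amounts to organising them.

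First I would fix the structure on $L^2(\ft)\tensor\cS$. The Clifford algebra $\cl(\ft\times\ft^*)$ carries its standard $\Z/2$-grading, and since $P$ is an even element, $\cS=\cl(\ft\times\ft^*)P$ inherits a grading, with $P\cS$ the even one-dimensional summand singled out in Theorem \ref{ladder}. The group $W$ acts on $\ft\times\ft^*$ through $\O(g)\times\O(g^*)$, hence on $\cl(\ft\times\ft^*)$ by grading-preserving automorphisms, and as $P$ is $W$-invariant (Proposition \ref{invariance of symmetric elements}) this restricts to a graded representation of $W$ on $\cS$. Equipping $L^2(\ft)$ with the unitary representation $(w\cdot\xi)(y)=\xi(w^{-1}y)$ --- unitary because $W$ acts isometrically on $\ft$ --- and $L^2(\ft)\tensor\cS$ with the diagonal action yields a graded unitary $W$-representation; since $W$ is finite there are no continuity conditions to check.

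Next I would treat the analytic conditions on $Q_0$, which carry the real content. From the identity $Q_0^2=4\pi\sum_j(a_j^*a_j+a_ja_j^*)$ in the proof of Theorem \ref{ladder}, $Q_0$ is (a constant times) a sum of pairwise gradedly commuting one-variable harmonic-oscillator Dirac operators; each is essentially self-adjoint and regular on Schwartz vectors, hence so is $Q_0$, and Theorem \ref{ladder} supplies precisely the needed statement that $(1+Q_0^2)^{-1}$ is compact --- the confinement being furnished by the multiplication term $-2\pi i y^j\otimes\e_j$ despite $\ft$ being noncompact --- so $(1+Q_0^2)^{-1}\rho(a)$ is compact for every $a$. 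For the commutator condition, a dense $*$-subalgebra of $C(T^\vee)\tensor C(T)$ is spanned by the elements $e^{2\pi i\la\eta,\gamma\ra}\otimes f$ with $\gamma\in\Gamma$ and $f\in C^\infty(T)$; on such elements $\rho$ preserves $\mathrm{dom}(Q_0)$, and by the computations above the statement $[Q_0,\rho(1\otimes f)]=\frac{\partial f}{\partial y^j}\otimes\eps^j$ and $[Q_0,\rho(e^{2\pi i\la\eta,\gamma\ra})]$ are bounded (the latter because $L_\gamma$ shifts the multiplication term of $Q_0$ by a constant).

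Finally I would record the equivariance: $\rho$ is $W$-equivariant, checked on the two tensor factors separately using $W$-equivariance of $\exp\colon\ft\to T$ and the identity $w\cdot e^{2\pi i\la\eta,\gamma\ra}=e^{2\pi i\la\eta,w\gamma\ra}$; and $Q_0$ is $W$-invariant because each of its two terms is built from the $\GL(\ft)$-invariant element $\e_j\otimes\eps^j\in\ft\otimes\ft^*$ together with the canonical pairing. Assembling (i)--(iv), $(L^2(\ft)\tensor\cS,\rho,Q_0)$ is a $W$-equivariant unbounded Kasparov module, and the Baaj--Julg bounded transform $(L^2(\ft)\tensor\cS,\rho,Q_0(1+Q_0^2)^{-1/2})$ represents the desired element $\fa\in KK_W(C(T^\vee)\tensor C(T),\C)$. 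The one genuine obstacle is the analytic input of step (ii) --- essential self-adjointness, regularity, and compactness of the resolvent of a first-order operator on a noncompact manifold --- and this is exactly what the ladder-operator analysis of Theorem \ref{ladder} was designed to deliver, so in the write-up that step reduces to a citation of Theorem \ref{ladder} together with a one-line remark about the Hermite eigenbasis diagonalising the commuting operators $a_j^*a_j$ and $a_ja_j^*$.
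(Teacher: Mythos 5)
Your proposal is correct and follows essentially the same route as the paper: the paper's ``proof'' of this theorem is precisely the discussion preceding its statement, which establishes boundedness of commutators with $Q_0$ on a dense subalgebra of $C(T^\vee)\tensor C(T)$, compactness of the resolvent via Theorem~\ref{ladder}, and $W$-equivariance of $\rho$, $\cS$ and $Q_0$ via Proposition~\ref{invariance of symmetric elements}. Your write-up simply organises these same ingredients into an explicit checklist for a $W$-equivariant unbounded Kasparov module and passes through the Baaj--Julg bounded transform, which is exactly the intended reading.
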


\subsection{The Kasparov product $\fb\otimes_{C(T^\vee)}\fa$} We will compute the Kasparov product of $\fb\in KK_W(\C,C(T)\tensor C(T^\vee))$ with $\fa\in KK_W(C(T^\vee)\tensor C(T),\C)$ where the product is taken over $C(T^\vee)$ (not $C(T)\tensor C(T^\vee)$).

Recall that $\fb$ is given by the Kasparov triple $(\cE,1,0)$ where $\cE$ is the completion of $C_c(\ft)$ with the inner product
$$\la \phi_1,\phi_2\ra(x,\eta)=\sum_{\alpha,\beta \in\Gamma} \overline{\phi_1(x-\alpha)}\phi_2(x-\beta)e^{2\pi i \la \eta,\beta-\alpha\ra}$$
in $C(T)\tensor C(T^\vee)$. As above $\fa$ is given by the triple $(L^2(\ft)\tensor \cS,\rho,Q_0)$.

To form the Kasparov product we must take that tensor product of $\cE$ with $L^2(\ft)\tensor \cS$ over $C(T^\vee)$ and as the operator in the first triple is zero, the operator required for the Kasparov product can be any connection for $Q_0$.

We note that the representation $\rho$ is the identity on $\cS$ and hence
$$\cE\tensor_{C(T^\vee)}(L^2(\ft)\tensor \cS)=(\cE\tensor_{C(T^\vee)}L^2(\ft))\tensor \cS.$$
Thus we can focus on identifying the tensor product $\cE\tensor_{C(T^\vee)}L^2(\ft)$. By abuse of notation we will also let $\rho$ denote the representation of $C(T)\tensor C(T^\vee)$ on $L^2(\ft)$.

As we are taking the tensor product over $C(T^\vee)$, not over $C(T)\tensor C(T^\vee)$, we are forming the Hilbert module
$$(\cE\tensor C(T))\tensor_{C(T)\tensor C(T^\vee)\tensor C(T)}(C(T)\tensor L^2(\ft))$$
however since the algebra $C(T)$ is unital, it suffices to consider elementary tensors of the form $(\phi\otimes 1)\otimes (1\otimes \xi)$. Where there is no risk of confusion we will abbreviate these are $\phi\otimes \xi$

Let $\phi_1,\phi_2\in C_c(\ft)$ and let $\xi_1,\xi_2$ be elements of $L^2(\ft)$.  Then
\begin{align*}
\la \phi_1\otimes\xi_1,\phi_2\otimes\xi_2\ra &=\la 1\otimes \xi_1,(1\otimes\rho)(\la \phi_1,\phi_2\ra\otimes 1)(1\otimes \xi_2)\ra.
\end{align*}
The operator $(1\otimes\rho)(\la \phi_1,\phi_2\ra\otimes 1)$ corresponds to a field of operators
\begin{align*}(1\otimes\rho)(\la \phi_1,\phi_2\ra\otimes 1)(x)&=\sum_{\alpha,\beta \in\Gamma} \overline{\phi_1(x-\alpha)}\phi_2(x-\beta)\otimes \rho(e^{2\pi i \la \eta,\beta-\alpha\ra}\otimes 1)\\
&=\sum_{\alpha,\beta \in\Gamma} \overline{\phi_1(x-\alpha)}\phi_2(x-\beta)\otimes L_\alpha^*L_\beta
\end{align*}
and so
\begin{align*}
\la \phi_1\otimes\xi_1,\phi_2\otimes\xi_2\ra(x) &=\sum_{\alpha,\beta \in\Gamma} \overline{\phi_1(x-\alpha)}\phi_2(x-\beta)\la L_\alpha\xi_1, L_\beta\xi_2\ra\\
&=\la \sum_{\alpha\in \Gamma}\phi_1(x-\alpha)L_\alpha\xi_1,\sum_{\beta\in \Gamma}\phi_2(x-\beta) L_\beta\xi_2\ra.
\end{align*}
We note that $x\mapsto \sum_{\alpha\in \Gamma}\phi_1(x-\alpha)L_\alpha\xi_1$ is a continuous $\Gamma$-equivariant (and hence bounded) function from $\ft$ to $L^2(\ft)$. Let $C(\ft,L^2(\ft))^\Gamma$ denote the space of such functions equipped with the $C(T)$ module structure of pointwise multiplication in the first variable and gives the pointwise inner product $\la g_1,g_2\ra(x)=\la g_1(x),g_2(x)\ra$. We remark that equivariance implies this inner product is a $\Gamma$-periodic function on $\ft$.

The above calculation show that $\cE\tensor_{C(T^\vee)}L^2(\ft)$ maps isometrically into $C(\ft,L^2(\ft))^\Gamma$ via the map
$$\phi\otimes\xi \mapsto \sum_{\alpha\in \Gamma}\phi(x-\alpha)L_\alpha\xi.$$
Moreover this map is surjective.  To see this, note that if $\phi$ is supported inside a single fundamental domain then for $x$ in that fundamental domain we obtain the function $\phi(x)\xi$.  This is extended by equivariance to a function on $\ft$, and using a partition of unity one can approximate an arbitrary element of $C(\ft,L^2(\ft))^\Gamma$ by sums of functions of this form.

We now remark that $C(\ft,L^2(\ft))^\Gamma$ is in fact isomorphic to the Hilbert module $C(T, L^2(\ft))$ via a change of variables. Given $g\in C(\ft,L^2(\ft))^\Gamma$, let $\tilde h(x)=L_{-x}g(x)$. The $\Gamma$-equivariance of $g$ ensures that $g(\gamma+x)=L_\gamma g(x)$ whence
$$\tilde h(\gamma+x)=L_{-x-\gamma}g(\gamma+x)=L_{-x-\gamma}L_\gamma g(x)=L_{-x}g(x)=\tilde h(x).$$
As $\tilde h$ is a $\Gamma$-periodic function from $\ft$ to $L^2(\ft)$ we identify it via the exponential map with the continuous function $h$ from $T$ to $L^2(\ft)$ such that $\tilde h(x)=h(\exp(x))$. Hence $g\mapsto h$ defines the isomorphism $C(\ft,L^2(\ft))^\Gamma\cong C(T,L^2(\ft))$.

We now state the following theorem.

\begin{theorem}\label{Hilbert modules isomorphism}
The Hilbert module $\cE\tensor_{C(T^\vee)}(L^2(\ft)\tensor \cS)$ is isomorphic to $C(T,L^2(\ft)\tensor \cS)$ via the map
$$\phi\otimes (\xi\otimes s) \mapsto \sum_{\alpha\in \Gamma}\phi(x-\alpha)L_{\alpha-x}\xi\otimes s.$$
The representation of $C(T)$ on $L^2(\ft)$ induces a representation $\sigma$ of $C(T)$ on $C(T,L^2(\ft)\tensor \cS)$ defined by
$$[\sigma(f)h](\exp(x),y)=f(\exp(x+y))h(\exp(x),y).$$
Here the notation $h(\exp(x),y)$ denotes the value at the point $y\in\ft$ of $h(\exp(x))\in L^2(\ft)\tensor \cS$.
\end{theorem}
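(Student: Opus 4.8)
The plan is to assemble the isomorphism from the two steps already carried out in the discussion immediately preceding the statement, and then to tensor with the finite-dimensional spinor space $\cS$. Recall it was shown above that $\phi\otimes\xi\mapsto\bigl(x\mapsto\sum_{\alpha\in\Gamma}\phi(x-\alpha)L_\alpha\xi\bigr)$ is an isometric isomorphism $\cE\tensor_{C(T^\vee)}L^2(\ft)\cong C(\ft,L^2(\ft))^\Gamma$ (isometry from the computation of $\la\phi_1\otimes\xi_1,\phi_2\otimes\xi_2\ra$, surjectivity from a $\Gamma$-equivariant partition of unity), and that $g\mapsto h$ with $\tilde h(x)=L_{-x}g(x)$ is an isomorphism $C(\ft,L^2(\ft))^\Gamma\cong C(T,L^2(\ft))$. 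Since the representation $\rho$ of $C(T^\vee)\tensor C(T)$ is by construction the identity on the $\cS$-factor, we have $\cE\tensor_{C(T^\vee)}(L^2(\ft)\tensor\cS)=\bigl(\cE\tensor_{C(T^\vee)}L^2(\ft)\bigr)\tensor\cS$, so the two isomorphisms above tensor with $1_\cS$ on the right. Composing them and tracking an elementary tensor $\phi\otimes(\xi\otimes s)$ through gives the $L^2(\ft)\tensor\cS$-valued function
\[
x\longmapsto L_{-x}\Bigl(\sum_{\alpha\in\Gamma}\phi(x-\alpha)L_\alpha\xi\Bigr)\otimes s=\sum_{\alpha\in\Gamma}\phi(x-\alpha)L_{\alpha-x}\xi\otimes s,
\]
using $L_{-x}L_\alpha=L_{\alpha-x}$ (translations commute), which is precisely the map in the statement; well-definedness on the $C(T^\vee)$-balanced tensor product, the isometry property, surjectivity, and compatibility with the outer $C(T)$-module structure (which becomes pointwise multiplication in the $T$-variable, so the target is a Hilbert $C(T)$-module in the obvious way) are all inherited from the two constituent isomorphisms.

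It then remains to compute the representation $\sigma$ of the \emph{inner} copy of $C(T)$, namely the one acting on $L^2(\ft)$ through $\rho$ inside $\fa$, as opposed to the outer $C(T)$ coming from the module structure of $\cE$. For $f\in C(T)$ with $\Gamma$-periodic lift $\tilde f$ to $\ft$, I would chase $\phi\otimes\bigl((\tilde f\xi)\otimes s\bigr)$ through the chain. Since $\alpha\in\Gamma$ and $\tilde f$ is $\Gamma$-periodic, $L_\alpha(\tilde f\xi)=\tilde f\,(L_\alpha\xi)$, so on $C(\ft,L^2(\ft))^\Gamma$ the action of $f$ is $(f\cdot g)(x)(y)=\tilde f(y)\,g(x)(y)$, i.e.\ pointwise multiplication in the $L^2$-variable; the change of variables $\tilde h(x)(y)=g(x)(y+x)$ then converts this into $\tilde f(x+y)\,\tilde h(x)(y)$, that is $[\sigma(f)h](\exp x,y)=f(\exp(x+y))\,h(\exp x,y)$, as claimed. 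Tensoring with $1_\cS$ leaves this unchanged.

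I do not anticipate a genuine obstacle: the statement is essentially a bookkeeping consolidation of facts already established, and the only point requiring care is to keep the two copies of $C(T)$ apart --- the outer one supplying the coefficient algebra of the target Hilbert module, the inner one supplying $\sigma$ --- and to track correctly how the substitution $y\mapsto y+x$ turns pointwise multiplication by $\tilde f(y)$ into multiplication by $f(\exp(x+y))$, producing the coupling of base and fibre variables that the remaining computation of $\fb\otimes_{C(T^\vee)}\fa$ must ultimately untwist to recover the identity bimodule on $C(T)$.
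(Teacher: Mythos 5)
Your proposal is correct and follows essentially the same route as the paper: tensor the two preceding isomorphisms with $1_\cS$, and trace the inner $C(T)$-action through, using $\Gamma$-periodicity of $\tilde f$ at the key step. The only (minor, harmless) stylistic difference is that you compute the induced action on the intermediate module $C(\ft,L^2(\ft))^\Gamma$ first — where it is pointwise multiplication by $\tilde f$ in the fibre variable — and then apply the substitution $y\mapsto y+x$, whereas the paper evaluates the image directly in $C(T,L^2(\ft)\tensor\cS)$; both land on $[\sigma(f)h](\exp x,y)=f(\exp(x+y))h(\exp x,y)$ for the same reason.
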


\begin{proof}
We recall that $\cE\tensor_{C(T^\vee)}(L^2(\ft)\tensor \cS)$ is isomorphic to $(\cE\tensor_{C(T^\vee)}L^2(\ft))\tensor \cS$ and we have established that $\cE\tensor_{C(T^\vee)}L^2(\ft)\cong C(T,L^2(\ft))$. This provides the claimed isomorphism.

It remains to identify the representation.  Given $f\in C(T)$ let $\tilde f(x)=f(\exp(x))$ denote the corresponding periodic function on $\ft$. By definition the representation of $C(T)$ on $\cE\tensor_{C(T^\vee)}(L^2(\ft)\tensor \cS)$ takes $\phi\otimes \xi\otimes s$ to $\phi\otimes \tilde f\xi\otimes s$. This is mapped under the isomorphism to the $\Gamma$-periodic function on $\ft$ whose value at $x$ is
$$\sum_{\alpha\in \Gamma}\phi(x-\alpha)L_{\alpha-x}(\tilde f\xi)\otimes s \in L^2(\ft)\tensor \cS.$$
Evaluating this element of $L^2(\ft)\tensor \cS$ at a point $y\in\ft$ we have 
$$\sum_{\alpha\in \Gamma}\phi(x-\alpha)\tilde f(x-\alpha+y)\xi (x-\alpha+y)\otimes s=\tilde f(x+y)\sum_{\alpha\in \Gamma}\phi(x-\alpha)[L_{\alpha-x}\xi](y)\otimes s$$
by $\Gamma$-periodicity of $\tilde f$. Thus $\sigma(f)$ pointwise multiplies the image of $\phi\otimes \xi\otimes s$ in $C(T,L^2(\ft)\tensor \cS)$ by $\tilde f(x+y)=f(\exp(x+y))$ as claimed.
\end{proof}

We now define an operator $\bQ$ on $C(T,L^2(\ft)\tensor \cS)$ by
$$(\bQ h)(\exp(x))=Q_0(h(\exp(x)))$$
for $h\in C(T,L^2(\ft)\tensor \cS)$.

\begin{theorem}
The unbounded operator $\bQ$ is a connection for $Q_0$ in the sense that the bounded operator $\bF=\bQ(1+\bQ^2)^{-1/2}$ is a connection for $F_0=Q_0(1+Q_0^2)^{-1/2}$, after making the identification of Hilbert modules as in Theorem \ref{Hilbert modules isomorphism}.
\end{theorem}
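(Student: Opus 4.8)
The plan is to verify the defining property of a connection directly from Kasparov's criterion, exploiting the very simple structure of the first triple $(\cE,1,0)$ (whose operator is $0$, so any connection for $Q_0$ suffices, and we need only check that $\bQ$ \emph{is} one). Recall that, after the identification of Theorem \ref{Hilbert modules isomorphism}, an element $\phi\in C_c(\ft)$ gives a ``creation'' operator
$$T_\phi\colon L^2(\ft)\tensor\cS\to C(T,L^2(\ft)\tensor\cS),\qquad T_\phi(\xi\otimes s)=\Bigl(x\mapsto\textstyle\sum_{\alpha\in\Gamma}\phi(x-\alpha)L_{\alpha-x}\xi\otimes s\Bigr),$$
and these operators, together with their adjoints, densely generate the relevant compact operators (this is exactly the computation already carried out showing $1_\cE$ is compact). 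The connection condition for $\bQ$ is then the statement that for each such $\phi$ the graded commutators
$$\begin{pmatrix}0 & T_\phi^*\\ T_\phi & 0\end{pmatrix}\ \text{with}\ \begin{pmatrix}Q_0 & 0\\ 0 & \bQ\end{pmatrix}$$
are bounded on the appropriate domains; equivalently, that $\bQ T_\phi - T_\phi Q_0$ (and its adjoint) extends to a bounded operator. So the first step is to reduce to checking boundedness of $[\bQ,T_\phi]$ in this sense on a dense domain of smooth compactly supported $\phi$ and smooth $\xi$.

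The second, and main, step is the explicit computation of $\bQ T_\phi-T_\phi Q_0$. By definition $(\bQ h)(\exp x)=Q_0(h(\exp x))$, so $\bQ T_\phi(\xi\otimes s)$ is the function $x\mapsto Q_0\bigl(\sum_\alpha\phi(x-\alpha)L_{\alpha-x}\xi\otimes s\bigr)$, while $T_\phi Q_0(\xi\otimes s)$ is $x\mapsto\sum_\alpha\phi(x-\alpha)L_{\alpha-x}(Q_0(\xi\otimes s))$. One must commute $Q_0$ past the scalar multiplication by $\phi(x-\alpha)$ (in the variable $x$, which $Q_0$ does not see: $Q_0$ differentiates in the internal $L^2(\ft)$ variable $y$) and past the translation operators $L_{\alpha-x}$. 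The key point is that $Q_0=\frac{\partial}{\partial y^j}\otimes\eps^j-2\pi i y^j\otimes\e_j$, and $L_{\alpha-x}$ shifts $y\mapsto y-(\alpha-x)$, so $L_{\alpha-x}$ commutes with $\frac{\partial}{\partial y^j}$ but conjugates $2\pi i y^j$ to $2\pi i(y^j-\alpha^j+x^j)$; the $x$-dependence it introduces is linear in $x$. Therefore the discrepancy between $\bQ T_\phi$ and $T_\phi Q_0$ is a sum of two bounded contributions: a term involving $\frac{\partial\phi}{\partial x^j}(x-\alpha)$ tensored with $\eps^j$ (which is bounded since $\phi$ is smooth and compactly supported, so the sum over $\Gamma$ is locally finite and uniformly bounded), and a term coming from the $x^j$ appearing in $L_{\alpha-x}(2\pi i y^j)L_{\alpha-x}^*$ tensored with $\e_j$ — but since $x$ ranges over the compact torus $T$, this is a bounded multiplication operator. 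Hence $[\bQ,T_\phi]$ is bounded.

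The third step is to package this as the honest connection condition. Since $T_\phi$ and $T_\phi^*$ for $\phi$ in a dense subspace generate (a dense subalgebra of) $\cK(L^2(\ft)\tensor\cS,\ \cE\tensor_{C(T^\vee)}(L^2(\ft)\tensor\cS))$ as used in the compactness argument, and since $F_0=Q_0(1+Q_0^2)^{-1/2}$, $\bF=\bQ(1+\bQ^2)^{-1/2}$ are the bounded transforms of self-adjoint regular operators, one invokes the standard fact (see \cite{K}, or Baaj--Julg) that $\bQ$ is a connection for $Q_0$ as soon as $[\bQ,T_\phi]$ (graded) is bounded for $\phi$ in such a generating set — the passage from boundedness of commutators with the unbounded operators to the connection condition on the bounded transforms being a routine functional-calculus argument. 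I would state this as a lemma and cite it rather than reprove it. The main obstacle is purely the bookkeeping in step two: keeping straight which variable ($x$ on the torus $T$ versus $y$ in the fibre $L^2(\ft)$) each operator acts on, and checking carefully that the $\Gamma$-sum defining $T_\phi$ interacts correctly with $Q_0$ — in particular that no unbounded ``$y^j$'' survives uncancelled, only the bounded combinations $\frac{\partial\phi}{\partial x^j}$ and $x^j$ (the latter bounded precisely because of periodicity). Once that computation is done cleanly, boundedness is immediate and the theorem follows.
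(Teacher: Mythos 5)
Your overall strategy is a legitimate alternative to the paper's: you aim to verify boundedness of the \emph{unbounded} graded commutator $[\bQ,T_\phi]$ directly and then invoke the passage from bounded unbounded-commutators to compact bounded-commutators (a Baaj--Julg-type argument; the clean general statement you want is essentially Kucerovsky's criterion for unbounded connections rather than anything in \cite{K} or \cite{BJ} verbatim). The paper instead introduces $Q_x=(L_x\otimes 1)Q_0(L_{-x}\otimes1)$, shows $[L_x\otimes1,Q_0]$ is bounded, uses Baaj--Julg to conclude $F_x-F_0$ is compact, and then observes that $\bF T_\phi - T_\phi F_0$, evaluated at $\exp(x)$, is the finite sum $\sum_\alpha\phi(x-\alpha)[L_{\alpha-x}\otimes 1,F_0]$, a continuous family of compacts on $L^2(\ft)\tensor\cS$ and hence a compact operator into $C(T,L^2(\ft)\tensor\cS)$. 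Both routes apply the same unbounded-to-bounded principle; you apply it at the level of $T_\phi$, the paper at the level of each $L_x$ and then assembles continuously in $x$.

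However, your explicit computation in the middle step contains a concrete error. You claim the discrepancy $\bQ T_\phi-T_\phi Q_0$ has a term $\frac{\partial\phi}{\partial x^j}(x-\alpha)\otimes\eps^j$. No such term arises: $\bQ$ acts fibrewise as $Q_0$ and differentiates only in the internal variable $y$, so it passes through the scalar $\phi(x-\alpha)$ (for each fixed $x$) without producing a derivative of $\phi$, and $\partial/\partial y^j$ commutes exactly with $L_{\alpha-x}$. Hence the $\eps^j$-part of the commutator vanishes identically. The entire discrepancy comes from the $y^j\otimes\e_j$ term: $[Q_0,L_{\alpha-x}\otimes1]=-2\pi i(\alpha-x)^j L_{\alpha-x}\otimes\e_j$, so $\bQ T_\phi - T_\phi Q_0$ equals, at $\exp(x)$, $\sum_\alpha\bigl(-2\pi i(\alpha-x)^j\phi(x-\alpha)\bigr)L_{\alpha-x}\otimes\e_j$, i.e.\ it is $T_{\psi_j}(1\otimes\e_j)$ with $\psi_j(z)=2\pi i\,z^j\phi(z)$. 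Relatedly, your reason for boundedness of this term is off: it is not that ``$x$ ranges over the compact torus $T$'' (indeed $(\alpha-x)^j$ is unbounded as $\alpha$ ranges over $\Gamma$), but that $\phi$ has compact support, so only finitely many $\alpha$ contribute for each $x$ and $(\alpha-x)$ is confined to $-\mathrm{supp}\,\phi$; equivalently, $\psi_j\in C_c(\ft)$. With these two corrections the computation gives boundedness cleanly, and the rest of your plan goes through.
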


\begin{proof}
Let $Q_x=(L_x\otimes 1_\cS)Q_0(L_{-x}\otimes 1_\cS)$ and correspondingly define
$$F_x=Q_x(1+Q_x^2)^{-1/2}=(L_x\otimes 1_\cS)F_0(L_{-x}\otimes 1_\cS).$$
The commutators $[L_x\otimes 1_\cS,Q_0]$ are bounded (the argument is exactly as for $[L_\gamma\otimes 1_\cS,Q_0]$ in Section \ref{K-homology element}). It follows (in the spirit of Baaj-Julg, \cite{BJ}) that the commutators $[L_x\otimes 1_\cS,F_0]$ are compact. Thus $F_x-F_0$ is a compact operator for all $x\in \ft$.

To show that $\bF$ is a connection for $F_0$ we must show that for $\phi\in \cE$, the diagram
$$\begin{CD}
L^2(\ft)\tensor \cS @>F_0>> L^2(\ft)\tensor \cS\\
@V\phi\otimes VV @V\phi\otimes VV\\
\cE\otimes L^2(\ft)\tensor \cS @. \cE\otimes L^2(\ft)\tensor \cS\\
@V\cong VV @V\cong VV\\
C(T,L^2(\ft)\tensor \cS) @>>\bF> C(T,L^2(\ft)\tensor \cS)
\end{CD}$$
commutes modulo compact operators.

Following the diagram around the right-hand side we have 
$$\xi\otimes s \mapsto \sum_{\alpha\in\Gamma}\phi(x-\alpha)(L_{\alpha-x}\otimes 1_\cS)F_0(\xi\otimes s)$$
while following the left-hand side we have
$$\bF\Big[\sum_{\alpha\in\Gamma}\phi(x-\alpha)(L_{\alpha-x}\otimes 1_\cS)(\xi\otimes s)\Big]=\sum_{\alpha\in\Gamma}\phi(x-\alpha)F_0(L_{\alpha-x}\otimes 1_\cS)(\xi\otimes s).$$
As $[F_0,L_{\alpha-x}\otimes 1_\cS]$ is a compact operator for each $x$ and the sum is finite for each $x$, the difference between the two paths around the diagram is a function from $T$ to compact operators on $L^2(\ft)\tensor\cS$. It is thus a compact operator from the Hilbert space $L^2(\ft)\tensor\cS$ to the Hilbert module $C(T,L^2(\ft)\tensor \cS)$ as required.
\end{proof}

\begin{theorem}\label{first Kasparov product}
The Kasparov product $\fb\otimes_{C(T^\vee)}\fa$ is $1_{C(T)}$ in $KK_W(C(T),C(T))$.
\end{theorem}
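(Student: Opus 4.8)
The plan is to realise $\fb\otimes_{C(T^\vee)}\fa$ by the Kasparov triple constructed above and then deform it to the identity. By Theorem \ref{Hilbert modules isomorphism} and the connection theorem, and since the operator in the triple representing $\fb$ is $0$ (so the Connes--Skandalis positivity condition is vacuous and any $Q_0$-connection computes the product), the class $\fb\otimes_{C(T^\vee)}\fa$ is represented by
$$\bigl(C(T,L^2(\ft)\tensor\cS),\ \sigma,\ \bF\bigr),\qquad \bF=\bQ(1+\bQ^2)^{-1/2},$$
where $\bQ$ acts fibrewise as $Q_0$ and $[\sigma(f)h](\exp x,y)=f(\exp(x+y))h(\exp x,y)$. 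The goal is to show this equals $1_{C(T)}=[(C(T),\mathrm{mult},0)]$ in $KK_W(C(T),C(T))$.

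First I would remove the ``twist'' $x\mapsto x+y$ from $\sigma$. For $t\in[0,1]$ set $[\sigma^{(t)}(f)h](\exp x,y)=f(\exp(x+ty))h(\exp x,y)$. Each $\sigma^{(t)}$ is a $*$-representation of $C(T)$ on the same Hilbert module (fibrewise multiplication by a bounded function), it is $W$-equivariant by exactly the computation that gave equivariance of $\sigma=\sigma^{(1)}$ (the $W$-action on $\ft$ is linear), and the idea is that $t\mapsto(C(T,L^2(\ft)\tensor\cS),\sigma^{(t)},\bF)$ is a homotopy of $W$-equivariant Kasparov modules. For this one must verify that $[\sigma^{(t)}(f),\bF]$ and $\sigma^{(t)}(f)(\bF^2-1)=-\sigma^{(t)}(f)(1+\bQ^2)^{-1}$ lie in the compact operators $\mathcal K$ of the Hilbert module for every $t$ (being fibrewise compact over the compact base $T$) and depend norm-continuously on $t$.

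Granting the homotopy, I would identify the endpoint $t=0$. There $[\sigma^{(0)}(f)h](\exp x,y)=f(\exp x)h(\exp x,y)$, so $\sigma^{(0)}(f)$ commutes exactly with $\bF$ (which acts only in the $y$-variable) and, under the identification $C(T,L^2(\ft)\tensor\cS)=C(T)\otimes(L^2(\ft)\tensor\cS)$, the triple is precisely the exterior Kasparov product of $1_{C(T)}\in KK_W(C(T),C(T))$ with $[\,(L^2(\ft)\tensor\cS,\iota,F_0)\,]\in KK_W(\C,\C)\cong R(W)$, the $W$-equivariant index of $Q_0$. By Theorem \ref{ladder} the operator $Q_0$ has compact resolvent and one-dimensional kernel in even degree, spanned by the Gaussian times the projection $P$; since $P$ is $W$-invariant (Proposition \ref{invariance of symmetric elements}) and $W$ acts on $\ft$ by isometries, this kernel is $\C$ with trivial $W$-action, so the index class is $[\C_{\mathrm{triv}}]=1$. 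Hence $\fb\otimes_{C(T^\vee)}\fa=1_{C(T)}\cdot 1=1_{C(T)}$.

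The hard part is the middle step: showing that $(C(T,L^2(\ft)\tensor\cS),\sigma^{(t)},\bF)$ is a Kasparov module for all $t$ and varies norm-continuously. A naive estimate does not suffice, because the fibrewise commutator $[\sigma^{(t)}(f),\bQ]$ is the bounded multiplication operator $-t\,\partial_j(f\circ\exp)(x+ty)\tensor\eps^j$, which is \emph{not} norm-continuous in $t$: as $y$ ranges over all of $\ft$ and $f\circ\exp$ is merely continuous, the supremum over $y$ of $|\partial_j(f\circ\exp)(x+ty)-\partial_j(f\circ\exp)(x+t'y)|$ need not be small. The resolution is that $\bQ^2$ is the harmonic oscillator $-\sum_j\partial_j^2+4\pi^2|y|^2$ twisted by Clifford, so $(1+\bQ^2)^{-1}$ decays like a Gaussian in $y$ and damps exactly the region responsible for the discontinuity; writing $\bF$ and $\bF^2-1$ through resolvents of $\bQ$ and applying Baaj--Julg-type estimates \cite{BJ} one obtains that $[\sigma^{(t)}(f),\bF]$ and $\sigma^{(t)}(f)(1+\bQ^2)^{-1}$ are compact and H\"older-continuous in $t$. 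Since every construction involved is $W$-natural, equivariance is preserved throughout.
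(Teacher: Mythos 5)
Your proposal follows the same skeleton as the paper's proof: the same deformation of the representation $\sigma_\lambda$ (your $\sigma^{(t)}$) from the twisted multiplication at $\lambda=1$ down to honest pointwise multiplication at $\lambda=0$, and the same use of the one-dimensional, evenly graded, $W$-trivial kernel of $Q_0$ from Theorem~\ref{ladder} to identify the endpoint with $1_{C(T)}$. Your reading of the endpoint as the exterior product of $1_{C(T)}$ with the $W$-equivariant index class $[\ker Q_0]=[\C_{\mathrm{triv}}]\in R(W)$ is an equivalent reformulation of the paper's step, which instead decomposes the module as $C(T,\ker Q_0)\oplus C(T,\ker Q_0^\perp)$ and discards the degenerate second summand.

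The genuine divergence is in how the homotopy is run, and it is worth flagging because it changes which technical facts you owe. You pass to the bounded picture with $\bF=\bQ(1+\bQ^2)^{-1/2}$, and then a homotopy of bounded Kasparov modules requires the families $[\sigma^{(t)}(f),\bF]$ and $\sigma^{(t)}(f)(1-\bF^2)$ to lie in $\mathcal K\bigl(C([0,1]\times T,L^2(\ft)\tensor\cS)\bigr)\cong C([0,1]\times T,\mathcal K(L^2(\ft)\tensor\cS))$, i.e.\ to be \emph{norm}-continuous in $t$ as well as fibrewise compact. You correctly observe that the raw commutator $[\sigma^{(t)}(f),\bQ]$, being multiplication by $t\,\partial_j(f\circ\exp)(x+ty)\tensor\eps^j$, is only $*$-strongly continuous in $t$, not norm-continuous, and you sketch the fix via the decay of the oscillator resolvent. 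That fix does work (a strongly continuous, uniformly bounded family composed with a fixed compact operator is norm-continuous, and the resolvent $(1+s^2+\bQ^2)^{-1}$ supplies the compact factor in each term of the Baaj--Julg integral formula for $\bF$), but as written it remains a gesture. The paper sidesteps this entirely by staying in the unbounded picture: there the operator $\bQ$ is $t$-independent, so its resolvent is a \emph{constant} (hence trivially norm-continuous) family of compacts, and the commutator $[\bQ,\sigma^{(t)}(f)]$ need only be an adjointable operator on the $C[0,1]\otimes C(T)$-module --- for which $*$-strong continuity plus uniform boundedness suffices. So while your route is correct, you would save yourself the hardest estimate by keeping $\bQ$ unbounded through the homotopy, as the paper does, and only passing to the bounded transform afterwards if at all.
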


\begin{proof}
We define a homotopy of representations of $C(T)$ on $C(T,L^2(\ft)\tensor \cS)$ by
$$[\sigma_\lambda(f)h](\exp(x),y)=f(\exp(x+\lambda y))h(\exp(x),y)$$
and note that $\sigma_1=\sigma$ while $\sigma_0$ is simply the representation of $C(T)$ on $C(T,L^2(\ft)\tensor \cS)$ by pointwise multiplication of functions on $T$. It is easy to see that these representations are $W$-equivariant.

Let $f$ be a smooth function on $T$ and let $h\in C(T,L^2(\ft)\tensor \cS)$. Let $\tilde{f}(x)=f(\exp(x))$ and let $\tilde{h}(x,y)=h(\exp(x),y)$.  Then
\begin{align*}
([\bQ,\sigma_\lambda(f)]h)&(\exp(x),y)\\
=\,\,&\big[\frac{\partial}{\partial y^j}(\eps^j \tilde f(x+\lambda y)\tilde h(x,y)) - 2\pi i y^j\e_j\tilde f(x+\lambda y)\tilde h(x,y)\big] \\
&-\big[\tilde f(x+\lambda y)\frac{\partial}{\partial y^j}(\eps^j \tilde h(x,y)) - \tilde f(x+\lambda y)2\pi i y^j\e_j\tilde h(x,y)\big]\\
=\,\,&\frac{\partial}{\partial y^j}(\tilde f(x+\lambda y))(\eps^j \tilde h(x,y)).
\end{align*}
For each $\lambda$ the operator $\bQ$ thus commutes with the representation $\sigma_\lambda$ modulo bounded operators on a dense subalgebra of $C(T)$. Hence for each $\lambda$ $(C(T,L^2(\ft)\tensor \cS),\sigma_\lambda,\bQ)$ defines an unbounded Kasparov triple.

This is true in particular for $\lambda=1$ and thus $(C(T,L^2(\ft)\tensor \cS),\sigma,\bQ)$ is a Kasparov triple so as the operator in the triple $\fb$ is zero while $\bQ$ is a connection for $Q_0$ it follows that $\fb\otimes_{C(T^\vee)} \fa=(C(T,L^2(\ft)\tensor \cS),\sigma,\bQ)$ in $KK_W(C(T),C(T))$.

Now applying the homotopy we have $\fb\otimes_{C(T^\vee)} \fa=(C(T,L^2(\ft)\tensor \cS),\sigma_0,\bQ)$. Since $\sigma_0$ commutes exactly with the operator $\bQ$ the representation$\sigma_0$ respects the direct sum decomposition of $C(T,L^2(\ft)\tensor \cS)$ as $C(T,\ker(Q_0))\oplus C(T,\ker(Q_0)^\perp)$. The operator $\bQ$ is invertible on the second summand (and commutes with the representation) and hence the corresponding Kasparov triple $(C(T,\ker(Q_0)^\perp),\sigma_0|_{C(T,\ker(Q_0)^\perp)},\bQ|_{C(T,\ker(Q_0)^\perp)})$ is zero in $KK$-theory.

We thus conclude that $\fb\otimes_{C(T^\vee)} \fa=(C(T,\ker(Q_0)),\sigma_0|_{C(T,\ker(Q_0))},0)$. Since $\ker Q_0$ is 1-dimensional (Theorem \ref{ladder}) the module $C(T,\ker(Q_0))$ is isomorphic to $C(T)$ and the restriction of $\sigma_0$ to this is the identity representation of $C(T)$ on itself.  Thus $\fb\otimes_{C(T^\vee)} \fa=(C(T),1,0)=1_{C(T)}$.
\end{proof}

\subsection{The Kasparov product $\fb\otimes_{C(T)}\fa$}

We begin by considering the Langlands dual picture, which exchanges the roles of $T$ and $T^\vee$. There exist elements $\fa^\vee \in KK_W(C(T)\tensor C(T^\vee),\C)$ and $\fb^\vee\in KK_W(\C,C(T^\vee)\tensor C(T))$ for which the result of the previous section implies $\fb^\vee\otimes_{C(T)}\fa^\vee=1_{C(T^\vee)}$ in $KK_W(C(T^\vee),C(T^\vee))$.

We will show that there is an isomorphism $\theta:C(T^\vee)\tensor C(T)\to C(T)\tensor C(T^\vee)$ such that $\fa=\theta^*\fa^\vee$ and $\fb=\theta^{-1}_*\fb^\vee$. This will imply that $\fb\otimes_{C(T)}\fa=\fb^\vee\otimes_{C(T)}\fa^\vee=1_{C(T^\vee)}$ in $KK_W(C(T^\vee),C(T^\vee))$ and hence will complete the proof of the Poincar\'e duality between $C(T)$ and $C(T^\vee)$.

We recall that $\fa$ is represented by the unbounded Kasparov triple $(L^2(\ft)\tensor \cS,\rho,Q_0)$ where $\cS=\cl(\ft\times\ft^*)P$, for $P$ the projection $P=\prod_{j}\frac 12(1-i\e_j\eps^j)$ and
$$Q_0=\frac{\partial}{\partial y^j}\otimes \eps^j-2\pi i y^j\otimes \e_j.$$
For $\gamma\in \Gamma$, $\chi\in\Gamma^\vee$ and correspondingly $e^{2\pi i\la \eta,\gamma\ra}$ in $C(T^\vee)$, $e^{2\pi i\la \chi,x\ra }$ in $C(T)$, the representation $\rho$ of $C(T^\vee)\tensor C(T)$ is defined by
$$\rho(e^{2\pi i \la \eta,\gamma\ra})(\xi\otimes s)=L_\gamma\xi\otimes s, \text { and }\rho(e^{2\pi i\la \chi,x\ra})(\xi\otimes s)=e^{2\pi i\la \chi,x\ra}\xi \otimes s.$$

By definition $\fa^\vee$ is represented by the triple $(L^2(\ft^*)\tensor \cS^\vee,\rho^\vee,Q_0^\vee)$ where $\cS^\vee=\cl(\ft^*\times\ft)P^\vee$, for $P^\vee$ the projection $P^\vee=\prod_{j}\frac 12(1-i\eps^j\e_j)$ and
$$Q_0^\vee=\frac{\partial}{\partial \eta_j}\otimes \e_j-2\pi i \eta_j\otimes \eps^j.$$
For $\gamma\in \Gamma$, $\chi\in\Gamma^\vee$ and correspondingly $e^{2\pi i\la \eta,\gamma\ra}$ in $C(T^\vee)$, $e^{2\pi i\la \chi,x\ra }$ in $C(T)$, the representation $\rho^\vee$ of $C(T)\tensor C(T^\vee)$ is now defined by
$$\rho^\vee(e^{2\pi i\la \chi,x\ra})(\xi^\vee\otimes s^\vee)=L^\vee_\chi\xi^\vee \otimes s^\vee, \text { and }\rho^\vee(e^{2\pi i \la \eta,\gamma\ra})(\xi^\vee\otimes s^\vee)=e^{2\pi i \la \eta,\gamma\ra}\xi^\vee\otimes s^\vee.$$
Here $L^\vee_\chi$ denotes the translation action of $\chi\in\Gamma^\vee$ on $L^2(\ft^*)$.

In our notation, $\eps^j$ is again an orthonormal basis for $\ft^*$ and $\e_j$ is an orthonormal basis for $\ft$. We can canonically identify $\cl(\ft\times\ft^*)$ with $\cl(\ft^*\times\ft)$, and hence think of both $\cS$ and $\cS^\vee$ as subspaces of this algebra.

\bigskip

We can identify $L^2(\ft)$ with $L^2(\ft^*)$ via the Fourier transform: let $\cF:L^2(\ft)\to L^2(\ft^*)$ denote the Fourier transform isomorphism
$$[\cF\xi](\eta)=\int_\ft \xi(y)e^{2\pi i\la \eta,y\ra}\,dy.$$
It is easy to see that this is $W$-equivariant.

To identify $\cS$ with $\cS^\vee$, let $u\in \cl(\ft\times\ft^*)$ be defined by $u=\eps^1\eps^2\dots\eps^n$ when $n=\dim(\ft)$ is even and $u=e_1e_2\dots e_n$ when $n$ is odd.

\begin{lemma}
Conjugation by $u$ defines a $W$-equivariant unitary isomorphism $\cU:\cS\to\cS^\vee$. For $a\in\cl(\ft\times\ft^*)$ (viewed as an operator on $\cS$ by Clifford multiplication) $\cU a\cU^*$ is Clifford multiplication by $uau^*$ on $\cS^\vee$ and in particular $\cU \e_j\cU^*=\e_j$ while $\cU \eps^j\cU^*=-\eps^j$.
\end{lemma}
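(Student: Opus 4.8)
The plan is to verify directly that conjugation by $u$, written $\cU(x)=uxu^*$, carries the projection $P$ to $P^\vee$; since $u$ is unitary this makes $\cU$ an automorphism of $\cl(\ft\times\ft^*)$ restricting to a bijection $\cS=\cl(\ft\times\ft^*)P\to\cl(\ft\times\ft^*)P^\vee=\cS^\vee$, and the remaining claims (unitarity, the intertwining property, $W$-equivariance) then follow by elementary sign bookkeeping. First I would record that $u$ is unitary: its adjoint is the product of the same generators in reverse order, and since each generator is self-adjoint with square $1$, successive cancellation gives $uu^*=u^*u=1$.

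Next I would compute the conjugation action of $u$ on the generators. Conjugation by a unit vector $v$ fixes $v$ and negates its orthogonal complement, so $\cU$ is the composite of the $n$ corresponding reflections; a parity count, carried out separately in the case $n$ even (where $u$ is a product of the $\eps^j$) and $n$ odd (where $u$ is a product of the $\e_j$), shows in both cases that
\[
u\e_j u^*=\e_j,\qquad u\eps^j u^*=-\eps^j,
\]
which is the last assertion of the lemma. Since $\cU$ is an algebra automorphism, applying it to $P=\prod_j\tfrac12(1-i\e_j\eps^j)$ and using $\eps^j\e_j=-\e_j\eps^j$ gives
\[
uPu^*=\prod_j\tfrac12\bigl(1+i\e_j\eps^j\bigr)=\prod_j\tfrac12\bigl(1-i\eps^j\e_j\bigr)=P^\vee.
\]
Hence $\cU$ restricts to a bijection $\cS\to\cS^\vee$ with inverse $y\mapsto u^*yu$, and the intertwining property is immediate: for $a\in\cl(\ft\times\ft^*)$ and $x\in\cS$, $\cU(a\cdot x)=u a u^*\cdot\cU(x)$, so $\cU a\cU^*$ is Clifford multiplication by $uau^*$ on $\cS^\vee$, and specialising $a=\e_j,\eps^j$ recovers the displayed signs. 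For unitarity, using $(uau^*)^*=ua^*u^*$ together with $P^\vee u=uP$ and $u^*P^\vee=Pu^*$, one computes
\[
\langle\cU(aP),\cU(bP)\rangle=P^\vee\,ua^*bu^*\,P^\vee=u\,(Pa^*bP)\,u^*,
\]
and since $Pa^*bP=cP$ for a scalar $c$ while $u(cP)u^*=cP^\vee$, this equals $c=\langle aP,bP\rangle$ under the identifications $\C P\cong\C\cong\C P^\vee$.

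The one point needing care is $W$-equivariance, because $u$ is \emph{not} $W$-invariant in general: as $W$ acts on $\ft$, and dually on $\ft^*$, by isometries, and $u$ spans the top graded component of $\cl(\ft^*)$ (resp.\ $\cl(\ft)$), we have $w\cdot u=\det(w)\,u$ and $w\cdot u^*=\det(w)\,u^*$ with $\det(w)=\det(w|_{\ft})=\pm1$. But $u$ occurs twice in $\cU$, so this sign squares away: using that $P$ and $P^\vee$ are $W$-invariant (both are symmetric elements in the sense of Proposition \ref{invariance of symmetric elements}, so that $w\cdot(aP)=(w\cdot a)P$ and $w\cdot P^\vee=P^\vee$),
\[
w\cdot\cU(aP)=(w\cdot u)(w\cdot a)(w\cdot u^*)(w\cdot P^\vee)=\det(w)^2\,u\,(w\cdot a)\,u^*P^\vee=\cU\bigl(w\cdot(aP)\bigr).
\]
Thus $\cU$ is a $W$-equivariant unitary with the asserted properties. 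The only genuine obstacle is the bookkeeping: keeping the Clifford anticommutation signs straight through the two parity cases in the conjugation computation, and noticing that the failure of $W$-invariance of $u$ is harmless precisely because $u$ enters $\cU$ symmetrically on the two sides.
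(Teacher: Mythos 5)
Your proposal is correct and follows essentially the same route as the paper: parity count to show $u\e_ju^*=\e_j$, $u\eps^ju^*=-\eps^j$, deducing $uPu^*=P^\vee$; unitarity via conjugating the scalar $Pa^*bP$; $W$-equivariance from $w\cdot u=\det(w)\,u$ with $\det(w)^2=1$; and the intertwining property from $\cU$ being an algebra automorphism. One small remark: conjugation by a unit vector $v$ in a Clifford algebra, $w\mapsto vwv^{-1}$, fixes $v$ and negates its orthogonal complement — this is the \emph{negative} of the usual reflection in the hyperplane $v^\perp$ — so the phrase ``the composite of the $n$ corresponding reflections'' is slightly loose, but the parity count you carry out is exactly right and unaffected.
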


\begin{proof}
We first note that $u$ respectively commutes and anticommutes with $\e_j$, $\eps^j$ (there being respectively an even or odd number of terms in $u$ which anticommute with $\e_j$, $\eps^j$). It follows that $uPu^*=P^\vee$, hence conjugation by $u$ maps $\cS$ to $\cS^\vee$.

Denoting by $\pi:\C P\to \C$ the identification of $\C P$ with $\C$, the inner product on $\cS$ is given by $\la s_1,s_2\ra= \pi(s_1^*s_2)$ while the inner product on $\cS^\vee$ is given by $\la s_1^\vee,s_2^\vee\ra= \pi(u^*(s_1^\vee)^*s_2^\vee u)$. Thus
$$\la usu^*,s^\vee\ra=\pi(u^*(usu^*)^*s^\vee u)=\pi(s^*u^*s^\vee u)=\la s,u^*s^\vee u\ra$$so $\cU^*$ is conjugation by $u^*$ which inverts $\cU$ establishing that $\cU$ is unitary.

We now check that $\cU$ is $W$-equivariant. In the case that $\ft$ is even-dimensional, we note that identifying $\cl(\ft^*)$ with the exterior algebra of $\ft^*$ (as a $W$-vector space), $u$ corresponds to the volume form on $\ft^*$ so $w\cdot u=\det (w)u$. Similarly in the odd dimensional case $u$ corresponds to the volume form on $\ft$ and again the action of $w$ on $u$ is multiplication by the determinant. Thus
$$w\cdot\cU(s)=w\cdot (usu^*)=(w\cdot u)(w\cdot s)(w\cdot u^*)=\det(w)^2\,u(w\cdot s)u^*=\cU(w\cdot s)$$
since $\det(w)=\pm1$.

Finally for $s^\vee\in\cS^\vee$ and $a\in\cl(\ft\times\ft^*)$ we have
$$\cU a\cU^*s^\vee=\cU(au^*s^\vee u)=uau^*s^\vee$$
and hence $\cU \e_j\cU^*=u\e_ju^*=\e_j$, $\cU \eps^j\cU^*=u\eps^ju^*=-\eps^j$.
\end{proof}

Since $\cF\otimes\cU$ is a $W$-equivariant unitary isomorphism from $L^2(\ft)\tensor \cS$ to $L^2(\ft^*)\tensor \cS^\vee$, the triple $(L^2(\ft)\tensor \cS,\rho,Q_0)$ representing $\fa$ is isomorphic to the Kasparov triple
$$(L^2(\ft^*)\tensor \cS^\vee,(\cF\otimes \cU)\rho(\cF^*\otimes \cU^*),(\cF\otimes u)Q_0(\cF^*\otimes \cU^*)).$$

\begin{theorem}\label{theta^* fa^vee}
Let $\theta:C(T^\vee)\tensor C(T)\to C(T)\tensor C(T^\vee)$ be defined by
$$\theta(g\otimes f)=f\otimes (g\circ \epsilon).$$
where $\epsilon$ is the involution on $T^\vee$ defined by $\epsilon(\exp(\eta))=\exp(-\eta)$. Then $\fa=\theta^*\fa^\vee$ in $KK_W(C(T^\vee)\tensor C(T),\C)$.
\end{theorem}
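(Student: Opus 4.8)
The plan is to verify directly that the isomorphism $\theta$ pulls back the Kasparov triple representing $\fa^\vee$ to the (isomorphic form of the) triple representing $\fa$. Concretely, I have already observed that $\fa$ is represented by
$$(L^2(\ft^*)\tensor \cS^\vee,\ (\cF\otimes \cU)\rho(\cF^*\otimes \cU^*),\ (\cF\otimes u)Q_0(\cF^*\otimes \cU^*)),$$
so it suffices to show that this triple is unitarily equivalent (via the identity unitary, i.e.\ equal as a triple) to $(L^2(\ft^*)\tensor \cS^\vee,\rho^\vee\circ\theta,Q_0^\vee)$. This reduces to two computations: first, that the conjugated operator $(\cF\otimes \cU)Q_0(\cF^*\otimes \cU^*)$ equals $Q_0^\vee$; and second, that the conjugated representation $(\cF\otimes \cU)\rho(\cF^*\otimes \cU^*)$ equals $\rho^\vee\circ\theta$ on the generators $e^{2\pi i\la\eta,\gamma\ra}$ and $e^{2\pi i\la\chi,x\ra}$.

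For the operator: write $Q_0=p_j\otimes\eps^j + x_j'\otimes\e_j$ with $p_j=\frac{\partial}{\partial y^j}$ and $x_j'=-2\pi i y^j$ (using summation convention). Under the Fourier transform $\cF$, the multiplication operator $-2\pi i y^j$ becomes the differential operator $\frac{\partial}{\partial\eta_j}$ and the differential operator $\frac{\partial}{\partial y^j}$ becomes $-2\pi i \eta_j$ — this is the standard interchange of position and momentum, with a sign that I will need to track carefully. On the spinor side, conjugation by $u$ fixes each $\e_j$ and negates each $\eps^j$, by the Lemma. Combining, $(\cF\otimes \cU)(p_j\otimes\eps^j)(\cF^*\otimes\cU^*) = (-2\pi i\eta_j)\otimes(-\eps^j) = 2\pi i\eta_j\otimes\eps^j$ and $(\cF\otimes\cU)(x_j'\otimes\e_j)(\cF^*\otimes\cU^*) = \frac{\partial}{\partial\eta_j}\otimes\e_j$, so the sum is $\frac{\partial}{\partial\eta_j}\otimes\e_j + 2\pi i\eta_j\otimes\eps^j$. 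This does not literally match $Q_0^\vee = \frac{\partial}{\partial\eta_j}\otimes\e_j - 2\pi i\eta_j\otimes\eps^j$; the discrepancy in the sign of the second term is precisely what the involution $\epsilon$ (that is, $\eta\mapsto-\eta$) in the definition of $\theta$ is there to absorb. I would therefore reorganize: either conjugate additionally by the unitary $\eta\mapsto-\eta$ on $L^2(\ft^*)$, which changes the sign of $\eta_j$ (and which is exactly implemented on the algebra level by $g\mapsto g\circ\epsilon$), or equivalently keep $\epsilon$ built into $\theta$ and check the representation side matches. Getting these signs and the placement of $\epsilon$ consistent is the main obstacle; everything else is bookkeeping.

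For the representation: $\rho(e^{2\pi i\la\eta,\gamma\ra}) = L_\gamma\otimes 1$ acts on $L^2(\ft)$ by translation; under $\cF$ this becomes multiplication by $e^{2\pi i\la\eta,\gamma\ra}$ (the character evaluation), so $(\cF\otimes\cU)\rho(e^{2\pi i\la\eta,\gamma\ra})(\cF^*\otimes\cU^*)$ is multiplication by $e^{2\pi i\la\eta,\gamma\ra}$ on $L^2(\ft^*)$, which is $\rho^\vee(e^{2\pi i\la\eta,\gamma\ra})$ — and under $\theta$, $e^{2\pi i\la\eta,\gamma\ra}\in C(T^\vee)$ is sent to itself composed with $\epsilon$, giving the sign flip $\eta\mapsto-\eta$ that matches the sign adjustment forced on the operator side. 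Dually, $\rho(e^{2\pi i\la\chi,x\ra})$ is multiplication on $L^2(\ft)$; under $\cF$ this becomes translation $L^\vee_\chi$ on $L^2(\ft^*)$, matching $\rho^\vee(\theta(1\otimes e^{2\pi i\la\chi,x\ra})) = \rho^\vee(e^{2\pi i\la\chi,x\ra}\otimes 1) = L^\vee_\chi\otimes 1$. So the two representations agree on generators, hence everywhere by continuity and the universal property of $C^*_r(\Gamma)$, $C^*_r(\Gamma^\vee)$.

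Finally, once the operator and representation are matched, $W$-equivariance is already in hand: $\cF$ is $W$-equivariant (stated above), $\cU$ is $W$-equivariant (the Lemma), $\theta$ is visibly $W$-equivariant since swapping tensor factors and precomposing with $\epsilon$ both commute with the $W$-action (the action on $\ft^*$ being the dual action, and $\epsilon$ being $W$-equivariant because $W$ acts linearly), and the gradings correspond. Therefore the isomorphic triple representing $\fa$ coincides with the triple $\rho^\vee\circ\theta$ pulled back from $\fa^\vee$, i.e.\ $\fa = \theta^*\fa^\vee$ in $KK_W(C(T^\vee)\tensor C(T),\C)$. I would present the Fourier-transform interchange of $p_j$ and $x_j'$, and the conjugation formula for $u$, as the two lemmas doing the work, and spend the bulk of the written proof making the sign conventions airtight.
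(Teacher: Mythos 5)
Your overall strategy is exactly the paper's: conjugate the triple for $\fa$ by $\cF\otimes\cU$ and compare the result term by term with the triple $(L^2(\ft^*)\tensor\cS^\vee,\rho^\vee\circ\theta,Q_0^\vee)$ representing $\theta^*\fa^\vee$. The gap is in the sign bookkeeping, and it is not the minor loose end you suggest but something that currently breaks the argument in two places.

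First, your two Fourier-transform identities are internally inconsistent. With the stated convention $\cF\xi(\eta)=\int\xi(y)e^{2\pi i\la\eta,y\ra}\,dy$, one finds $\cF\,\tfrac{\partial}{\partial y^j}\,\cF^*=-2\pi i\eta_j$ and $\cF(-2\pi i y^j)\cF^*=-\tfrac{\partial}{\partial\eta_j}$ (both with the same sign); with the opposite exponent one gets $+2\pi i\eta_j$ and $+\tfrac{\partial}{\partial\eta_j}$. You have taken one identity from each convention, which is why you obtained the hybrid $\tfrac{\partial}{\partial\eta_j}\otimes\e_j+2\pi i\eta_j\otimes\eps^j$ with only one wrong sign. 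Done consistently, the conjugated operator is either $Q_0^\vee$ on the nose (the convention the paper's computation tacitly uses) or $-Q_0^\vee$; there is no residual one-term sign error for anything to absorb.

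Second, and more seriously, the claim that the sign mismatch in the operator ``is precisely what the involution $\epsilon$ in the definition of $\theta$ is there to absorb'' is a conceptual error. The pullback $\theta^*\fa^\vee$ is by definition the cycle $(L^2(\ft^*)\tensor\cS^\vee,\rho^\vee\circ\theta,Q_0^\vee)$: only the representation is precomposed with $\theta$, and the operator $Q_0^\vee$ is left completely untouched. No choice of $\theta$ can correct a discrepancy in the operator computation. In the paper's proof the operator matches $Q_0^\vee$ exactly, and $\epsilon$ enters solely because $\cF L_\gamma\cF^*$ turns out to be multiplication by $e^{-2\pi i\la\eta,\gamma\ra}$, i.e.\ the representation of $C(T^\vee)$ picks up $\eta\mapsto-\eta$; this and only this is what forces the composition with $\epsilon$ in $\theta$. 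Your alternative of conjugating additionally by $S:\eta\mapsto-\eta$ also does not rescue the computation as you wrote it, because $S$-conjugation negates both $\eta_j$ and $\tfrac{\partial}{\partial\eta_j}$, so it flips both terms, not the single term your hybrid is off by. To repair the proof, redo the FT identities with a single consistent sign, observe that $(\cF\otimes\cU)Q_0(\cF^*\otimes\cU^*)=\pm Q_0^\vee$ exactly, and then read off from $\cF L_\gamma\cF^*$ and $\cF\,e^{2\pi i\la\chi,\,\cdot\,\ra}\,\cF^*$ which signs the two halves of the representation inherit; it is precisely this that determines the form of $\theta$ (and in particular where $\epsilon$ goes). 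As the paper does, the cleanest route is to pick the convention for which the operator matches $Q_0^\vee$ with no extra conjugation, and then only the $C(T^\vee)$-representation requires $\epsilon$.
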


\begin{proof}
We will show that $\rho^\vee\circ\theta=(\cF\otimes \cU)\rho(\cF^*\otimes \cU^*)$ and $(\cF\otimes u)Q_0(\cF^*\otimes \cU^*)=Q_0^\vee$. We begin with the operator.

The operator $Q_0$ is given by
$$\frac{\partial}{\partial y^j}\otimes \eps^j-2\pi i y^j\otimes \e_j.$$
Conjugating the operator $\frac{\partial}{\partial y^j}$ by the Fourier transform we obtain the multiplication by $2\pi i\eta_j$, while conjugating $-2\pi i y^j$ by the Fourier transform we obtain the multiplication by $-2\pi i(\frac i{2\pi}\frac{\partial}{\partial \eta_j})=\frac{\partial}{\partial \eta_j}$. Conjugation by $\cU$ negates $\eps^j$ and preserves $\e_j$ hence
$$(\cF\otimes u)Q_0(\cF^*\otimes \cU^*)=2\pi i \eta_j\otimes (-\eps^j)+\frac{\partial}{\partial \eta_j}\otimes \e_j=Q_0^\vee.$$
For trhe representation, $\rho(e^{2\pi i\la \chi,x\ra})$ is multiplication by $e^{2\pi i\la \chi,x\ra}$ on $L^2(\ft)$ (with the identity on $\cS$) and conjugating by the Fourier transform we get the translation $L^\vee_\chi$, hence $(\cF\otimes \cU)\rho(e^{2\pi i\la \chi,x\ra})(\cF^*\otimes \cU^*)=\rho^\vee(e^{2\pi i\la \chi,x\ra})$.  On the other hand $\rho(e^{2\pi i \la \eta,\gamma\ra})$ is the translation $L_\gamma$ and Fourier transforming we get the multiplication by $e^{-2\pi i \la \eta,\gamma\ra}$. Thus  $(\cF\otimes \cU)\rho(e^{2\pi i \la \eta,\gamma\ra})(\cF^*\otimes \cU^*)=\rho^\vee(e^{2\pi i \la -\eta,\gamma\ra})$.

We conclude that $(\cF\otimes \cU)\rho(\cF^*\otimes \cU^*)=\rho^\vee\circ\,\theta$ as required.
\end{proof}

\begin{theorem}\label{second Kasparov product}
The Kasparov product $\fb\otimes_{C(T)}\fa$ is $1_{C(T^\vee)}$ in the Kasparov group $KK_W(C(T^\vee),C(T^\vee))$. Hence the elements $\fa\in KK_W(C(T^\vee)\tensor C(T),\C)$ and $\fb\in KK_W(\C,C(T)\tensor C(T^\vee))$ exhibit a $W$-equivariant Poincar\'e duality between the algebras $C(T)$ and $C(T^\vee)$.
\end{theorem}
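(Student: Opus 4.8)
The plan is to avoid a direct computation of $\fb\otimes_{C(T)}\fa$ and instead deduce it from the computation of the previous subsection applied to the Langlands dual group. Concretely, the construction of $\fa$ and $\fb$ uses only the data $T,T^\vee,\ft,\ft^*,\Gamma,\Gamma^\vee,W$, all of which are symmetric under interchanging $T$ with $T^\vee$ (since $(T^\vee)^\vee\simeq T$ as $W$-spaces), so Theorem \ref{first Kasparov product}, applied verbatim with $G$ replaced by $G^\vee$, produces elements $\fb^\vee\in KK_W(\C,C(T^\vee)\tensor C(T))$ and $\fa^\vee\in KK_W(C(T)\tensor C(T^\vee),\C)$ with $\fb^\vee\otimes_{C(T)}\fa^\vee = 1_{C(T^\vee)}$. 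Theorem \ref{theta^* fa^vee} already gives $\fa=\theta^*\fa^\vee$, so the two remaining steps are: (i) the companion identity $\fb=\theta_*\fb^\vee$, and (ii) a naturality argument transporting the dual computation across $\theta$, followed by combining the result with Theorem \ref{first Kasparov product}.

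For step (i) I would mimic the comparison of $\fa$ with $\fa^\vee$, but now only the Fourier transform $\cF$ is required, since there is no Clifford factor in the module $\cE$. Writing the $C(T)\tensor C(T^\vee)$-valued inner product on $C_c(\ft)$ as $\la\phi_1,\phi_2\ra(x,\eta)=\overline{\Phi_1(x,\eta)}\,\Phi_2(x,\eta)$ with $\Phi_i(x,\eta)=\sum_{\alpha\in\Gamma}\phi_i(x-\alpha)e^{2\pi i\la\eta,\alpha\ra}$, Poisson summation over the lattice $\Gamma$ expresses $\Phi_\phi$ in terms of the corresponding series $\Psi_{\cF\phi}$ for the dual module, up to a unit-modulus factor which cancels in the inner product and up to the substitution $\eta\mapsto-\eta$; this substitution is precisely the involution $\epsilon$ built into $\theta$. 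Moreover $\cF$ interchanges the two module actions on $\cE$ (multiplication by $\Gamma$-periodic functions $\leftrightarrow$ translation by $\Gamma^\vee$, and conversely), compatibly with $\theta$. Since $\cF$ is $W$-equivariant and both Kasparov triples carry the zero operator and the trivial grading, this exhibits a $W$-equivariant isomorphism of the modules representing $\fb$ and $\theta_*\fb^\vee$, whence $\fb=\theta_*\fb^\vee$. The \emph{main obstacle} is precisely this step: one must check carefully that $\cF$ extends to an isometric surjection of the completed Hilbert modules (it is convenient to work with $\phi\in C^\infty_c(\ft)$, whose Fourier transforms decay rapidly) and that it intertwines the $C(T)\otimes\C[\Gamma]$- and $C(T^\vee)\otimes\C[\Gamma^\vee]$-module structures through $\theta$; no new idea is needed, only bookkeeping.

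With (i) in hand, step (ii) is formal: $\theta$ restricts to the canonical identification of the contracted factor $C(T)$ and acts on the remaining factor $C(T^\vee)$ by the automorphism $\epsilon^*$, so by naturality of the Kasparov product
$$\fb\otimes_{C(T)}\fa=(\theta_*\fb^\vee)\otimes_{C(T)}(\theta^*\fa^\vee)=\fb^\vee\otimes_{C(T)}\fa^\vee=1_{C(T^\vee)},$$
the middle equality using that transporting the identity class $1_{C(T^\vee)}$ simultaneously through $\epsilon^*$ on its input and output algebras returns $1_{C(T^\vee)}$, and the last equality being the dual form of Theorem \ref{first Kasparov product} recorded above. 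Finally, combining $\fb\otimes_{C(T)}\fa=1_{C(T^\vee)}$ with $\fb\otimes_{C(T^\vee)}\fa=1_{C(T)}$ from Theorem \ref{first Kasparov product} shows that the pair $(\fa,\fb)$ satisfies both relations in the definition of Poincar\'e duality for the $W$-$C^*$-algebras $A=C(T)$ and $B=C(T^\vee)$, which is the remaining assertion of the theorem.
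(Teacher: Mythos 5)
Your strategy is correct and reaches the right conclusion, but you take a genuinely different route from the paper for the key intermediate step. Both proofs reduce to showing that $\fb$ and $\fb^\vee$ are related by $\theta$, and then combining the dual form of Theorem \ref{first Kasparov product} with Theorem \ref{theta^* fa^vee}. Where you differ is in \emph{how} you establish the identity between $\fb$ and the pushforward of $\fb^\vee$. You construct a direct, $W$-equivariant isomorphism of Hilbert modules from $\cE$ to (the $\theta$-twist of) $\cE^\vee$ by applying the Fourier transform $\cF$ to $C_c^\infty(\ft)$ and invoking Poisson summation over $\Gamma$; this requires the (correctly flagged) bookkeeping that $\cF$ extends isometrically to the completed modules, that the unit-modulus phase cancels in the inner product, and that the $\eta\mapsto-\eta$ substitution matches the involution $\epsilon$ in $\theta$. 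The paper avoids all of this analysis and instead deduces $\fb = \fb'$ (with $\fb'$ the $\theta$-transport of $\fb^\vee$) purely formally: writing $\fb = \fb \otimes_{C(T^\vee)} (\fb'\otimes_{C(T)}\fa)$ and $\fb' = \fb'\otimes_{C(T)}(\fb\otimes_{C(T^\vee)}\fa)$, it appeals to associativity and commutativity of the external Kasparov product to identify the two. Your approach is more concrete — it exhibits the duality between $\cE$ and $\cE^\vee$ explicitly as Fourier--Poisson duality, which is conceptually illuminating — at the cost of verifying that $\cF$ really does implement a unitary of Hilbert modules; the paper's is slicker and entirely abstract, requiring only the already-proved Theorems \ref{first Kasparov product} and \ref{theta^* fa^vee} plus general properties of the Kasparov product. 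Both are valid.

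One small notational remark: you write $\fb = \theta_*\fb^\vee$, while the paper writes $\fb' = (\theta^{-1})_*\fb^\vee$; given that $\theta\colon C(T^\vee)\tensor C(T)\to C(T)\tensor C(T^\vee)$ and $\fb^\vee\in KK_W(\C, C(T^\vee)\tensor C(T))$, your $\theta_*$ is the operation that typechecks, and the two intend the same map.
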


\begin{proof}
We have $\fb\otimes_{C(T^\vee)}\fa=1_{C(T)}$ in $KK_W(C(T),C(T))$ by Theorem \ref{first Kasparov product} while  $\fb^\vee\otimes_{C(T)}\fa^\vee=1_{C(T^\vee)}$ in $KK_W(C(T^\vee),C(T^\vee))$ by  Theorem \ref{first Kasparov product} for the dual group.

By Theorem \ref{theta^* fa^vee} we have $\fa^\vee=(\theta^{-1})^*\fa$ whence
$$1_{C(T^\vee)}=\fb^\vee\otimes_{C(T)}\fa^\vee=(\theta^{-1})_*\fb^\vee\otimes_{C(T)} \fa.$$
Let $\fb'=(\theta^{-1})_*\fb^\vee$ in $KK_W(\C,C(T)\tensor C(T^\vee))$. Then
$$\fb=\fb\otimes_{C(T^\vee)}1_{C(T^\vee)}=\fb\otimes_{C(T^\vee)}(\fb'\otimes_{C(T)} \fa).$$
By definition $\fb'\otimes_{C(T)} \fa=(\fb'\otimes 1_{C(T^\vee)})\otimes_{_{\scriptstyle C(T)\otimes C(T^\vee)}} \fa$ \,and hence 
$$\fb=(\fb\otimes\fb')\otimes_{_{\scriptstyle C(T^\vee)\tensor C(T)}} \fa$$
by associativity of the Kasparov product. Here $\fb\otimes\fb'$ is the `external' product and lives in $KK_W(\C,C(T)\tensor C(T)\tensor C(T^\vee)\tensor C(T^\vee))$, with $\fb$ appearing in the first and last factors, and $\fb'$ in the second and third. The product with $\fa$ is over the second and last factors. Similarly
$$\fb'=\fb'\otimes_{C(T)}(\fb\otimes_{C(T^\vee)} \fa)=(\fb'\otimes\fb)\otimes_{_{\scriptstyle C(T)\tensor C(T^\vee)}} \fa$$
where $\fb'$ now appears as the first and last factors and the product with $\fa$ is over the first and third factors. Up to reordering terms of the tensor product $(\fb\otimes\fb')\otimes_{_{\scriptstyle C(T^\vee)\tensor C(T)}} \fa=(\fb'\otimes\fb)\otimes_{_{\scriptstyle C(T)\tensor C(T^\vee)}} \fa$.

Thus (by commutativity of the external product) $\fb=\fb'=(\theta^{-1})_*\fb^\vee$ and hence $\fb\otimes_{C(T)}\fa=1_{C(T^\vee)}$. This completes the proof.
\end{proof}

\begin{corollary}\label{PD}
The Kasparov product with $\fb=(\cE,1,0)$ induces an isomorphism from $KK_W(C(T),\C)$ to $KK_W(\C,C(T^\vee))$.
\end{corollary}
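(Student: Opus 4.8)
The plan is to deduce the corollary purely formally from Theorem \ref{second Kasparov product} together with the general Poincar\'e duality formalism recalled at the beginning of Section \ref{poincare section}; no further analysis is required.

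Concretely, I would set $A = C(T)$ and $B = C(T^\vee)$ and take as candidate mutually inverse maps
\[
\Phi\colon KK_W(A,\C)\longrightarrow KK_W(\C,B),\qquad \fx\mapsto \fb\otimes_A\fx,
\]
\[
\Psi\colon KK_W(\C,B)\longrightarrow KK_W(A,\C),\qquad \fy\mapsto \fy\otimes_B\fa.
\]
The first step is to observe that these are well defined: $A$ and $B$ are separable and nuclear, so the Kasparov products over $A$ and over $B$ exist in the $W$-equivariant category and are associative, and $\Phi$ is precisely the map ``Kasparov product with $\fb$'' of the corollary.

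The second step is the verification that $\Psi\circ\Phi = \mathrm{id}$ and $\Phi\circ\Psi = \mathrm{id}$. For the first, I would use associativity to reassociate
\[
\Psi(\Phi(\fx)) = (\fb\otimes_A\fx)\otimes_B\fa = (\fb\otimes_B\fa)\otimes_A\fx,
\]
the point being that the contraction of $\fx$ (over $A$) and the contraction of $\fa$ (over $B$) involve the two distinct tensor legs of $\fb\in KK_W(\C,A\tensor B)$, so their order may be interchanged; then $\fb\otimes_B\fa = 1_A$ by Theorem \ref{first Kasparov product}, whence $\Psi(\Phi(\fx)) = 1_A\otimes_A\fx = \fx$. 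Symmetrically,
\[
\Phi(\Psi(\fy)) = \fb\otimes_A(\fy\otimes_B\fa) = \fy\otimes_B(\fb\otimes_A\fa) = \fy\otimes_B 1_B = \fy,
\]
using $\fb\otimes_A\fa = 1_B$ from Theorem \ref{second Kasparov product}. This shows $\Phi$ is an isomorphism, which is the assertion of the corollary.

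The only delicate point is the bookkeeping in the reassociation step --- keeping track of which copy of $C(T)$ and which copy of $C(T^\vee)$ each Kasparov product is taken over, since $\fb$ and $\fa$ each involve one copy of each algebra. This is exactly the kind of manipulation (together with commutativity of the external product) already carried out at the end of the proof of Theorem \ref{second Kasparov product}, so I expect no genuine obstacle here; the real substance of the Poincar\'e duality lies in Theorems \ref{ladder}, \ref{first Kasparov product} and \ref{second Kasparov product}, and the present corollary is simply their formal packaging.
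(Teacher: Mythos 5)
Your proposal is correct and follows exactly the route the paper intends: the corollary is stated without proof because it is a formal consequence of the Poincar\'e duality framework set out at the start of Section \ref{poincare section}, together with Theorems \ref{first Kasparov product} and \ref{second Kasparov product}, and your argument is precisely the spelled-out version of that formalism, with the reassociation $(\fb\otimes_A\fx)\otimes_B\fa = (\fb\otimes_B\fa)\otimes_A\fx$ (valid since $\fx$ and $\fa$ contract against distinct tensor legs of $\fb$) doing the work in both directions.
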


\section{Affine and Extended Affine Weyl groups}\label{affine Weyl section}

In this section we will give the precise definitions of the affine and extended affine Weyl groups of a compact connected semisimple Lie group. As noted earlier these are semidirect products of lattices in the Lie algebra $\ft$ of a maximal torus $T$ by the Weyl group $W$. The affine Weyl group $W_a$ is a Coxeter group while the extended affine Weyl group contains $W_a$ as a finite index normal subgroup.  The quotient $W'_a/W_a$ is the fundamental group of the Lie group $G$.

Recall that $\Gamma(T)$ is the kernel of the exponential map $\exp : \ft \to T$. Let $p:\widetilde G\to G$ denote the universal cover and let $\widetilde T$ be the preimage of $T$ which is a maximal torus in $\widetilde G$.
We consider the following commutative diagram.
$$\begin{CD}
@.\Gamma(\widetilde T)@>>>\ft @>>>\widetilde T@>>> 0\\
@. @VV\iota V @VV{\mathrm{id}}V @VV{p|_{\widetilde T}}V\\
0@>>>\Gamma(T)@>>>\ft @>>>T@.
\end{CD}$$
By the snake lemma the sequence
$$
\begin{matrix}
\ker(\mathrm{id})&\to& \ker(p|_{\widetilde T})&\to& \coker(\iota)&\to& \coker(\mathrm{id})\\
||&&||&&||&&||\\
0& &\pi_1(G)& & \Gamma(T)/\Gamma(\widetilde T)& &0
\end{matrix}$$
is exact, hence $\Gamma(T)/\Gamma(\widetilde T)$ is isomorphic to $\pi_1(G)$. We thus have a map from $\Gamma(T)$ onto $\pi_1(G)$.  The kernel of this map is denoted $N(G,T)$ however we have seen that this is the nodal lattice $\Gamma(\widetilde T)$ for the torus $\widetilde T$.

\begin{definition}
The \emph{affine Weyl group of $G$} is 
\[
W_a = W_a(G) = N(G,T) \rtimes W
\]
 and the \emph{extended
affine Weyl group of $G$} is 
\[
W'_a = W_a(G) = \Gamma(T) \rtimes W\]
where $W$ denotes the Weyl group of $G$.
\end{definition}

The following is now immediate.

\begin{lemma}\label{cover} Let $\widetilde{G}$ denote the universal cover of $G$ and let $\widetilde{T}$ denote
a maximal torus in $\widetilde{G}$.   Then we have
\[
W_a(G) = W'_a(\widetilde{G}) = W_a(\widetilde{G})
\]\qed
\end{lemma}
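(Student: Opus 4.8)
Let $\widetilde{G}$ denote the universal cover of $G$ and let $\widetilde{T}$ denote a maximal torus in $\widetilde{G}$. Then we have
$$W_a(G) = W'_a(\widetilde{G}) = W_a(\widetilde{G}).$$

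The plan is to unwind the definitions of the affine and extended affine Weyl groups and to invoke the identification $N(G,T) = \Gamma(\widetilde T)$ recorded just above the statement. First I would note that the covering $p\colon \widetilde G \to G$ restricts to a covering $p|_{\widetilde T}\colon \widetilde T \to T$, which is a local diffeomorphism; its differential identifies the Lie algebra of $\widetilde T$ with $\ft$, and under this identification the Weyl group $W = N(T)/T = N(\widetilde T)/\widetilde T$ together with its linear action on $\ft$ is the same whether computed inside $G$ or inside $\widetilde G$. Consequently all three semidirect products in the statement are formed with respect to one and the same action of $W$ on $\ft$, so it suffices to compare the underlying lattices.

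Next I would treat $\widetilde G$ itself: since $\widetilde G$ is its own universal cover, the snake-lemma construction preceding the lemma (applied with $G$ replaced by $\widetilde G$) produces a surjection $\Gamma(\widetilde T) \to \pi_1(\widetilde G)$, and $\pi_1(\widetilde G) = 0$ because $\widetilde G$ is simply connected. Hence its kernel $N(\widetilde G,\widetilde T)$ is all of $\Gamma(\widetilde T)$, and by the definitions $W_a(\widetilde G) = N(\widetilde G,\widetilde T)\rtimes W = \Gamma(\widetilde T)\rtimes W = W'_a(\widetilde G)$. Finally, for $G$ we have by definition $W_a(G) = N(G,T)\rtimes W$, and we have already observed that $N(G,T)$, the kernel of $\Gamma(T)\to\pi_1(G)$, is precisely the nodal lattice $\Gamma(\widetilde T)$ of $\widetilde T$. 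Combining, $W_a(G) = \Gamma(\widetilde T)\rtimes W = W_a(\widetilde G)$, which closes the chain of equalities.

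There is no genuine obstacle here; the only point deserving a moment's care is the claim that the $W$-action on $\Gamma(\widetilde T)$ appearing in $W_a(G)$ and the one appearing in $W_a(\widetilde G)$ literally coincide. This follows from the $W$-equivariance of $\exp$ and of the covering $\widetilde T\to T$, so that the identifications of lattices used above are automatically identifications of $W$-modules, and hence of semidirect products.
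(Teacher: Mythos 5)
Your proof is correct and is exactly the argument the paper has in mind: the paper declares the lemma ``now immediate'' following the snake-lemma identification $N(G,T)=\Gamma(\widetilde T)$, and you simply spell out the two resulting coincidences of lattices (using $\pi_1(\widetilde G)=0$ for the second equality) together with the compatibility of the $W$-actions. Nothing is missing and nothing differs from the paper's intended route.
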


We remark that in general the extended affine Weyl group $W_a'(G)$ is a split extension of $W_a(G)$ by $\pi_1(G)$ i.e.\ a semidirect product.

\section{Langlands Duality and $K$-theory}\label{the end section}

In this section we will consider the $K$-theory of the affine and extended affine Weyl groups of a compact connected semisimple Lie group. We will prove the main results stated in the introduction.

\subsection{The proof of Theorem \ref{the main theorem}}

We begin with the following elementary result which applies to a general semidirect product.

\begin{lemma}\label{semidirect=crossed}
Let $\Gamma \rtimes W$ be a semidirect product of discrete groups and let $A$ be a $\Gamma \rtimes W$-\;$C^*$-algebra. Then
$$A\rtimes_r(\Gamma\rtimes W) \cong (A\rtimes_r\Gamma)\rtimes_rW.$$
\end{lemma}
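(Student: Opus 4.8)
The plan is to establish the isomorphism directly at the level of convolution algebras and then argue that it extends to the reduced completions. First I would unwind the definitions: a dense subalgebra of $A \rtimes_r (\Gamma \rtimes W)$ is $C_c(\Gamma \rtimes W, A)$ with the twisted convolution product, and since $\Gamma \rtimes W$ is a discrete group with $\Gamma$ normal, every element can be written uniquely as a finite sum $\sum_{\gamma \in \Gamma, w \in W} a_{\gamma,w} [\gamma][w]$. On the other side, $(A \rtimes_r \Gamma) \rtimes_r W$ has a dense subalgebra consisting of finite sums $\sum_{w \in W} \big(\sum_{\gamma \in \Gamma} a_{\gamma,w}[\gamma]\big)[w]$, where the inner sum lies in a dense subalgebra of $A \rtimes_r \Gamma$ and $W$ acts on $A \rtimes_r \Gamma$ by the diagonal action (on $A$ via the given $W$-action, and on $\Gamma$ via conjugation inside $\Gamma \rtimes W$). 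The obvious bijection $[\gamma][w] \leftrightarrow ([\gamma])[w]$ respects both the $*$-structure and the product: this is a routine check using that $[w][\gamma][w]^{-1} = [w\gamma w^{-1}]$ and that the $W$-action on $A$ used to form the outer crossed product is exactly the restriction of the $\Gamma \rtimes W$-action.

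The substantive point, and the one I expect to be the main obstacle, is passing from this algebraic isomorphism to an isometric isomorphism of the \emph{reduced} $C^*$-completions. For this I would exhibit compatible faithful representations. Fix a faithful $\Gamma \rtimes W$-equivariant representation of $A$ on a Hilbert space $H$ (with the covariance data), and form the regular representation of $A \rtimes_r (\Gamma \rtimes W)$ on $\ell^2(\Gamma \rtimes W) \otimes H$. Using the canonical unitary identification $\ell^2(\Gamma \rtimes W) \cong \ell^2(W) \otimes \ell^2(\Gamma)$ coming from the bijection $\Gamma \rtimes W \cong W \times \Gamma$ (as sets), one checks that this regular representation is unitarily equivalent to the iterated regular representation: first induce $A$ up to a representation of $A \rtimes_r \Gamma$ on $\ell^2(\Gamma) \otimes H$, then induce that up to a representation of $(A \rtimes_r \Gamma) \rtimes_r W$ on $\ell^2(W) \otimes (\ell^2(\Gamma) \otimes H)$. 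The iterated regular representation is faithful on $(A \rtimes_r \Gamma) \rtimes_r W$ by definition of the reduced crossed product (applied twice), and the ordinary regular representation is faithful on $A \rtimes_r (\Gamma \rtimes W)$; since the algebraic isomorphism intertwines the two representations, it is isometric and extends to the desired $C^*$-isomorphism.

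The bookkeeping to watch is the role of the $W$-action on $\Gamma$: in the outer crossed product $(A \rtimes_r \Gamma) \rtimes_r W$ the group $W$ must act on $A \rtimes_r \Gamma$, and this action is the unique one extending the given action on $A$ and sending $[\gamma] \mapsto [w \cdot \gamma]$ where $w \cdot \gamma = w\gamma w^{-1}$ in $\Gamma \rtimes W$; one should note that this is a well-defined $*$-automorphism precisely because $\Gamma$ is normal and $A$ carries a $\Gamma \rtimes W$-action (so the covariance relations on the $A$-part are automatically compatible). With the representations set up as above, verifying the intertwining is a direct computation on the dense subalgebra, after which everything follows by continuity. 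I would present the proof in essentially the order: (i) algebraic identification of dense subalgebras; (ii) unitary identification of the two regular representations via $\ell^2(\Gamma \rtimes W) \cong \ell^2(W) \otimes \ell^2(\Gamma)$; (iii) faithfulness on both sides, hence isometry, hence the claimed $C^*$-isomorphism.
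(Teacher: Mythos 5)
Your proof is correct and follows essentially the same route as the paper: identify the dense convolution subalgebras via the obvious bijection $[\gamma][w] \leftrightarrow ([\gamma])[w]$, then observe that both reduced completions arise from representations on $H \otimes \ell^2(\Gamma) \otimes \ell^2(W)$ (with $A$ faithfully represented on $H$) and that the algebraic isomorphism intertwines them. The paper states this more tersely but the underlying argument is the one you give.
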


\begin{proof}
It is easy to see that the obvious map from the twisted group ring $A[\Gamma\rtimes W] \to (A\rtimes_r\Gamma)\rtimes_rW$ is a homomorphism of $*$-algebras with dense image. To verify that the completions are isomorphic one simply notes that both completions are defined by representing the algebras as operators on a Hilbert space $H\otimes \ell^2(\Gamma)\otimes \ell^2(W)$ where $A$ is faithfully represented on $H$.
\end{proof}

We will now consider the left-hand side of the assembly map. The following theorem generalises the familiar identification $KK^*_{\Gamma}(C_0(\underline E\Gamma),\C)\simeq K_*(B\Gamma)$ for a torsion-free group $\Gamma$.

\def\EGammaW{\underline E}
\begin{theorem}\label{LHS}
Let $\Gamma$ be a torsion-free group and $W$ a finite group acting by automorphisms on $\Gamma$. Let $Z$ be any proper cocompact $\Gamma\rtimes W$-space. Then
$$KK^*_{\Gamma\rtimes W}(C_0(Z),\C)\cong KK^*_W(C(Z/\Gamma),\C).$$
Taking the direct limit over all $\Gamma\rtimes W$-compact subspaces $Z$ of a universal example for proper actions $\EGammaW(\Gamma\rtimes W)$, the left-hand side of the Baum-Connes assembly map for $\Gamma\rtimes W$ is
$$\lim_{\to} KK^*_W(C(Z/\Gamma),\C)$$
and in particular if $B\Gamma=\EGammaW(\Gamma\rtimes W)/\Gamma$ is compact then the left-hand side of the assembly map is $KK^*_W(C(B\Gamma),\C)$.
\end{theorem}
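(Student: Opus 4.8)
The plan is to establish the isomorphism $KK^*_{\Gamma\rtimes W}(C_0(Z),\C)\cong KK^*_W(C(Z/\Gamma),\C)$ by an induction/restriction argument at the level of Hilbert modules and Kasparov cycles, and then deduce the direct-limit statement by passing to the colimit over $\Gamma\rtimes W$-cocompact subspaces of $\underline E(\Gamma\rtimes W)$. The key observation is that since $\Gamma$ acts freely on $Z$ (it is torsion-free and the action is proper, so every stabiliser is finite, hence trivial), the quotient map $Z\to Z/\Gamma$ is a principal $\Gamma$-bundle, and $C_0(Z)$ is, as a $\Gamma\rtimes W$-algebra, an induced algebra: $C_0(Z)\cong \mathrm{Ind}_W^{\Gamma\rtimes W} C(Z/\Gamma)$ in the appropriate sense. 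First I would make this precise: a $\Gamma\rtimes W$-equivariant Kasparov cycle $(H,\phi,F)$ for $(C_0(Z),\C)$ restricts along $W\hookrightarrow \Gamma\rtimes W$, but one wants to descend rather than restrict. The cleaner route is to use the Green--Julg / Packer--Raeburn type machinery, or more directly the fact that $KK^{\Gamma\rtimes W}_*(C_0(Z),\C)\cong KK^{\Gamma\rtimes W}_*(C_0(Z),\C)$ can be computed by a crossed-product identity together with the fact that $C_0(Z)\rtimes\Gamma$ is Morita equivalent (as a $W$-algebra) to $C(Z/\Gamma)$.

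Concretely, the main step is the chain of isomorphisms
$$KK^*_{\Gamma\rtimes W}(C_0(Z),\C)\cong KK^*_{W}(C_0(Z)\rtimes_r\Gamma,\,\C)\cong KK^*_W(C(Z/\Gamma),\C),$$
where the first isomorphism is a descent/Green--Julg statement: for the finite group $W$ acting on the $\Gamma$-$C^*$-algebra $C_0(Z)$, equivariant $KK$ for $\Gamma\rtimes W$ out of the $\Gamma\rtimes W$-algebra $C_0(Z)$ agrees with $W$-equivariant $KK$ out of $C_0(Z)\rtimes_r\Gamma$ — this is a routine consequence of Lemma \ref{semidirect=crossed} applied fibrewise together with the stabilisation/absorption arguments in the definition of equivariant $KK$ (one represents everything on $H\otimes\ell^2(\Gamma)$ and identifies the commutant data), and the second isomorphism is the $W$-equivariant Green imprimitivity theorem: since $\Gamma$ acts freely and properly on $Z$ with quotient $Z/\Gamma$, the crossed product $C_0(Z)\rtimes_r\Gamma$ is $W$-equivariantly Morita equivalent to $C(Z/\Gamma)$, hence induces an isomorphism on $KK_W$. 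I would state the freeness of the $\Gamma$-action explicitly (properness plus torsion-freeness), since this is exactly what makes the imprimitivity bimodule work and what forces all the finite isotropy of $\Gamma\rtimes W$ to land in $W$ after quotienting.

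For the direct-limit assertion, recall that the left-hand side of the Baum--Connes assembly map for $\Gamma\rtimes W$ is by definition $\lim_{\to} KK^*_{\Gamma\rtimes W}(C_0(Z),\C)$, the colimit taken over $\Gamma\rtimes W$-cocompact closed subspaces $Z$ of a universal proper $\Gamma\rtimes W$-space $\underline E(\Gamma\rtimes W)$. Applying the natural isomorphism just constructed to each such $Z$, and checking it is compatible with the inclusion maps $Z\hookrightarrow Z'$ (which it is, since Morita equivalence and the crossed-product identification are functorial in $Z$), we obtain $\lim_{\to} KK^*_W(C(Z/\Gamma),\C)$. Finally, if $\underline E(\Gamma\rtimes W)/\Gamma$ happens to admit a cocompact model, equivalently if $B\Gamma=\underline E(\Gamma\rtimes W)/\Gamma$ is compact, then the directed system stabilises and the colimit is just $KK^*_W(C(B\Gamma),\C)$. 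The main obstacle I anticipate is making the first isomorphism $KK^*_{\Gamma\rtimes W}(C_0(Z),\C)\cong KK^*_W(C_0(Z)\rtimes_r\Gamma,\C)$ fully rigorous at the level of cycles rather than just citing descent — one must check that a $\Gamma\rtimes W$-equivariant cycle, after inducing $\Gamma$ up and taking $W$-fixed structure, gives a genuine $W$-equivariant cycle for the crossed product, and that this is inverse to the obvious construction in the other direction, uniformly enough to respect the Kasparov product and the homotopy relation; everything else (freeness, imprimitivity, functoriality in $Z$, passage to the colimit) is standard.
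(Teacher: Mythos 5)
Your proposal follows essentially the same route as the paper: the core chain of isomorphisms, the use of Lemma \ref{semidirect=crossed}, the Green--Julg theorem, and the $W$-equivariant Morita equivalence $C_0(Z)\rtimes_r\Gamma \sim C(Z/\Gamma)$ coming from the free proper $\Gamma$-action (freeness from properness plus torsion-freeness, exactly as you say) are all identical, and the colimit step is the same. The one place you flag as a potential obstacle --- making $KK^*_{\Gamma\rtimes W}(C_0(Z),\C)\cong KK^*_W(C_0(Z)\rtimes_r\Gamma,\C)$ rigorous at the level of cycles --- is resolved in the paper without any cycle-level work by factoring through non-equivariant $KK$: Green--Julg (Blackadar 20.2.7(b)) gives $KK^*_{\Gamma\rtimes W}(C_0(Z),\C)\cong KK^*(C_0(Z)\rtimes_r(\Gamma\rtimes W),\C)$, Lemma \ref{semidirect=crossed} rewrites the crossed product as $(C_0(Z)\rtimes_r\Gamma)\rtimes_r W$, and Green--Julg applied in reverse for the finite group $W$ lands you at $KK^*_W(C_0(Z)\rtimes_r\Gamma,\C)$; there is nothing left to check. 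One small inaccuracy worth flagging: the motivating claim $C_0(Z)\cong\mathrm{Ind}_W^{\Gamma\rtimes W}C(Z/\Gamma)$ is false in general --- for the infinite dihedral group $\Z\rtimes\Z/2$ acting on $\R$ the induced algebra would be $C_0(\Z\times S^1)$, not $C_0(\R)$, and more generally a connected $Z$ admits no $\Gamma\rtimes W$-map to the discrete coset space $(\Gamma\rtimes W)/W$ --- but since you immediately abandon it for the crossed-product argument this does not affect the proof.
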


\begin{remark}\label{LHS remark}
For the extended affine Weyl group, $W_a'=\Gamma\rtimes W$, the Lie algebra $\ft$ of a maximal torus $T$ for $G$ provides a universal example for proper actions of $W_a'$. Thus the left-hand side of the Baum-Connes assembly map for $W_a'$ is
$$KK^*_{W_a'}(C_0(\ft),\C)\simeq KK^*_W(C(T),\C).$$
\end{remark}

\begin{proof}[Proof of Theorem \ref{LHS}]
By the Green-Julg theorem \cite[Theorem 20.2.7(b)]{Black} we have
$$KK^*_{\Gamma\rtimes W}(C_0(Z),\C)\simeq KK^*(C_0(Z)\rtimes_r (\Gamma\rtimes W),\C).$$
By Lemma \ref{semidirect=crossed} $C_0(Z)\rtimes_r (\Gamma\rtimes W)\simeq (C_0(Z)\rtimes_r \Gamma)\rtimes_r W$, hence applying the Green-Julg theorem again we have
$$KK^*(C_0(Z)\rtimes_r (\Gamma\rtimes W),\C)\simeq KK^*_W(C_0(Z)\rtimes_r \Gamma,\C).$$
Finally $C_0(Z)\rtimes_r \Gamma$ is $W$-equivariantly Morita equivalent to $C(Z/\Gamma)$, hence
$$KK^*_W(C_0(Z)\rtimes_r \Gamma,\C)\simeq KK^*_W(C(Z/\Gamma),\C).$$
The result now follows.
\end{proof}

We now move on to the right-hand side of the assembly map.

\begin{theorem}\label{RHS} Let $G$ be a compact connected semisimple Lie group with extended affine Weyl group $W_a'=\Gamma\rtimes W$.
\begin{enumerate}\renewcommand{\theenumi}{\alph{enumi}}
\item The group $C^*$-algebra $C^*_r(W'_a)$ is isomorphic to $(C(T^\vee)\otimes \cB(\ell^2(W)))^W$ where $T^\vee$ is a maximal torus of $G^\vee$, and $W$ acts diagonally on the tensor product.

\item The right-hand side of the Baum-Connes assembly map for $W_a'$, $KK^*(\C,C^*_r(W'_a))$, is isomorphic to $KK^*_W(\C,C(T^\vee))$.
\end{enumerate}
\end{theorem}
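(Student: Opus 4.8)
The plan is to deduce both parts from lemmas already established together with the Green--Julg theorem, so that essentially no new work is required. For part (a), the first step is to apply Lemma~\ref{semidirect=crossed} to the trivial $W_a'$-algebra $A=\C$, which gives
$$C^*_r(W_a')=\C\rtimes_r(\Gamma\rtimes W)\cong(\C\rtimes_r\Gamma)\rtimes_r W=C^*_r(\Gamma)\rtimes_r W,$$
where $W$ acts on $C^*_r(\Gamma)$ through its action on the nodal lattice $\Gamma=\Gamma(T)$. Next, Lemma~\ref{gammadual} identifies this $W$-$C^*$-algebra $W$-equivariantly with $C(T^\vee)$, so $C^*_r(W_a')\cong C(T^\vee)\rtimes_r W$. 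Finally, since $W$ is finite the reduced crossed product coincides with the full one, and Lemma~\ref{crossed product lemma} applied with $A=C(T^\vee)$ identifies $C(T^\vee)\rtimes_r W$ with $(C(T^\vee)\otimes\cB(\ell^2(W)))^W$, the $W$-action being diagonal; this is (a).

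For part (b) I would start from the isomorphism $C^*_r(W_a')\cong C(T^\vee)\rtimes_r W$ obtained above (so part (a) is not itself needed here), giving
$$KK^*(\C,C^*_r(W_a'))\cong KK^*(\C,C(T^\vee)\rtimes_r W)=K_*(C(T^\vee)\rtimes_r W).$$
The Green--Julg theorem for the finite, hence compact, group $W$ (cf.\ \cite[Theorem~20.2.7(a)]{Black}) then supplies a natural isomorphism $K_*(C(T^\vee)\rtimes W)\cong KK^*_W(\C,C(T^\vee))$, which is exactly the claimed description of the right-hand side of the assembly map. An alternative route is to combine (a) with the general identity $KK^*_W(\C,A)\cong KK^*(\C,(A\otimes\cB(\ell^2(W)))^W)$, itself a consequence of Lemma~\ref{crossed product lemma} and Green--Julg, but applying Green--Julg directly to $C(T^\vee)\rtimes W$ is cleanest.

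There is no genuinely hard step: the statement is a bookkeeping assembly of Lemmas~\ref{semidirect=crossed}, \ref{gammadual}, \ref{crossed product lemma} and the Green--Julg theorem. The only two points that need a moment's care are (i) that the $W$-action on $C^*_r(\Gamma)$ produced by Lemma~\ref{semidirect=crossed} is precisely the one transported by Lemma~\ref{gammadual} (via the identification $\Gamma\simeq X_*(T)$ of Lemma~\ref{gamma}) to the geometric $W$-action on $T^\vee$, and (ii) that for the finite group $W$ one has $\rtimes_r=\rtimes$, so the compact-group form of Green--Julg applies verbatim. I expect (i) to be the only spot where anything actually has to be checked, and it follows at once from the equivariance clauses of the cited lemmas.
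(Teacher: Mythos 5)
Your proposal is correct and follows essentially the same route as the paper: Lemma~\ref{semidirect=crossed} to factor the group $C^*$-algebra, Lemma~\ref{gammadual} to identify $C^*_r(\Gamma)$ with $C(T^\vee)$, Lemma~\ref{crossed product lemma} for part (a), and Green--Julg for part (b). The extra care you flag about matching the $W$-actions and about $\rtimes_r=\rtimes$ for finite $W$ is sound but not belaboured in the paper's own proof.
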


\begin{proof}
By Lemma \ref{semidirect=crossed}
\begin{align*}
C^*_r(W'_a(G)) =   C^*_r(\Gamma(T) \rtimes W) \simeq C^*_r(\Gamma(T)) \rtimes_r W.
\end{align*}
This is isomorphic to $C(T^\vee)\rtimes_r W$ by Lemma \ref{gammadual}. By Lemma \ref{crossed product lemma} this is isomorphic to $(C(T^\vee)\otimes \cB(\ell^2(W)))^W$ establishing (a).

At the level of $K$-theory we have
$$KK(\C,C^*_r(W'_a))\simeq KK(\C,C(T) \rtimes W)\simeq KK_W(\C,C(T^\vee))$$
by the Green-Julg theorem \cite[Theorem 20.2.7(a)]{Black} establishing (b).
\end{proof}

Our first main result now follows.

\begin{proof}[Proof of Theorem \ref{the main theorem}]
This follows from Theorem \ref{LHS}, Remark \ref{LHS remark}, Theorem \ref{RHS} and Corollary \ref{PD}.
\end{proof}

\subsection{$K$-theory isomorphisms for affine and extended affine Weyl groups}
Recall that in Section \ref{triangle section} we considered the extended affine Weyl groups of $\PSU_3$ and its Langlands dual $\SU_3$.  The extended affine Weyl group of the latter is the affine Weyl group for both of these Lie groups.  We saw that although the two extended affine Weyl groups are non-isomorphic, their group $C^*$-algebras have the same $K$-theory.

In this section we will show that this is not a coincidence, indeed passing to the Langlands dual always rationally preserves the $K$-theory for the extended affine Weyl groups. In particular where the extended affine Weyl group of the dual of $G$ agrees with the affine Weyl group of $G$ (as for $\PSU_3$) the $K$-theory for the affine and extended affine Weyl groups of $G$ agrees up to rational isomorphism.
 
\begin{theoremn}
\dualthm
\end{theoremn}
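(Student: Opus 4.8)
The plan is to show that both sides compute, up to a rational ambiguity, the $W$-equivariant $K$-homology of the \emph{same} torus $T$, and then to invoke the fact that rationally $W$-equivariant $K$-homology and $K$-theory of a torus have the same rank. The only non-formal input will be a single application of the universal coefficient theorem; everything else is assembling results already in place.

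First I would record the two identifications furnished by the machinery already developed. By Theorem \ref{RHS}(b), $K_*(C^*_r(W_a'(G)))=KK^*(\C,C^*_r(W_a'(G)))\cong KK^*_W(\C,C(T^\vee))$, and by Corollary \ref{PD} (the Poincar\'e--Langlands duality of Section \ref{poincare section}, which is the bottom and right edges of the diagram in Theorem \ref{the main theorem}) the latter is isomorphic to the equivariant $K$-homology $KK^*_W(C(T),\C)$; hence
$$K_*(C^*_r(W_a'(G)))\;\cong\;KK^*_W(C(T),\C).$$
On the other hand, Langlands duality is involutive: dualising the root datum twice, or applying the $T$-duality equation \eqref{dualtorus2} twice, gives $(G^\vee)^\vee\cong G$ and $(T^\vee)^\vee\cong T$ as $W$-spaces, so the dual torus attached to $G^\vee$ is $T$ itself. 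Theorem \ref{RHS}(b) applied to $G^\vee$ therefore gives
$$K_*(C^*_r(W_a'(G^\vee)))\;\cong\;KK^*_W(\C,C(T)).$$
Thus the theorem is reduced to the assertion that $KK^*_W(C(T),\C)$ and $KK^*_W(\C,C(T))$ are rationally isomorphic.

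For this last point I would set $A=C(T)\rtimes_r W$. The Green--Julg theorem, in the two forms already used in the proofs of Theorems \ref{LHS} and \ref{RHS}, gives $KK^*_W(C(T),\C)\cong KK^*(A,\C)$ and $KK^*_W(\C,C(T))\cong KK^*(\C,A)=K_*(A)$. Now $A$ is a crossed product of a commutative $C^*$-algebra by a finite group, hence type I, hence in the bootstrap class, so the Rosenberg--Schochet universal coefficient theorem supplies a natural exact sequence
$$0\to \Ext^1_\Z(K_{*+1}(A),\Z)\to KK^*(A,\C)\to \Hom_\Z(K_*(A),\Z)\to 0.$$
Since $K_*(A)\cong K_*(C^*_r(W_a'(G)))$ is finitely generated, the $\Ext$-terms are finite; tensoring with $\Q$ yields degree-preserving isomorphisms $KK^i(A,\C)\otimes\Q\cong\Hom_\Q(K_i(A)\otimes\Q,\Q)$ for $i=0,1$, so $KK^*(A,\C)$ and $K_*(A)$ have the same rank in each parity, i.e.\ $KK^*(A,\C)\otimes\Q\cong K_*(A)\otimes\Q$ as $\Z/2$-graded groups. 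Transporting this back through Green--Julg gives the desired rational isomorphism $KK^*_W(C(T),\C)\cong KK^*_W(\C,C(T))$, and composing with the two displayed identifications completes the proof. Each map in the chain preserves the $\Z/2$-grading (the duality class $\fb\in KK^0_W(\C,C(T)\tensor C(T^\vee))$ is even, Green--Julg preserves degree, and the UCT relates $KK^i$ with $K_i$), so the resulting isomorphism is graded.

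The main obstacle is really just the one genuinely non-formal ingredient, namely this last rational identification: the universal coefficient theorem discards the torsion of $K_*(A)$, and in general there is no reason for the torsion subgroups of $K_*(C^*_r(W_a'(G)))$ and $K_*(C^*_r(W_a'(G^\vee)))$ to coincide — though Theorems \ref{triangle group} and \ref{cone group} exhibit $\SU_3$ and $\PSU_3$ as a case where, coincidentally, they do. I do not anticipate any serious difficulty beyond being careful that each identification in the chain is degree-preserving and that the passage $G\rightsquigarrow G^\vee$ really does send the dual torus of $G^\vee$ back to $T$.
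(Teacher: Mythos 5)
Your proof follows essentially the same chain as the paper's: Theorem \ref{RHS}(b) applied to $G$ and to $G^\vee$, the Poincar\'e--Langlands duality of Corollary \ref{PD}, and a universal coefficient theorem argument to pass rationally between the $W$-equivariant $K$-homology and $K$-theory of $T$. Your treatment of the UCT step is slightly more careful than the paper's, descending via Green--Julg to the crossed product $C(T)\rtimes_r W$, which lies in the bootstrap class because it is a type I $C^*$-algebra, whereas the paper simply invokes the UCT for $W$-equivariant $K$-theory directly without comment on why it applies.
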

\medskip

\begin{proof}
By Theorem \ref{RHS} we have an isomorphism
\begin{align}\label{K1}
K_*(C^*_r(W_a'(G)))\simeq K^W_*(C(T^\vee))=K_W^*(T^\vee).
\end{align}
Dually, applying the proposition to $G^\vee$ we have
\begin{align}\label{K2}
K_*(C^*_r(W_a'(G^\vee)))\simeq K^W_*(C(T))=K_W^*(T).
\end{align}

Now by Corollary \ref{PD} there is a Poincar\'e duality isomorphism
\begin{align}\label{K3}
K^*_W(T^{\vee}) & \simeq  K_*^W(T).
\end{align}

Applying the universal coefficient theorem, we have the exact sequence
\[
0 \to \Ext^1_{\Z}(K^{*-1}_W(T), \Z) \to K_*^W(T) \to \Hom(K^*_W(T),\Z) \to 0
\]
In particular the torsion-free part of $K_*^W(T)$ agrees with the torsion-free part of $K^*_W(T)$ therefore rationally we have
\begin{align}\label{K4}
K_*^W(T) \simeq K_W^*(T).
\end{align}

The theorem now follows by combining (\ref{K1}), (\ref{K2}), (\ref{K3}), (\ref{K4}).
\end{proof}

\begin{corollaryn}
\affinecor
\end{corollaryn}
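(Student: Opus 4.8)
The plan is to obtain the Corollary as a formal consequence of Theorem~\ref{duality}, Lemma~\ref{cover}, and the behaviour of root data under Langlands duality; no further $C^*$-algebraic work is needed.

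\emph{First assertion.} Assume $G$ is of adjoint type. In the notation of Section~\ref{compact semisimple groups}, the character lattice $\bX^*(T_\C)$ is then the root lattice of $R$, so in the dual root datum $R(G_\C,T_\C)^\vee$ the character lattice is the full dual of the coroot lattice of $R^\vee$ (that is, the coweight lattice of $R^\vee$); this is precisely the root datum of the simply-connected complex group with root system $R^\vee$. Hence $G^\vee$ is simply connected, so $\widetilde{G^\vee}=G^\vee$, the group $\pi_1(G^\vee)$ is trivial, and therefore $W_a'(G^\vee)=W_a(G^\vee)$ by Lemma~\ref{cover} (applied to $G^\vee$). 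Theorem~\ref{duality} now gives, rationally,
$$K_*(C^*_r(W_a'(G)))\cong K_*(C^*_r(W_a'(G^\vee)))=K_*(C^*_r(W_a(G^\vee))),$$
which is the first assertion.

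\emph{Second assertion.} Next I would record that the affine Weyl group of a compact connected semisimple Lie group $H$ depends only on the isomorphism type of its root system $R(H)$: by Lemma~\ref{cover} and the definition of the extended affine Weyl group, $W_a(H)=W_a'(\widetilde H)=\Gamma(\widetilde T)\rtimes W$ with $\widetilde T$ a maximal torus of $\widetilde H$, and by Lemma~\ref{gamma} the lattice $\Gamma(\widetilde T)\cong X_*(\widetilde T)$ with its $W$-action is determined by $R(H)$ (it is the coroot lattice). Applying this to $H=G^\vee$ identifies $W_a(G^\vee)$ with the affine Weyl group attached to $R(G^\vee)=R^\vee$. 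If now $G$ is moreover of type $A_n,D_n,E_6,E_7,E_8,F_4,G_2$, then $R\cong R^\vee$ as root systems (equivalently $\widetilde{G^\vee}\cong\widetilde G$), so $W_a(G^\vee)\cong W_a(G)$, and substituting this into the first assertion yields $K_*(C^*_r(W_a'(G)))\cong K_*(C^*_r(W_a(G)))$ rationally. The types $B_n$ and $C_n$ are excluded precisely because there $R\not\cong R^\vee$ for $n\ge 3$, in agreement with the table of Langlands duals in Section~\ref{langlands section}.

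I expect no genuine difficulty here: all the analytic content is already carried by Theorem~\ref{duality}. The only thing demanding a little care is the bookkeeping in the second paragraph above — matching the dual root datum of an adjoint group with that of the simply-connected form of $R^\vee$, and checking that a chosen isomorphism $R\xrightarrow{\ \sim\ }R^\vee$ intertwines the Weyl-group actions on the lattices involved — but this is standard once the dictionary between root data and the simply-connected / adjoint forms is in place.
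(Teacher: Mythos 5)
Your proof is correct and takes essentially the same approach as the paper: both reduce the Corollary to Theorem~\ref{duality} together with the observations that $G$ adjoint forces $G^\vee$ simply connected (so $W_a'(G^\vee)=W_a(G^\vee)$ by Lemma~\ref{cover}) and that the listed types have self-dual root systems (so $W_a(G^\vee)\cong W_a(G)$). Your version merely unpacks the root-datum bookkeeping that the paper leaves implicit; there is no substantive difference.
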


\begin{proof}
If $G$ is a compact connected semisimple Lie group of adjoint type then its Langlands dual $G^\vee$ is simply connected so $W_a'(G^\vee)=W_a(G^\vee)$.

In the case that $G$ is additionally of type $A_n, D_n, E_6, E_7, E_8, F_4, G_2$ the group $G^\vee$ is the universal cover of $G$ and hence $W_a(G)=W_a(G^\vee)=W_a'(G^\vee)$.
\end{proof}


\begin{thebibliography}{99}
\bibitem[BJ]{BJ}S. Baaj and P. Julg,  Th\'eorie bivariante de Kasparov et op\'erateurs non born\'es
dans les $C^*$-modules hilbertiens, C.R. Acad. Sci. Paris S\'er. I Math. 296 (1983) 875 -- 878. 
\bibitem[Black]{Black} B. Blackadar, \emph{$K$-Theory for operator algebras}  (Cambridge University Press, 2002.)
\bibitem[B]{B} N. Bourbaki, \emph{Lie groups and Lie algebras} (chapters 4-6, Springer 2002;
chapters 7-9, Springer 2008).
\bibitem[HK]{HK}N. Higson and G. Kasparov, $E$-theory and $KK$-theory for groups which act properly and isometrically on Hilbert space. Invent. Math. 144 (2001), no. 1, 23--74.
\bibitem[K]{K} G. Kasparov, Equivariant KK-theory and the Novikov conjecture Invent. math. 91 (1988) 147 -- 201.
\bibitem[S]{S} \mbox{M.S. Solleveld}, Periodic cyclic homology of affine Hecke algebras, PhD Thesis, 2007, FNWI: Korteweg-de Vries Institute for Mathematics (KdVI), http://hdl.handle.net/11245/1.271180.
\end{thebibliography}
\end{document}